\newtheorem{theorem}{Theorem}
\newtheorem{proposition}[theorem]{Proposition}
\newtheorem{lemma}[theorem]{Lemma}
\newtheorem{definition}{Definition}
\newtheorem{corollary}{Corollary}
\newtheorem{example}{Example}
\newtheorem{remark}{Remark}
\newtheorem{conjecture}{Conjecture}
\newcommand{\bR}{\mathbb{R}}
\newcommand{\bC}{\mathbb{C}}
\newcommand{\ee}{\end{equation}}
\newcommand {\al}{\alpha}
\newcommand {\la}{\lambda}
\newcommand{\be}{\beta}
\newcommand {\ga}{\gamma}
\newcommand {\Ga}{\Gamma}
\newcommand {\bCP} {\mathbb {CP}}
\newcommand \dq {\mathfrak d}
\newcommand{\D}{\mathcal D}
\newcommand{\C}{\mathcal C}
\newcommand{\N}{\mathcal N}
\newcommand \M {\mathcal {M}}
\newcommand \prt {\partial}
\begin{document}
          \numberwithin{equation}{section}

          \title[On motherbody measures with algebraic Cauchy transform]
          {On motherbody measures with algebraic Cauchy transform}

\author[R.~B\o gvad]{Rikard B\o gvad}
\address{Department of Mathematics, Stockholm University, SE-106 91
Stockholm,
         Sweden}
\email{rikard@math.su.se, shapiro@math.su.se}

\author[B.~Shapiro]{Boris Shapiro}

\dedicatory{To Harold S. Shapiro}
\date{\today}
\keywords{Algebraic functions, exactly solvable operators, motherbody measures, Cauchy transform} 
\subjclass[2010]{Primary 31A25,\; Secondary  35R30,  86A22}

\begin{abstract} Below we discuss the existence of a motherbody measure for the  exterior inverse problem in potential theory in the complex plane. More exactly, we study  the question of representability almost everywhere (a.e.) in $\bC$ of (a  branch of) an irreducible  algebraic function  as the Cauchy transform of a signed measure  supported on a finite number of compact semi-analytic curves and a finite number of isolated points.  
  Firstly,  we present a large class of  algebraic functions for which there (conjecturally) always exists a positive measure with the above properties. This class was  discovered in our earlier study 
   of exactly solvable linear differential operators. Secondly,  we  investigate in detail the representability problem  in the case when the Cauchy transform satisfies  a quadratic equation with polynomial coefficients a.e. in $\bC$.  Several conjectures and open problems are posed. 
\end{abstract}

\maketitle

\section{Introduction}  

The study of local and global properties of the Cauchy transform and the Cauchy-Stieltjes integral was initiated by A.~Cauchy and T.~Stieltjes in the middle of the 19th century. 
   Large numbers  of papers and several books are partially or completely devoted to this area 
   which  is  closely connected with the  potential theory in  the complex plane and, especially, to the  inverse  problem and to the inverse moment problem, see e.g. \cite{Be}, \cite{CMR}, 
    \cite{Ga}, 
     \cite{Mu}, \cite{Za}.

During the last  decades the notion of a motherbody of a solid domain or, more generally, of a positive Borel measure  was  discussed both in  geophysics and mathematics, see e.g. 
\cite{Sj}, \cite{SaStSha}, \cite{Gu}, \cite {Zi}. It  was apparently pioneered  in the 1960's by a Bulgarian geophysicist D.~Zidarov \cite{Zi} and later mathematically developed by B.~Gustafsson  \cite{Gu}. Although a number of  interesting results about  motherbodies was obtained  in several special cases, \cite{SaStSha}, \cite{Gu}, \cite {Zi}  there is still   no consensus  about its appropriate general  definition. In particular,   no general existence and/or uniqueness results are  known at present. 

Below we use one  possible definition of a motherbody (measure) and study  a  natural  exterior inverse motherbody problem  in the complex plane. (In what follows we will only use Borel measures.)

\medskip 
\noindent 
{\it Main problem.}  Given a  germ $f(z)={a_0}/{z}+\sum_{i\ge 2}^\infty a_i/z^i,\; a_0\in \bR$ of an algebraic (or, more generally, analytic) function near  $\infty$,     is it possible to find a compactly supported  signed  measure whose Cauchy transform  coincides with (a branch of) the analytic  continuation of $f(z)$ a.e. in $\bC$? Additionally, for which $f(z)$  it is possible to find a positive measure with the above properties?

\medskip
If such a signed measure exists and its support is a finite union of compact semi-analytic curves and isolated points we will call it {\it a real motherbody measure} of the germ $f(z)$, see  Definition~\ref{def:main} below.  If the corresponding measure is positive then we call it {\it a positive motherbody measure} of $f(z)$. 

An obvious necessary condition for the existence of  a positive motherbody measure is  $a_0>0$ since $a_0$ is the total mass. A germ (branch) of an analytic function $f(z)={a_0}/{z}+\sum_{i\geq 2}^\infty a_i/z^i$ with $a_0>0$ will be called {\it positive}. If $a_0=1$ then $f(z)$ is called a {\it probability} branch. (Necessary and sufficient conditions for the existence of a probability branch of an algebraic function are given  in Lemma~\ref{lm:ProbBr} below.)

\medskip
The formal definition of a motherbody measure that we are using is as follows. 
\begin{definition}\label{def:main}\rm{ Given a germ $f(z)={a_0}/{z}+\sum_{i\ge 2}^\infty a_i/z^i,\; a_0>0$ of an analytic function near $\infty$, we say that a signed measure $\mu_f$ is its {\it motherbody} if:}

\noindent
\rm{(i)} its support $supp(\mu_f)$ is the union of finitely many points and finitely many compact semi-analytic curves in $\bC$;

\noindent
\rm{(ii)} The Cauchy transform of $\mu_f$ coincides in each connected component of the complement $\bC\setminus supp(\mu_f)$ with some branch of the analytic continuation of $f(z)$.


\end{definition}

\noindent
{\it Remark.} 
{\rm Notice that by Theorem 1 of \cite{BBB} if the Cauchy transform of a  positive measure  coincides   a.e. in $\bC$ with an algebraic function $f(z)$ then the support of this measure is a finite union of semi-analytic curves and isolated points. Therefore it is a motherbody measure according to  the above definition.  Whether the latter result extends to signed measures is unknown at present which  motivates the following question.}

\medskip 
\noindent 
{\it Problem 1.}    Is it true that if there exists a signed measure whose Cauchy transform satisfies an irreducible algebraic equation a.e. in $\bC$  then there exists, in general, another signed  measure whose Cauchy transform satisfies a.e. in $\bC$  the same algebraic equation and whose support is a finite union of compact curves and isolated points? Does there exist such a measure with a singularity on each connected component of its support? 
 

\medskip 
Classically the inverse problem in potential theory  deals with the question of  how  to restore a solid body or a (positive) measure from the information about  its potential near infinity.  The main efforts in this inverse problem  since the foundational paper of P.~S.~Novikov \cite{No} were  concentrated around the question about the uniqueness of a (solid) body with a given exterior potential, see e.g. recent \cite{GaSj1} and \cite{GaSj2} and the references therein.   P.~S.~Novikov  (whose main mathematical contributions are in the areas of mathematical logic and group theory) proved uniqueness for convex and star-shaped bodies with a common point of star-shapeness. The question about the uniqueness of contractible domains in $\bC$ with a given sequence of holomorphic moments was later posed anew by H.~S.~Shapiro, see  Problem 1, p. 193 of \cite{BOS} and answered negatively by  M.~Sakai in \cite{Sa}. A similar non-uniqueness example for non-convex plane polygons was reported by geophysicists in \cite{BroSt}, see also \cite{PaSh}. 

\medskip 
Observe that the knowledge of  the Cauchy transform of a compactly supported (complex-valued) measure $\mu$ near $\infty$  is equivalent to the knowledge of its (holomorphic) moment sequence $\{m_k^\mu\}, k=0,1,...$ where 
$$m_k^\mu=\int_\bC z^k d\mu(z).$$
(These moments 
  are important conserved quantities in the theory of Hele-Shaw flow,   \cite{KMWWZ}.) More precisely, if $$\mathfrak u_\mu(z):=\int_\bC \ln|z-\xi|d\mu(\xi)$$ is the logarithmic potential of  $\mu$ and 
$$\C_\mu(z):=\int_\bC \frac{d\mu(\xi)}{z-\xi}=\frac{\mathfrak \prt \mathfrak u_\mu(z)}{\prt z}$$ is its Cauchy transform then the Taylor expansion of $\C_\mu(z)$  at $\infty$ has the form: 
$$\C_\mu(z)=\frac{m_0(\mu)}{z}+\frac{m_1(\mu)}{z^2}+\frac{m_2(\mu)}{z^3}+\ldots .$$

Observe also the important relations: 
$$\C_\mu=\frac{\prt u_\mu}{\prt z} \quad \text{and} \quad  
\mu=\frac{1}{\pi}\frac{\prt \C_\mu}{\prt \bar z}=\frac{1}{\pi}\frac{\prt^2 u_\mu}{\prt z \prt \bar z}$$
which should be understood as equalities of distributions. 

   \medskip 
It turns out that  the existence of a compactly supported positive measure with a given Cauchy transform $f(z)$ near $\infty$  imposes no conditions on a germ except for the obvious $a_0>0$, see Theorem~\ref{th:prop} below.  

On the other hand, the requirement that the Cauchy transform coincides with (the analytic continuation) of a given germ $f(z)$ a.e. in $\bC$  often leads to   additional restrictions on the germ $f(z)$ which are not  easy to describe in terms of the defining algebraic equation, see \S~\ref{sq-roots}.  

Below we study two  classes of algebraic functions  of very different origin and our results for these two cases are very different as well. For the first class the obvious necessary condition $a_0=1$ seems to be  sufficient for the existence of a positive motherbody measure. (At present we can prove this fact only under certain additional restrictions.) For the second class the set of admissible germs has a quite complicated structure. These results together with a number of conjectures  seem to indicate that  it is  quite difficult, in general, to answer when a given algebraic germ $f(z)$ admits  a motherbody measure and if it does then how many. 

Several concluding  remarks are in place here.  
Our interest in  probability measures  whose  Cauchy transforms are algebraic functions a.e. in $\bC$  was sparked by the pioneering work \cite{BR}. Since then the class of interesting examples where such situation occurs has been substantially broadened, see  \cite{BBS}, \cite{HS1}, \cite{STT}.  
Some general local results when one considers a collection of locally analytic functions instead of a global algebraic function were obtained in \cite{BB} and later extended in  \cite {BBB}.  

\medskip
\noindent 
{\bf Acknowledgements.}  The authors are obliged to   T.~Bergkvist, J.~E.~Bj\"ork, late J.~Borcea, B.~Gustafsson,  A.~Mart\'inez-Finkelshtein, and H.~Rullg\aa rd for numerous discussions of the topic over the years. Studies of quadrature domains by H.~S.~Shapiro were in many ways an important source of inspiration for the present project.  It is our pleasure  to acknowledge this influence.





\section{Some general facts}\label{gen:facts}

The first  rather simple result of the present paper  (which apparently is  known to the specialists)  is as follows. 

\begin{theorem}\label{th:prop} Given an arbitrary germ $f(z)= {a_0}/{z}+\sum_{i\ge 2}^\infty a_i/z^i,\; a_0>0$ of an  analytic function near  $\infty$ there exist   (families of) positive compactly supported in $\bC$ measures  whose Cauchy transform   near $\infty$ coincides with $f(z)$. 
\end{theorem}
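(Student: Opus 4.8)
The plan is to construct the required measures explicitly by a convolution/smoothing procedure, reducing the problem to the elementary fact that the Cauchy transform of a suitable radially symmetric probability measure is $a_0/z$ near infinity. First I would observe that it suffices to handle the case $a_0 = 1$, since multiplying a measure by the positive constant $a_0$ scales its Cauchy transform by $a_0$ and preserves positivity. So let $f(z) = 1/z + \sum_{i\ge 2} a_i/z^i$ be a probability branch, convergent in $\{|z| > R\}$ for some $R>0$.

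The key step is to realize $f$ as a Cauchy transform on the annular/disk region where it converges. Consider the circle $|z| = \rho$ for $\rho > R$. On this circle $f$ is analytic, and I would define a measure on the disk $\{|z|\le \rho\}$ whose Cauchy transform agrees with $f$ outside that disk. Concretely, write the jump/boundary data: if one seeks a measure $\mu$ supported in $\overline{\mathbb D_\rho}$ with $\C_\mu = f$ for $|z|>\rho$, then $\mu$ is forced by $\mu = \frac1\pi \prt_{\bar z}\C_\mu$, where $\C_\mu$ is $f$ outside the disk and must be chosen as some analytic completion inside. The cleanest route: let $F(z)$ be \emph{any} function on $\overline{\mathbb D_\rho}$, smooth up to the boundary, agreeing with $f$ and its normal derivative on $|z|=\rho$ — for instance extend the boundary values of $f$ harmonically or just take a smooth cutoff interpolation — and set $\mu = \frac1\pi \prt_{\bar z} G$ where $G$ equals $f$ for $|z|>\rho$ and $F$ for $|z|\le\rho$. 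By construction $\C_\mu = G$, so $\C_\mu = f$ near $\infty$, and $\mu$ is compactly supported. The freedom in choosing $F$ produces the promised \emph{families} of measures.

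The main obstacle is \textbf{positivity}: the measure $\mu$ just built is a priori only a signed (indeed complex) measure, and $\frac1\pi\prt_{\bar z}G$ need not be a positive density. To fix this I would not try to make the above $\mu$ positive directly; instead I would use a different, genuinely robust construction. Fix a radial probability measure $\nu_0$ on a small disk $\mathbb D_\eps$ (e.g. normalized area measure, or a point mass at $0$), whose Cauchy transform is exactly $1/z$ for $|z|>\eps$ by the mean value property. Now I want to perturb $\nu_0$ to absorb the higher moments $a_i$. The point is that the moment map $\mu \mapsto (m_0,m_1,m_2,\dots)$ from positive measures on a fixed compact disk $\overline{\mathbb D_R}$ to sequences is surjective onto an open set around the moment sequence of $\nu_0$ — more precisely, adding to $\nu_0$ small multiples of positive measures supported near finitely many generic points $w_1,\dots,w_N \in \mathbb D_R$ lets one independently adjust $m_1,\dots,m_N$ while keeping the measure positive, because the Vandermonde-type matrix $(w_j^k)$ is invertible. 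Matching moments one order at a time (a convergent iterative scheme, using that the contribution of a point mass at $w$ to $\C$ near $\infty$ is $\sum_k w^k/z^{k+1}$) yields a positive measure with Cauchy transform $f$ to all orders; since $f$ is analytic near $\infty$ and the constructed measure is compactly supported, the two Cauchy transforms, having the same Taylor expansion at $\infty$, coincide there. Carrying out the iteration so that it converges while preserving positivity — controlling the sizes of the successive corrections against the mass already present — is the technical heart of the argument; I expect one can instead streamline this by a single smoothing convolution $\mu = \nu_0 * (\text{a compactly supported smooth positive bump encoding the tail})$ and then verify positivity by taking $\nu_0$ with enough mass, but the moment-matching version is the one I would write out in detail.
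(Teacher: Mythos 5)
Your second construction --- the only one that addresses positivity, which is the entire content of the theorem --- has a genuine gap at precisely the point you label ``the technical heart''. Two concrete problems. First, invertibility of the Vandermonde-type matrix $(w_j^k)$ only yields a correction with \emph{arbitrary complex} coefficients; to keep the measure positive you need the required adjustment of $(m_1,\dots,m_N)\in\bC^N\cong\bR^{2N}$ to lie in the \emph{positive cone} spanned by the vectors $(w_j,w_j^2,\dots,w_j^N)$, which requires at least $2N+1$ points in strong general position and a separate argument that this cone is all of $\bR^{2N}$; ``the Vandermonde matrix is invertible'' does not deliver this. Second, and more seriously, every point mass added to correct the $N$-th moment perturbs \emph{all} other moments, so matching the full infinite sequence $a_2,a_3,\dots$ forces an infinite iteration in which one must simultaneously control convergence of the scheme, nonnegativity of all accumulated masses, and convergence of every moment to its target; none of this is carried out, and it is not routine (the correction needed at stage $N$ is comparable to $|a_{N+1}|\sim R^{N}$ against point masses contributing $|w_j|^{N}$, so the support and the mass budget must be managed together). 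Your first construction, $\mu=\frac1\pi\prt_{\bar z}G$ for a smooth interior extension $G$ of $f$, is fine as far as it goes but produces only a complex measure, so it does not prove the statement.

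For comparison, the paper sidesteps the moment problem entirely: it takes a germ $h$ of the logarithmic potential with $\prt h/\prt z=f$ and $h=a_0\log|z|+\dots$, observes that for large $v$ the level set $\{h=v\}$ has a simple closed component $\gamma_v$ encircling $\infty$, and takes the equilibrium measure of mass $a_0$ of the complement of the component of $\{h>v\}$ containing $\infty$. By Frostman's theorem this measure sits on $\gamma_v$ with constant potential there and harmonic potential off its support, and uniqueness of the harmonic function with the prescribed logarithmic pole and constant boundary values forces its potential to agree with $h$ (up to an additive constant) near $\infty$; positivity is automatic, and varying $v$ gives the family. The object your iteration is implicitly trying to build is $\frac1{4\pi}\Delta\max(h,v)$, whose distributional Laplacian is exactly this positive measure on $\gamma_v$; if you want a constructive proof along your lines, that is the shortcut to take rather than moment matching.
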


\begin{proof}[Proof of Theorem~\ref{th:prop}] Given a branch $f(z)=a_0/z+\sum_{i\ge 2}a_i/z^i$ of an analytic function near $\infty$ we first take a germ of its 'logarithmic potential',   i.e. a germ  $h(z)$ of  harmonic function such that  $h(z)=a_0\log |z| +\dots$ satisfying the relation 
$\partial h/\partial z=f(z)$ in a punctured neighborhood of $\infty$ where $\dots$ stands for a germ of harmonic function near $\infty$.   For any sufficiently large positive $v$ the  connected component $\gamma_v$ of the level curve $ h(z)=v$  near infinity is  closed and simple. It makes one turn around $\infty$. To get  a required positive measure whose Cauchy transform coincides with $f(z)$ near $\infty$ take  $\gamma_v$ for any $v$ large enough and consider the complement $\Omega_v=\bC P^1\setminus Int(\gamma_v)$ where  $Int(\gamma_v)$ is the interior of $\gamma_v$ containing $\infty$. Consider the equilibrium measure of mass $a_0$ supported on $\Omega_v$. By Frostman's theorem,   this measure is in fact supported on $\gamma_v$ (since $\gamma_v$ has no polar subsets), its potential is constant on $\gamma_v$ and this potential  is harmonic in the complement to the support. (For definition and properties of equilibrium measures as well as Frostman's theorem\footnote{Otto Frostman has spent a  substantial part of his professional life at the same department in Stockholm where we are currently employed.} consult  \cite{Rans}.) Thus it should coincide with $h(z)$ in $Int(\gamma_v)$ since the total mass is correctly chosen. Then its Cauchy transform coincides with $f(z)$ in $Int(\gamma_v)$.    
\end{proof}
 
\begin{example} {\rm If we choose  $f(z)=1/z$ as the branch at $\infty$, then $h(z)=\log \vert z\vert $ and $\gamma_v$ is the circle $\vert z\vert = e^v$. The equilibrium measure in this case is the uniform probability  measure  on this circle, and its Cauchy transform $C(z)$ will be $0$ inside the circle and $1/z$ outside. Since $0$ is not the analytic continuation of $1/z$, the uniform measure on the circle is not a motherbody measure. However the unit point mass $\mu$ at the origin, will have Cauchy transform $C(\mu)=1/z$ in $\bC\setminus 0$ and therefore will be  a motherbody measure for the germ $1/z$ at $\infty$.}
\end{example}

We now give a necessary condition and a slightly stronger sufficient condition for an algebraic curve given by the equation 
\begin{equation}\label{eq:main}
P(\C,z)=\sum_{(i,j)\in S(P)}\al_{i,j}\C^iz^j=0
\end{equation}  
to have a probability branch at $\infty$. Here every $\al_{i,j}\neq 0$ and $S(P)$ is an arbitrary finite subset of pairs of non-negative integers, i.e an arbitrary set of monomials in $2$ variables. In other words,   $S(P)$ is the set of all  monomials appearing in $P$ with non-vanishing coefficients. 
(In what follows $\C$  stands for the variable corresponding to the Cauchy  transform.) 

\begin{lemma}\label{lm:ProbBr} If the algebraic curve given by  equation~\eqref{eq:main}  has a probability branch at $\infty$ then
\begin{equation}
\label{eq:diag}
\sum_{i} \al_{i,i-M(P)}=0\quad\text{ where}\quad M(P)=\min_{(i,j)\in S(P)} i-j.
\end{equation}

In particular, there should be at least two distinct monomials in $S(P)$ whose difference of indices $i-j$ equals $M(P)$.

If equation \eqref{eq:diag} is satisfied, and additionally 
\begin{equation}
\label{eq:branchpoint}\sum_{i}i \al_{i,i-M(P)}\neq 0,
\end{equation}
then there is a unique probability branch at $\infty$ satisfying equation~\eqref{eq:main}.

\end{lemma}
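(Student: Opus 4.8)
The plan is to pass to the local coordinate $w=1/z$ at $\infty$ and substitute a normalized ansatz for the branch that exposes its leading asymptotics. For the necessity of \eqref{eq:diag}, suppose $f(z)$ is a probability branch with $P(f(z),z)\equiv 0$ near $\infty$. In the variable $w=1/z$ we have $f=w+a_2w^2+a_3w^3+\cdots$, so $f=w\,v(w)$ for a germ $v$ holomorphic at $w=0$ with $v(0)=1$. Substituting $\mathcal C=wv$, $z=1/w$, each monomial becomes $\alpha_{i,j}\mathcal C^iz^j=\alpha_{i,j}v^iw^{i-j}$, whence
\[
P(f(z),z)=\sum_{(i,j)\in S(P)}\alpha_{i,j}v^iw^{i-j}=w^{M(P)}Q(v,w),\qquad Q(v,w):=\sum_{(i,j)\in S(P)}\alpha_{i,j}v^iw^{i-j-M(P)}.
\]
By the definition of $M(P)$ every exponent $i-j-M(P)$ is a nonnegative integer, so $Q$ is a polynomial, in particular holomorphic at $(v,w)=(1,0)$. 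Since $w^{M(P)}\neq 0$ for $w\neq 0$, the identity $P(f,z)\equiv 0$ forces $Q(v(w),w)\equiv 0$; evaluating at $w=0$ and using $v(0)=1$ gives $Q(1,0)=0$, that is $\sum_i\alpha_{i,i-M(P)}=0$, which is \eqref{eq:diag}. Because every $\alpha_{i,i-M(P)}\neq 0$, this sum cannot consist of a single term, so at least two monomials of $P$ satisfy $i-j=M(P)$.

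For the sufficiency and uniqueness statement, assume \eqref{eq:diag} and \eqref{eq:branchpoint} and keep the polynomial $Q$ above. Set $R(v):=Q(v,0)=\sum_i\alpha_{i,i-M(P)}v^i$. Then \eqref{eq:diag} says $R(1)=0$ while \eqref{eq:branchpoint} says $R'(1)=\sum_i i\,\alpha_{i,i-M(P)}\neq 0$, i.e.\ $\partial Q/\partial v\neq 0$ at $(1,0)$. The holomorphic implicit function theorem applied to $Q$ at $(1,0)$ then furnishes a unique germ $v(w)$, holomorphic at $w=0$ with $v(0)=1$, solving $Q(v(w),w)\equiv 0$. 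Putting $\mathcal C(z):=\tfrac1z v(\tfrac1z)=\tfrac1z+O(\tfrac1{z^2})$ we obtain a probability branch, and by the factorization $P(\mathcal C,z)=w^{M(P)}Q(v,w)$ it satisfies \eqref{eq:main} near $\infty$, proving existence. For uniqueness, any probability branch is of the form $w\,v(w)$ with $v$ holomorphic, $v(0)=1$, and $Q(v(w),w)\equiv 0$, hence must coincide with the germ produced by the implicit function theorem.

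The argument is mostly bookkeeping of exponents; the two points deserving care are that $Q$ genuinely contains no negative powers of $w$ — which is exactly why $M(P)$ is taken to be the \emph{minimum} of $i-j$, so that this Newton-polygon edge governs the branch tending to $0$ at $\infty$ — and that the simple-root hypothesis \eqref{eq:branchpoint} is precisely what makes the solution single-valued at $w=0$, so that no probability branch is overlooked and none is spurious. I do not anticipate a genuine obstacle beyond this.
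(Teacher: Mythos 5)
Your proof is correct and follows essentially the same route as the paper's: substitute $w=1/z$, factor out $w^{M(P)}$ to obtain the polynomial $Q(v,w)$ (which is exactly the paper's $\tilde P(\D,w)$ with $\D=v$), and read off \eqref{eq:diag} from $Q(1,0)=0$. The only difference is in the sufficiency step, where you invoke the holomorphic implicit function theorem at $(1,0)$ while the paper determines the coefficients $a_{r+2}$ by an explicit induction and then appeals to algebraicity of $P$ to pass from the formal series to an actual branch; your packaging is marginally cleaner since the implicit function theorem delivers convergence and uniqueness directly.
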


\begin{proof}
Substituting $w=1/z$ in \eqref{eq:main} we get 
$P(\C,w)=\sum_{(i,j)\in S(P)}\al_{i,j}\C^i/w^j.$ Assuming that the algebraic curve given by $P(\C,w)=0$ has a branch 
$$\C=w+\sum_{l=2}^\infty a_lw^l$$
 where $a_l,\;l=2,3,...$ are undetermined coefficients, we substitute the latter expression in the above equation and get 
\begin{equation} \label{eq:rec}
\sum_{(i,j)\in S(P)}\al_{i,j}(w+\sum_{l=2}^\infty a_lw^l)^i/w^j=0.
\end{equation}
Collecting the entries of the minimal degree which equals $M(P)=\min_{(i,j)\in S(P)}i-j$,  we obtain an obvious necessary condition for the solvability of \eqref{eq:rec}   given by 
\begin{equation}
\label{eq: firststep}
\sum\al_{i, i-M(P)}=0.
\end{equation}

Let us now show that   \eqref{eq:diag}, together with \eqref{eq:branchpoint},  are sufficient for the solvability of \eqref{eq:rec} with respect to the sequence of coefficients $a_2, a_3,\dots$. 
Indeed, due to algebraicity of  $P(\C,z)$ it suffices to prove the formal solvability of \eqref{eq:rec}. Let $\C=\D w$ and rewrite the equation as 
$$
\tilde P(\D,w)=\sum_{(i,j)\in S(P)}\al_{i,j}\D^iw^{i-j-M(P)}=0,
$$
which is now a polynomial in $w$ and $\D$. Assume that $d_r=1+a_2w+...+a_{r+1}w^r$ satisfies 
\begin{equation}
\label{eq:inductionstep}
\tilde P(d_r,w)\equiv 0\ \text{mod } z^{r+1}.
\end{equation}
The fact that this equation holds  for $d_0=1$ is exactly the relation (\ref{eq: firststep}) which gives the basis of the inductive construction of $d_1,d_2,...$, as follows. Letting $\tilde P'_1(\C,w)$ be the partial derivative of $\tilde P(\C,w)$ with respect to the first variable, we have the following relation for the undetermined coefficient $a_{r+2}$:
\begin{eqnarray*}
\tilde P(d_r+a_{r+2}w^{r+1},w)\equiv \tilde P(d_r,w)+\tilde P'_1(d_r,w)a_{r+2}w^{r+1}\\
\equiv
\tilde P(d_r,w)+\tilde P'_1(1,0)a_{r+2}w^{r+1} \text{  mod } w^{r+2}.
\end{eqnarray*}
Since we have assumed that $\tilde P'_1(1,0)=\sum_{i}i \al_{i,i-M(P)}$ is non-zero, and that, by the induction assumption,  $\tilde P(d_r,w)\equiv bw^{r+1} \text{  mod } w^{r+2},\ b\in \mathbb C$,  we can solve the latter equation for $a_{r+2}$. Thus, by induction, we have proven that there is a formal series solution of \eqref{eq:rec}. Therefore  conditions \eqref{eq:diag}  and  \eqref{eq:branchpoint} are sufficient for the existence of a probability branch which is also unique in this case. 
\end{proof}

\noindent
{\it Remark.} 
Note that for an irreducible algebraic curve defined by \eqref{eq:main} the second condition  (\ref{eq:branchpoint})  in the lemma says that $z=\infty$ is not its branching point.  It is clearly possible, though cumbersome, to give necessary and sufficient  conditions for the existence of a probability branch in terms of algebraic relations between the coefficients of the equation.  An example of an equation that does not satisfy both  conditions in the lemma, but still has a probability branch as a solution,  is $P(\C,z)=(\C z-1)^2$. 

\begin{figure}

\begin{center}
\includegraphics[scale=0.4]{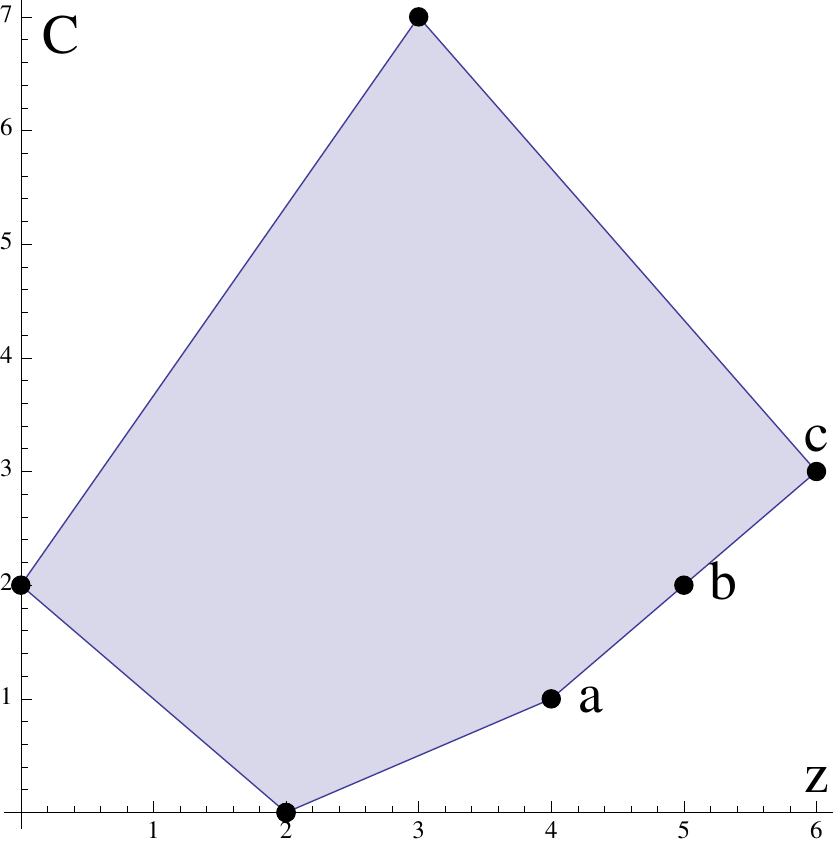} \quad \quad \includegraphics [scale=0.4]{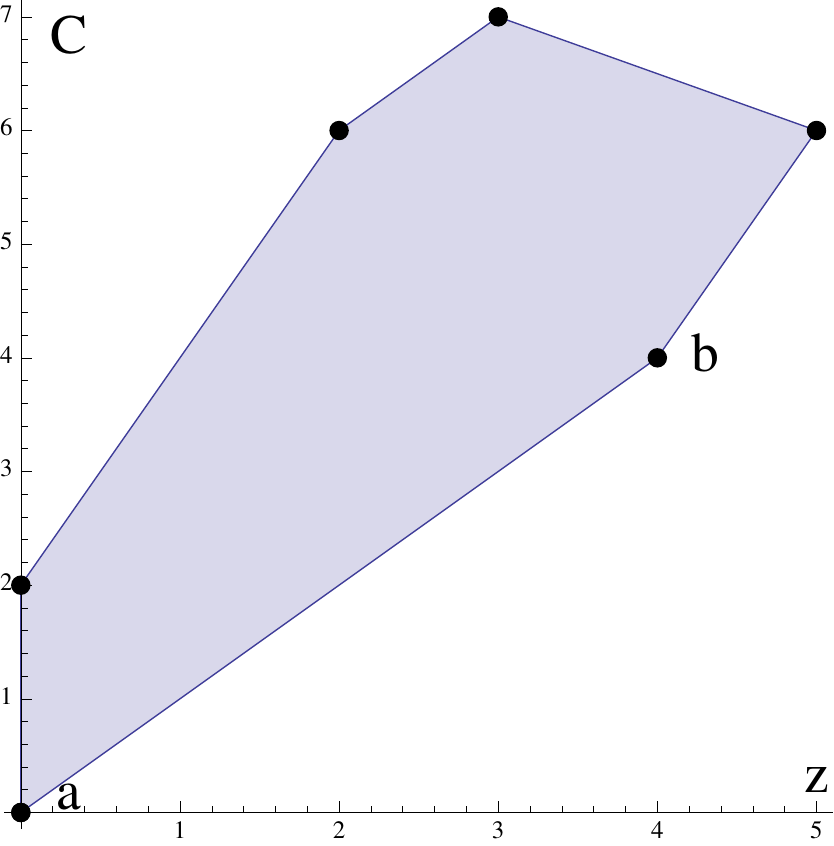}  \quad \quad \includegraphics [scale=0.4]{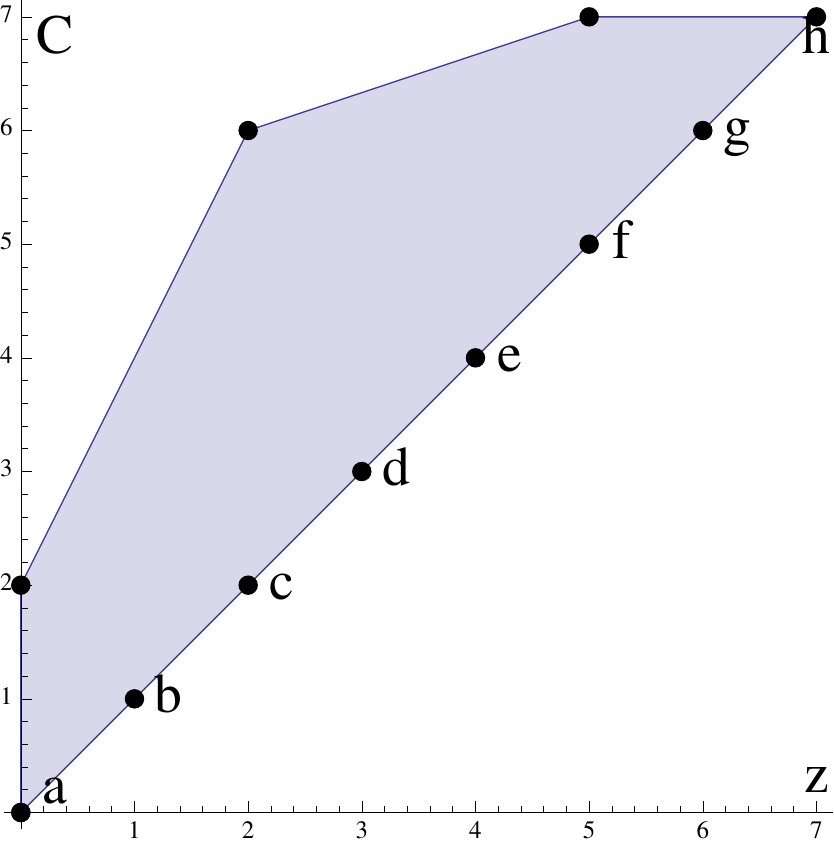}
\end{center}


\caption{Three examples of  Newton polygons.}
\label{fig1}
\end{figure}

In Fig.~1   the  necessary condition of Lemma~\ref{lm:ProbBr} for the polynomials with Newton polygons shown on each of three pictures is   that the sum of the coefficients at the monomials labelled by the letters a, b,... vanishes. 

Since we are working with irreducible algebraic curves (obtained as the analytic continuation of  given branches at $\infty$)  we will need the following  statement. Let $S$ be an arbitrary set of monomials in $2$ variables. Denote by  $Pol_S$ the linear span of these monomials with complex coefficients, and denote by $\N_S$  the {\it Newton polygon} of $S$, i.e. its convex hull.   

\begin{lemma}\label{lm:irred}  A generic polynomial in  $Pol_S$ is irreducible if and only if: 

\medskip
\noindent
\rm{(i)} $\N_S$ is two-dimensional, i.e. not all monomials in $S$ lie on the same affine line;
 
\medskip
 \noindent
\rm{(ii)} $S$ contains a vertex on both coordinate axes, i.e. pure powers of both variables.  
\end{lemma}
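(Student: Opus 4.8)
The plan is to establish both directions by fairly standard Newton-polygon/factorization arguments, keeping in mind that ``generic'' means: outside a proper Zariski-closed subset of the coefficient space $Pol_S$.

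First the necessity of (i) and (ii). If $\N_S$ is one-dimensional, every monomial in $S$ lies on an affine line $ai+bj=c$ with $(a,b)$ a primitive integer vector; after clearing denominators one sees every $P\in Pol_S$ is (a monomial times) a univariate polynomial in the single monomial $u=\C^{a'}z^{b'}$ that generates the lattice points on that line, and a univariate polynomial of degree $\ge 2$ over $\bC$ always factors. If $S$ contains no pure power of $\C$, i.e. $i\ge 1$ for all $(i,j)\in S$, then $\C$ divides every $P\in Pol_S$ and $P$ is reducible (unless $P$ is itself a unit times $\C$, which is not generic once $\N_S$ is two-dimensional); symmetrically for $z$. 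So I would first dispose of these degenerate shapes of $S$, after which I may assume $\N_S$ is a genuine polygon touching both axes.

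Now the sufficiency. Here the main tool is the behaviour of the Newton polygon under multiplication: if $P=QR$ then $\N_S=\N_{S(Q)}+\N_{S(R)}$ (Minkowski sum), by the Ostrowski theorem on Newton polytopes of products. Condition (ii) forces $\N_S$ to meet the $\C$-axis at a point $(m,0)$ and the $z$-axis at a point $(0,n)$ with $m,n\ge 1$. In any factorization $P=QR$ the summand $\N_{S(Q)}$ must contain the origin-adjacent edge structure, but more usefully I would argue on a specific edge: restricting $P$ to the edge of $\N_S$ joining $(m,0)$ to an adjacent vertex gives an edge polynomial, and genericity of the coefficients lets me choose it so that its one-variable reduction is separable with no repeated roots, forcing any Minkowski decomposition of that edge to be trivial on one side. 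The cleanest route, though, is this: pick a generic line direction and consider the generic fiber of $P$ as a polynomial in $\C$ with coefficients in $\bC(z)$; condition (ii), the presence of a pure power $z^n$, guarantees the leading coefficient (in $\C$) is a nonzero constant while the constant term (in $\C$) is a nonconstant polynomial in $z$ of the right degree, and then a Gauss-lemma/Eisenstein-type argument at a suitable place of $\bC(z)$ — or simply the observation that for generic coefficients the discriminant in $\C$ is a nonzero polynomial in $z$, so the curve is smooth over a generic $z$ and the monodromy of the covering $\C\mapsto z$ is transitive — yields irreducibility. Transitivity of monodromy is where condition (i) re-enters: a two-dimensional $\N_S$ ensures the projection to the $z$-line is a nonconstant cover with genuinely varying fiber, so the branch points produce a transitive monodromy group for generic coefficients, hence an irreducible curve.

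The main obstacle I anticipate is making the genericity quantitative in a clean way: one must exhibit at least one $P_0\in Pol_S$ (with the given $S$, all listed monomials present) that is irreducible, since irreducibility is a Zariski-open condition and a single witness then gives it on a dense open set. Producing that witness is exactly where the hypotheses (i) and (ii) must be used essentially — for instance by building $P_0$ so that its Newton polygon has an edge whose edge-polynomial is irreducible over $\bC[z,z^{-1}]$ (possible because the edge touches a coordinate axis, so the edge-polynomial is a genuine one-variable polynomial that we may take to be, say, $\C^k - z$ up to the lattice generator), and invoking the fact that a polynomial whose Newton polygon admits no nontrivial Minkowski summand compatible with such an edge cannot factor nontrivially. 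I would present the argument via this Minkowski-sum obstruction for the edge adjacent to a coordinate vertex, as that isolates precisely the role of (i) (the polygon is two-dimensional, so edges exist and are not the whole polygon) and (ii) (the relevant vertices lie on the axes, making the edge-polynomials honest univariate polynomials that can be chosen irreducible).
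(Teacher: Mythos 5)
Your overall skeleton is the right one and matches the paper's: the necessity of (i) and (ii) is handled exactly as in the paper (collinear exponents give a univariate polynomial after a monomial substitution; missing axis monomials give a common factor of $\C$ or $z$), and your observation that irreducibility is Zariski-open in $Pol_S$, so that a single irreducible witness suffices, is precisely the ``inheritance'' principle the paper states at the start of its proof. The difference is that the paper then actually produces the witness, and in a much more economical way than anything you sketch: it selects from $S$ three non-collinear monomials including the required pure powers (splitting into three cases according to which axis monomials are available) and invokes the elementary fact that a generic trinomial curve with a two-dimensional exponent triangle touching the axes is irreducible. Your proposal, by contrast, stops at a menu of candidate constructions and never completes any of them, and the sufficiency direction is the entire content of the lemma.

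Two of your candidate routes have concrete defects. First, the claim that condition (ii), ``the presence of a pure power $z^n$, guarantees the leading coefficient (in $\C$) is a nonzero constant'' is false: writing $P=\sum_i c_i(z)\C^i$, the monomial $z^n$ contributes to the constant term $c_0(z)$, not to the top coefficient $c_d(z)$, and $c_d(z)$ is generically a nonconstant polynomial as soon as $S$ contains any $(d,j)$ with $j>0$; moreover an Eisenstein-type argument at a root of $c_0$ fails for generic coefficients since the intermediate $c_i$ will not vanish there. Second, the monodromy route is circular as stated: transitivity of the monodromy of the projection $\pi_z$ on a smooth fibre is \emph{equivalent} to irreducibility of the curve, so asserting that ``the branch points produce a transitive monodromy group for generic coefficients'' is exactly the statement to be proved, not a tool for proving it. The Minkowski-sum/edge-polynomial route is salvageable in principle (an edge touching a coordinate vertex with an irreducible edge polynomial does force one summand of that edge to be a point in any factorization $\N_{S(Q)}+\N_{S(R)}$), but as written it constrains only one edge and does not exclude factorizations in which one factor's polytope degenerates along that edge; you would still need a global argument. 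The shortest repair is to abandon these and, as the paper does, take the witness to be a generic trinomial $\al\C^i+\be\C^jz^k+\ga z^l$ (or $A+B\C^i+C\C^lz^k$, etc.) built from monomials of $S$ guaranteed by (i) and (ii), and verify its irreducibility directly.
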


Notice that (ii) is already satisfied if $S$ contains the origin (i.e. polynomials in $Pol_S$ might have a non-vanishing constant term).

\begin{proof}
Observe  that the property that a generic polynomial from $Pol_S$ is irreducible is inherited, i.e. if $S$ contains a proper subset $S'$ with the same property then it automatically holds for $S$. The necessity of both conditions (i) and (ii) is obvious. If (i) is violated then  any polynomial in $Pol_S$ can be represented as a polynomial in one variable after an appropriate change of variables. If (ii) is violated that any polynomial in $Pol_S$ is divisible by a (power of a) variable.  

To prove sufficiency we have to consider several cases. If $S$ contains $\C^i$ and $z^j$ where both $i$ and $j$ are positive then already a generic curve of the form $\al\C^i+\be \C^jz^k +\ga z^l=0$ is irreducible unless $\C^i$,  $\C^jz^k$ and $z^l$ lie on the same line. If $S$ contains $\C^i$ and $1$ for some positive $i$ and no other pure powers of $z$ (or, similarly, $z^j$ and $1$ for some positive $k$ and no other pure powers of $\C$) then $S$ should contain another monomial $\C^lz^k$ with $l$ and $k$ positive.  But then a generic curve of the form $A+B\C^i+C\C^lz^k=0$ is irreducible. Finally, if the only monomial on the axes is $1$ but there exists two monomials $\C^{i_1}z^{j_1}$  and $\C^{i_2} z^{j_2}$ with $i_1/j_1\neq i_2/j_2$ then  
a generic curve of the form $A+B\C^{i_1}z^{j_1}+C\C^{i_2}z^{j_2}=0$ is irreducible. 
\end{proof}

\begin{corollary}\label{cor:irredpos} An irreducible polynomial $P(\C,z)$ having a probability branch has a non-negative   $M(P)=\min_{(i,j)\in S(P)} i-j$. If $M(P)=0$ then the set $S(P)$ of monomials of $P(\C,z)$ must contain the origin, i.e.,  a non-vanishing constant term. 
\end{corollary}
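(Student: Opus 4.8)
The plan is to obtain both assertions by locating the extreme monomials of $P$ on the Newton polygon $\N_{S(P)}$ relative to the main diagonal $\{i=j\}$, using Lemma~\ref{lm:irred} for the combinatorics and Lemma~\ref{lm:ProbBr} for the analytic input. First I would record two structural facts. Since $P$ is irreducible and carries a probability branch, it is not a monomial, so the necessity of condition~(ii) in Lemma~\ref{lm:irred} (if it fails, every polynomial in the span is divisible by a power of a variable) forces $S(P)$ to contain a vertex $(n,0)$ on the $\C$-axis, where $n=\deg_\C P\ge 1$, together with a vertex $(0,m)$ on the $z$-axis for some $m\ge 0$. On the analytic side, Lemma~\ref{lm:ProbBr} applied to the probability branch yields relation~\eqref{eq:diag}, so the supporting line $L=\{(i,j):i-j=M(P)\}$ carries at least two monomials of $S(P)$ and is therefore the extreme up-left edge of $\N_{S(P)}$.

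The $z$-axis vertex already gives the easy bound: since $(0,m)$ contributes the value $i-j=-m$, minimality yields $M(P)\le -m\le 0$. Thus the real content is the matching lower bound $M(P)\ge 0$, equivalently that no monomial lies strictly below the main diagonal, and this is where the probability branch must be used. To expose the mechanism I would rewrite~\eqref{eq:main} in the coordinate $u=\C z$ adapted to the branch: substituting $\C=u/z$ turns it into $\sum_{(i,j)\in S(P)}\al_{i,j}\,u^{\,i}z^{\,j-i}=0$, and the probability branch $\C\sim 1/z$ becomes the normalization $u\to 1$ as $z\to\infty$. Here the top power of $z$ present is $-M(P)$, carried exactly by the monomials on $L$, and their vanishing at $u=1$ is precisely~\eqref{eq:diag}. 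I would then argue that if $M(P)<0$ the edge $L$ lies entirely in the region $j>i$, and track how its lower-right endpoint (a vertex whose $z$-degree exceeds its $\C$-degree) must, in the presence of the $\C$-axis vertex $(n,0)$, interact with the remaining branches at $\infty$; the aim is to show that the decaying normalization $a_0=1$ cannot be propagated consistently together with irreducibility unless $L$ already reaches the diagonal.

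I expect this last step to be the main obstacle. The purely combinatorial data — the two axis vertices and the edge $L$ — give only the upper estimate $M(P)\le -m\le 0$, and they do not by themselves pin down the sign in the other direction; the lower bound genuinely requires feeding the analytic normalization of the branch back into the global irreducibility structure, rather than reading it off the polygon. I would therefore spend the bulk of the argument on showing that a below-diagonal endpoint of $L$ is incompatible with $P$ being simultaneously irreducible and carrying a branch normalized by $a_0=1$, which is the crux of $M(P)\ge 0$.

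Granting $M(P)\ge 0$, the second assertion is immediate and clean. If $M(P)=0$ then every $(i,j)\in S(P)$ satisfies $i-j\ge 0$; in particular the $z$-axis vertex $(0,m)$ obeys $-m=i-j\ge 0$, which forces $m=0$ and hence $(0,0)\in S(P)$, i.e. $P$ has a non-vanishing constant term. In this way the whole corollary reduces to the single inequality $M(P)\ge 0$, and the combinatorial bookkeeping around the axis vertices then delivers the statement about the constant term with no further work.
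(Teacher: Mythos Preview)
Your attempt cannot be completed, because the step you single out as the crux—proving $M(P)\ge 0$—is false. You yourself derive $M(P)\le -m\le 0$ from the $z$-axis monomial $(0,m)$; if $M(P)\ge 0$ held as well, every irreducible polynomial with a probability branch would be balanced. But take $P(\C,z)=\C z^{2}-z-1$: it is irreducible (linear in $\C$ with coprime coefficients), its unique branch $\C=(z+1)/z^{2}=1/z+1/z^{2}$ is a probability branch, and $S(P)=\{(1,2),(0,1),(0,0)\}$ gives $M(P)=\min(-1,-1,0)=-1$. The entire quadratic family of \S\ref{sq-roots} likewise has $M(P)=-n<0$ for $n\ge 1$. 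So no argument ``feeding the analytic normalization $a_0=1$ back into irreducibility'' will ever produce $M(P)\ge 0$; the conditional phrasing ``If $M(P)=0$'' and the paper's own reference to the case $M(P)>0$ just before \S\ref{sq-roots} should already have made you suspicious.

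The corollary as printed evidently contains a sign slip: ``non-negative'' should read ``non-positive'' (this is also what the stated geometric paraphrase—$\N_P$ meets the bisector—actually encodes, once combined with the $\C$-axis vertex). With that correction the argument is immediate and you have already written it. Irreducibility together with the existence of a probability branch rules out $P=c\,\C$, hence $\C\nmid P$ and some $(0,m)\in S(P)$; then $M(P)\le -m\le 0$. If $M(P)=0$ then every $(i,j)\in S(P)$ has $i\ge j$, so in particular $0\ge m$, forcing $m=0$ and $(0,0)\in S(P)$. The paper gives no separate proof, treating this as an immediate corollary of Lemmas~\ref{lm:ProbBr} and~\ref{lm:irred}; the reasoning above is the intended one, and Lemma~\ref{lm:ProbBr} is in fact not needed beyond guaranteeing that the branch is non-trivial.
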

 
If we denote by $\N_P=\N_{S(P)}$ the Newton polygon of $P(\C,z)$ then the  geometric interpretation of the latter corollary is that $\N_P$ should (non-strictly) intersect the bisector of the first quadrant in the plane $(z,\C)$.

\medskip
The case $M(P)=0$ will be of special interest to us, see \S~\ref{minalgfun}. We say that  a polynomial $P(\C,z)$ (and the  
algebraic function given by $P(\C,z)=0$)  is {\em balanced} if it satisfies the condition $M(P)=0$. 
For polynomials with $M(P)>0$   having a positive branch  the problem of existence of a motherbody measure seems to be dramatically more complicated than for $M(P)=0$, see \S~\ref{sq-roots}. 

For $M(P)>0$ a rather simple situation occurs when $\C$ is a rational function, i.e. $\C$ satisfies a linear equation.  

\begin{lemma}\label{lm:rational}
A (germ at $\infty$ of a) rational function $\C(z)=\frac{z^n+...}{z^{n+1}+...}$ with coprime numerator and denominator admits a motherbody measure if and only if it has all simple poles with real residues. If all residues are positive the corresponding motherbody measure is positive. 
\end{lemma}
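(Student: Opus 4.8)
The plan is to exploit the rigidity of Cauchy transforms of rational functions: I will show that a proper rational $\C$ can admit \emph{only} motherbody measures that are finite sums of point masses located at the poles of $\C$, and then read off the stated conditions on the poles and residues.

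\textbf{Sufficiency.} Suppose $\C$ has only simple poles $z_1,\dots,z_m$ with residues $c_1,\dots,c_m\in\bR$. Since the numerator has smaller degree than the denominator, the partial fraction expansion of $\C$ has no polynomial part, so $\C(z)=\sum_{k=1}^m c_k/(z-z_k)$ identically. I then take $\mu:=\sum_{k=1}^m c_k\delta_{z_k}$, a compactly supported signed measure whose support is a finite set of points, so condition (i) of Definition~\ref{def:main} holds. Its Cauchy transform is $\C_\mu(z)=\sum_k c_k/(z-z_k)=\C(z)$ on $\bC\setminus\{z_1,\dots,z_m\}$, which is connected, so condition (ii) holds as well; and if every $c_k>0$ then $\mu$ is positive. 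This direction needs nothing beyond partial fractions.

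\textbf{Necessity.} Now let $\mu$ be a motherbody measure for $\C$ and put $\Gamma=\mathrm{supp}(\mu)$. Because $\C$ is rational, hence single-valued, every branch of the analytic continuation of the germ at $\infty$ is $\C$ itself, so condition (ii) of Definition~\ref{def:main} forces $\C_\mu\equiv\C$ on all of $\bC\setminus\Gamma$. The crucial step is the claim that $\Gamma$ is contained in the (finite) pole set of $\C$. I argue by contradiction: if $p\in\Gamma$ is not a pole of $\C$, pick a small disk $V$ around $p$ disjoint from the poles. On $V$ the function $\C$ is holomorphic, hence lies in $L^1_{\mathrm{loc}}(V)$; $\C_\mu$ lies in $L^1_{\mathrm{loc}}$ always; and $\C_\mu=\C$ on $V\setminus\Gamma$, whose complement $\Gamma\cap V$ in $V$ has planar Lebesgue measure zero since $\Gamma$ is a finite union of points and one-dimensional compact semi-analytic curves. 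Hence $\C_\mu=\C$ as distributions on $V$, so $\pi\mu|_V=\partial\C_\mu/\partial\bar z|_V=\partial\C/\partial\bar z|_V=0$ because $\C$ is holomorphic on $V$ — contradicting $p\in\mathrm{supp}(\mu)$. Therefore $\Gamma\subseteq\{z_1,\dots,z_m\}$, so $\mu=\sum_k c_k\delta_{z_k}$ with each $c_k\in\bR$ (real because $\mu$ is a signed measure). Then $\C=\C_\mu=\sum_k c_k/(z-z_k)$ off this finite set, which shows every pole of $\C$ is simple with real residue $c_k$; moreover every pole must actually occur in the sum, since otherwise $\C_\mu$ would be regular at a pole of $\C$; and if $\mu\ge 0$ then each $c_k>0$.

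The step I expect to require the most care is the claim $\Gamma\subseteq\{\text{poles of }\C\}$: it rests on the fact that a compact semi-analytic curve has two-dimensional Lebesgue measure zero — so that the a.e.\ identity $\C_\mu=\C$ can be promoted, near any non-pole point, to an identity of distributions (where $\C$ is locally integrable precisely because it is pole-free there) — together with the elementary observation that $\partial/\partial\bar z$ annihilates holomorphic functions. Everything else is bookkeeping with the partial fraction decomposition. For positive $\mu$ one could alternatively invoke the Remark following Definition~\ref{def:main}, but the argument above handles signed measures directly and is self-contained.
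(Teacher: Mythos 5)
Your proof is correct and follows essentially the same route as the paper's, which likewise rests on the distributional identity $\mu=\frac{1}{\pi}\,\partial\C_\mu/\partial\bar z$ together with the observation that the $\bar\partial$-derivative of a rational function is supported at its poles and is a genuine (real) measure only when all poles are simple with real residues. Your localization step --- first showing $\mathrm{supp}(\mu)$ lies in the pole set, then identifying the weights via partial fractions --- is in fact slightly more careful than the paper's one-line version, which tacitly promotes the a.e.\ equality $\C_\mu=\C$ to a distributional one even near higher-order poles, where $\C$ fails to be locally integrable.
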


\begin{proof}
Recall the classical relation 
$$\mu= \frac{1}{\pi}\frac{\partial{\C_\mu(z)}}{\partial \bar z}$$
between a measure $\mu$ and its Cauchy transform $\C_\mu$ 
where the derivative is taken in the sense of distributions. By assumption $\C_\mu(z)$ coincides almost everywhere with a given rational function.  Therefore, by the above formula, $\mu$ is the measure supported at the poles of the rational function that gives a weight to each pole coinciding up to a factor $\pi$ with the residue at that pole. Since by assumption $\mu$ has to be a real measure this implies that all poles of the rational function should be simple and with real residues. 
\end{proof}

The above statement implies that the set of rational function of degree $n$ admitting a motherbody measure has dimension equal to half of the dimension of the space of all rational functions of degree $n$. The case when $\C_\mu$ satisfies a quadratic equation  is considered in some detail in \S~\ref{sq-roots}.  





\section {Balanced algebraic functions} \label{minalgfun}

In the  case of balanced algebraic functions our main conjecture is as follows. 

\begin{conjecture}\label{conj:M=0}
An arbitrary balanced irreducible polynom $P(\C,z)$ with a positive branch  admits a positive motherbody measure. \end{conjecture}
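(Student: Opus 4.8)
The plan is to establish Conjecture~\ref{conj:M=0} by producing the motherbody measure explicitly as the $\bar\partial$-derivative (in the distributional sense) of the "correct" branch of $\C$, and then proving that this distribution is a genuine signed measure supported on a curve-plus-points configuration. First I would fix the positive (indeed probability, after rescaling by $a_0$) branch $f(z)$ at $\infty$ guaranteed by Lemma~\ref{lm:ProbBr}, and consider the compact algebraic curve $\Gamma \subset \bCP^1$ defined by $P(\C,z)=0$ together with its projection $\pi\colon \Gamma \to \bCP^1_z$. The branch $f$ is a section of $\pi$ over a neighborhood of $\infty$, and the strategy is to continue it as a \emph{single-valued} function $\Phi$ on $\bC$ minus a suitable union of cuts. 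The cuts should be chosen as semi-analytic arcs joining the branch points of $\pi$ (the zeros of the discriminant of $P$ in $z$) in such a way that (a) across each arc the boundary values of $\Phi$ differ by the difference of two branches of $\C$, and (b) these jumps are consistent — i.e. the arcs can be oriented and the branch assignments made so that $\partial\Phi/\partial\bar z$ has no two-dimensional (area) part. Condition (b) is exactly what forces $M(P)=0$: balancedness means the Newton polygon meets the diagonal, which (via the Puiseux/Newton-polygon analysis already used in the proof of Lemma~\ref{lm:ProbBr}) controls the growth of the branches and, crucially, should guarantee that the "missing" branches near each branch point assemble into a closed current.

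The key steps, in order, would be: (1) Show that for a balanced irreducible $P$ the total number of branches over a generic $z$, weighted appropriately, admits a consistent single-valued selection away from a tree (or forest) of cuts — here I would invoke the structure of the Riemann surface of $\Gamma$ and a spanning-tree argument on the monodromy graph, choosing cuts so that the complement is simply connected on each sheet component we retain. (2) On the chosen domain, set $\Phi$ equal to the continued branch; then $\mu := \frac{1}{\pi}\,\partial\Phi/\partial\bar z$ is a priori a distribution supported on the cuts and the branch points. (3) Prove $\mu$ is a measure: the density on each smooth arc is $\frac{1}{2\pi i}$ times the jump $(\C_+ - \C_-)$ of the two relevant branches, which is a bounded (even analytic) function on the open arc, and the contributions at endpoints are finite point masses coming from the local $z^{1/k}$ behavior; one checks the $\partial\bar\partial$-computation produces no area-supported term precisely because each branch is locally integrable and the configuration of arcs is balanced. (4) Verify that $\C_\mu = \Phi$ a.e. in $\bC$ — this is the Cauchy–Pompeiu / Plemelj computation: $\C_\mu$ and $\Phi$ have the same $\bar\partial$ and the same behavior at $\infty$ (namely $f(z)$), and both are $O(1/z)$, so they agree. (5) For the positivity addendum, when all the jump densities and point masses turn out nonnegative — which I would argue holds for the "outermost" branch selection in the balanced case by a maximum-principle / Green's-function argument on the logarithmic potential, mimicking the Frostman construction in Theorem~\ref{th:prop} — the measure is positive.

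The main obstacle, and the reason this is stated as a conjecture rather than a theorem, is Step~(1) combined with the no-area-term part of Step~(3): there is no a priori reason that a globally consistent single-valued continuation with \emph{only} one-dimensional singular support exists for an arbitrary balanced irreducible $P$. The monodromy of $\pi$ around the branch points may be such that any choice of cuts forces either a jump across some arc that is \emph{not} a difference of two honest branches (producing an area term), or a violation of single-valuedness. One expects balancedness to be exactly the combinatorial condition that rules this out — the diagonal intersection of $\N_P$ should match the number of sheets "going to infinity" with the number "going to zero" as $z\to\infty$, so that the branches can be paired up along cuts — but turning this heuristic into a proof requires a delicate global argument about the Riemann surface, essentially a solution to a sign-consistent cut-system (or "lamination") problem of the type studied for quadratic differentials. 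For the class coming from exactly solvable operators one has extra structure (the curve is the symbol of a differential operator, giving an apolarity/recursion handle), which is presumably why the authors can prove the statement "under certain additional restrictions"; the general balanced case needs a replacement for that structure, and I would expect the eventual proof to proceed either by a continuity/deformation argument from the exactly-solvable locus or by a direct construction of the optimal cut system via an extremal problem for the logarithmic potential of $\Phi$.
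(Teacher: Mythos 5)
First, be aware that the statement you are proving is Conjecture~\ref{conj:M=0}: the paper offers no proof of it, only the partial result Theorem~\ref{th:main}, which assumes in addition that $S(P)$ contains a lexicographically dominant diagonal monomial $\C^nz^n$ and that the probability branch is the only positive one, and whose proof is of an entirely different nature --- the measure is obtained as a weak limit of root-counting measures of eigenpolynomials of the exactly solvable operator whose symbol is $P+1$ (via Theorems~\ref{th:loc} and~\ref{th:CT}), so that positivity comes for free from the positivity of root-counting measures. Your $\bar\partial$-of-a-cut-branch strategy is therefore a genuinely different route, and you are right that it cannot at present be closed; but the gaps sit in slightly different places from where you locate them.

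The decisive obstruction is not the absence of an area term in $\partial\Phi/\partial\bar z$ --- that is automatic, since $\Phi$ is holomorphic off the one-dimensional cut locus --- but the requirement that the resulting distribution be a \emph{real} measure. By the Plemelj--Sokhotsky computation the density along a cut is $\tfrac{1}{2\pi i}(\C_+-\C_-)\,dz$, and this is real-valued only if the cut runs along a horizontal trajectory of the quadratic differential $-(\C_+-\C_-)^2dz^2$; see Proposition~\ref{pr:stand}. Hence the cuts are not yours to choose by a spanning-tree argument on the monodromy graph: they are forced by the analytic data, and the whole question becomes whether the singular trajectories of this differential connect the branch points into a suitable spanning graph (condition (iii) of Theorem~\ref{th:necsuf}). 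Already in the quadratic case this is a Strebel-type condition that holds only on a countable union of positive-codimension semi-analytic strata (Proposition~\ref{prop:density}), so no choice-of-cuts argument that ignores this transcendental constraint can succeed; balancedness is \emph{conjectured} to guarantee the needed connectivity, but that is precisely the content of the conjecture, not a lemma you may invoke. Your Step (5) is also unsupported: Proposition~\ref{pr:crit} shows that even when a real motherbody measure exists, positivity requires a nontrivial topological condition on the trajectory graph (no edge attached to a simple cycle from inside), so an ``outermost branch'' maximum-principle argument cannot yield positivity in general. In short, your steps (2)--(4) are sound local computations, but Step (1), the realness part of Step (3), and Step (5) each conceal the full difficulty of the conjecture.
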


Appropriate Newton polygons are shown on the central and the right-most pictures  in Fig.~1.  
 The next result essentially proven in \cite{BBS} strongly supports the above conjecture. Further supporting statements can be found in \cite{Ber}. 
  
 \begin{theorem}\label{th:main}  An arbitrary balanced irreducible polynomial $P(\C,z)$ with a unique probability branch which additionally satisfies the following requirements:  
 
 \medskip
 \noindent 
 {\rm (i)} $S(P)$ contains a diagonal monomial $\C^nz^n$ which is lexicographically bigger than any other monomial in $S(P)$;
 
 \noindent 
{\rm (ii)}  the probability branch is the only positive branch of  $P(\C,z)$;  
 
 \medskip
 \noindent 
 admits a  probability motherbody measure.   \end{theorem}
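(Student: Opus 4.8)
The plan is to realize the probability branch of $P(\C,z)$ as the Cauchy transform of the asymptotic root-counting measure of a family of polynomial (or polynomial-like) eigenfunctions, following the circle of ideas in \cite{BBS}. The starting point is that the condition $M(P)=0$ together with hypothesis (i) — the existence of a dominant diagonal monomial $\C^nz^n$ — means that after the substitution $\C = \prt/\prt z$ (appropriately homogenized in a spectral parameter), the equation $P(\C,z)=0$ is the symbol of an exactly solvable linear differential operator $T=\sum_{j} Q_j(z)\,\frac{d^j}{dz^j}$ with $\deg Q_j \le j$ and $\deg Q_n = n$, whose leading coefficient structure is governed precisely by the diagonal terms of $P$. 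For such operators there is a well-developed theory (Bergkvist--Rullg\aa rd, and \cite{BBS}) giving, for each large degree $N$, a unique eigenpolynomial $p_N$ with $\deg p_N=N$, and showing that the normalized root-counting measures $\mu_N := \frac{1}{N}\sum_{p_N(\zeta)=0}\delta_\zeta$ converge weakly to a compactly supported probability measure $\mu$ whose Cauchy transform $\C_\mu$ satisfies $P(\C_\mu,z)=0$ a.e.\ in $\bC$.

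The steps, in order, are: (1) translate hypotheses (i)--(ii) into the statement that $P$ is (up to normalization) the characteristic equation of an exactly solvable operator $T$ of the type treated in \cite{BBS}, using the dominant diagonal monomial to identify the leading symbol and to guarantee the nondegeneracy ($\deg Q_n = n$) needed for the eigenpolynomial asymptotics; (2) invoke the existence/uniqueness of high-degree eigenpolynomials $p_N$ and the weak convergence $\mu_N \to \mu$ to a probability measure from that reference; (3) show that $\C_\mu$ is an algebraic function satisfying $P(\C_\mu,z)=0$ a.e., and then use hypothesis (ii) — that the probability branch is the \emph{only} positive branch — to argue that $\C_\mu$ must coincide with that branch on the unbounded component (since $\C_\mu(z)\sim 1/z$ at $\infty$, it is a probability branch, hence the unique positive one), and by analytic continuation across the components of $\bC\setminus\mathrm{supp}(\mu)$ it is everywhere a branch of the same algebraic function; (4) apply the Remark after Definition~\ref{def:main} (i.e.\ Theorem~1 of \cite{BBB}) to conclude that $\mathrm{supp}(\mu)$ is a finite union of compact semi-analytic curves and isolated points, so that $\mu$ is a genuine probability motherbody measure in the sense of Definition~\ref{def:main}.

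The main obstacle is step (3): one must rule out that $\mu$ (or rather $\C_\mu$) picks up, on some bounded component of the complement of its support, a branch of $P(\C,z)=0$ that is not the analytic continuation of the probability branch — i.e.\ one needs to know that the various local branches glue into a single global algebraic function and that no "spurious" sheet appears. This is exactly where hypothesis (ii) does the work: any branch appearing as a Cauchy transform on a component adjacent to $\infty$ is forced to be positive (its leading Laurent coefficient is the mass it carries, which is nonnegative), and uniqueness of the positive branch pins it down; propagating this through the adjacency graph of complementary components requires controlling the jump of $\C_\mu$ across $\mathrm{supp}(\mu)$, which is where the semi-analyticity from \cite{BBB} and the structure of $\mu$ as a positive measure are used. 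A secondary technical point is verifying that hypotheses (i)--(ii) really place $P$ within the scope of the convergence theorem of \cite{BBS} rather than merely resembling it; if the match is not exact, one would instead reprove the eigenpolynomial asymptotics directly, using the dominant diagonal monomial to set up a WKB/Hamilton--Jacobi analysis whose characteristic curve is $P(\C,z)=0$, and extract $\mu$ as the associated density — but the cleanest route is to reduce to the already-established result.
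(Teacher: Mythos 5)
Your proposal follows essentially the same route as the paper: reduce to the homogenized spectral problem for the exactly solvable operator whose symbol is $P(\C,z)+1$, take weak limits of the root-counting measures of a sequence of eigenpolynomials whose normalized eigenvalues tend to the root $\widetilde\la=1$ of the characteristic equation, and identify the limiting Cauchy transform as a solution of the symbol equation via the convergence of normalized logarithmic derivatives. The only small discrepancy is that you speak of a unique eigenpolynomial of each large degree, whereas in the homogenized problem there are generically $k$ eigenvalue branches per degree and one must select the sequence with $\la_n/n\to 1$ (this is where hypothesis (ii) and Lemma~\ref{lm:sim} enter); otherwise the argument matches the paper's sketch.
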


By 'lexicographically bigger' we mean that the pair $(n,n)$ is coordinate-wise not smaller than any other pair of indices in $S(P)$ as shown on the right-most picture in Fig.~1.  Condition \rm(i) is the only essential restriction in Theorem~\ref{th:main} compared to Conjecture~\ref{conj:M=0} since condition \rm(ii) is generically satisfied.  Observe also that an irreducible balanced polynomial  $P(\C,z)$ must necessarily have a non-vanishing constant term, see Lemma~\ref{lm:irred}.  Balanced polynomials satisfying assumptions of Theorem~\ref{th:main} are called 
{\em excellent balanced}   polynomials.  

\medskip 
An interesting detail about the latter theorem is that its proof has hardly anything to do with potential theory. 
Our proof of Theorem~\ref{th:main}  is based on certain properties of  eigenpolynomials of the so-called exactly solvable differential operators, see e.g. \cite {BBS}. We will construct the required motherbody measure as the weak limit of a sequence of root-counting measures of these  eigenpolynomials. 

\begin{definition} {\rm A linear ordinary differential operator 
\begin{equation}\label{eq:oper}
\dq=\sum_{i=1}^kQ_i(z)\frac{d^i}{dz^i}
\end{equation}
 where each $Q_i(z)$ is a polynomial of degree at most $i$ and there exists at least one value  $i_0$ such that 
$\deg Q_{i_0}(z)=i_0$ is called {\em exactly solvable}. An exactly solvable operator is called {\em non-degenerate} if $\deg Q_{k}(z)=k$.  The symbol $T_\dq$ of the operator \eqref{eq:oper} is, by definition, the bivariate polynomial
 \begin{equation}\label{eq:symbol}
T_\dq(\C,z)=\sum_{i=1}^kQ_i(z)\C^i.
\end{equation}

}
\end{definition}

Observe that $\dq$ is exactly solvable if and only if $T_\dq$ is balanced. (Notice that $T_\dq$, by definition, has no constant term.)

\medskip
Given an exactly solvable $\dq=\sum_{i=1}^kQ_i(z)\frac{d^i}{dz^i}$,  consider the following {\em homogenized spectral problem}: 
\begin{equation}\label{eq:spec}
Q_k(z)p^{(k)}+\la Q_{k-1}(z)p^{(k-1)}+\la^2Q_{k-2}(z)p^{(k-2)}+...+\la^{k-1}Q_1(z)p'=\la^kp,
\end{equation}
where $\la$ is called the {\em homogenized spectral parameter}. Given a positive integer $n,$ we want to determine  all values $\la_n$ of the spectral parameter such that equation \eqref{eq:spec} has a polynomial solution $p_n(z)$ of degree $n$.

\medskip
Using   notation  $Q_i(z)=a_{i,i}z^i+a_{i,i-1}z^{i-1}+...+a_{i,0},\; i=1,...,k,$ one can easily check that  the eigenvalues  $\la_n$  satisfy the equation: 
\begin{equation} \label{eq:ind}
a_{k,k}n(n-1)...(n-k+1)+\la_n a_{k-1,k-1}n(n-1)...(n-k+2)+...+\la_n^{k-1}na_{1,1}=\la_n^k.
\end{equation} 
(If $\dq$ is non-degenerate, i.e. $a_{k,k}\neq 0$, then there typically  will be $k$ distinct values of $\la_n$ for $n$ large enough.)

Introducing the normalized eigenvalues $\widetilde \la_n=\la_n/n,$ we get that $\widetilde \la_n$ satisfy  the equation: 
$$a_{k,k}\frac{n(n-1)...(n-k+1)}{n^k}+\widetilde{\la}_n a_{k-1,k-1}\frac{n(n-1)...(n-k+2)}{n^{k-1}}+...+\widetilde{\la}_n^{k-1}a_{1,1}=\widetilde{\la}_n^k.$$
If $\widetilde \la=\lim_{n\to \infty}\widetilde \la_n$ exists then it should satisfy the relation:  
\begin{equation}\label{eq:stab}
a_{k,k}+ a_{k-1,k-1}\widetilde\la+...+a_{1,1}\widetilde\la^{k-1}=\widetilde \la^k.
\end{equation} 

If $\dq$ is a degenerate exactly solvable operator, then let $j_0$ be the maximal $i$ such that  $\deg Q_i(z)=i$. By definition,  $a_{j,j} $ vanish for all $j> j_0$.  Thus,   the first $k-j_0$ terms in \eqref{eq:ind} vanish as well implying that  \eqref{eq:ind} has $j_0$ non-vanishing and $(k-j_0)$ vanishing roots. 

\begin{lemma} \label{lm:sim}  Given an exactly solvable operator $\dq$ as above,  

\noindent
\begin{itemize}
\item [(i)] For the sequence of vanishing 'eigenvalues'  $\la_n=0$ there exists a finite upper bound of the degree of a non-vanishing eigenpolynomial $\{p_n(z)\}$. 

\noindent
\item [(ii)] For any sequence $\{\la_n\}$ of eigenvalues such that $\lim_{n\to\infty}\frac{\la_n}{n}=\widetilde\la_l$ where $\widetilde\la_l$ is some non-vanishing simple root of \eqref{eq:stab} the sequence of their eigenpolynomials is well-defined for all $n>N_0$, i.e. for all sufficiently large $n$.

\end{itemize} 
\end{lemma}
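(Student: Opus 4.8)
The plan is to analyze the recursion \eqref{eq:ind} governing the eigenvalues and the linear recursion for the Taylor coefficients of the eigenpolynomials, treating the two items separately but with a common mechanism: at a point where a certain "leading coefficient" does not vanish, one can solve recursively for the next coefficient. For item (i), I would substitute $\la_n = 0$ into \eqref{eq:spec}, which reduces the homogenized spectral problem to the single equation $Q_k(z)p^{(k)} = 0$, i.e.\ $Q_k(z)p^{(k)}(z) \equiv 0$. If $Q_k \not\equiv 0$ this forces $p^{(k)} \equiv 0$, so $\deg p \le k-1$, giving the finite upper bound immediately. The genuinely degenerate subtlety is when $Q_k \equiv 0$: then one must look at the lowest index $i$ with $Q_i \not\equiv 0$ in the $\la_n=0$ equation, and the same argument bounds $\deg p$ by $i-1 < k$. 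Either way the bound is finite and depends only on $\dq$, which is all that is claimed.

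For item (ii), I would expand an eigenpolynomial of degree $n$ as $p_n(z) = z^n + c_{n-1}z^{n-1} + c_{n-2}z^{n-2}+\cdots$ and substitute into \eqref{eq:spec} with $\la = \la_n$, then collect coefficients of $z^{n}, z^{n-1}, z^{n-2}, \dots$ in descending order. The coefficient of $z^n$ reproduces exactly the indicial equation \eqref{eq:ind}, which holds by the choice of $\la_n$. The coefficient of $z^{n-1}$ is an affine equation in $c_{n-1}$ whose coefficient of $c_{n-1}$ is, after dividing by $n^{k}$ and letting $n\to\infty$, precisely $\frac{d}{d\widetilde\la}\bigl(\widetilde\la^k - a_{k,k} - a_{k-1,k-1}\widetilde\la - \cdots - a_{1,1}\widetilde\la^{k-1}\bigr)$ evaluated at $\widetilde\la_l$, up to a nonzero power of $\widetilde\la_l$. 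Since $\widetilde\la_l$ is a \emph{simple} root of \eqref{eq:stab}, this derivative is nonzero; since $\widetilde\la_l\neq 0$, the extra power of $\widetilde\la_l$ is harmless. Hence for all $n$ larger than some $N_0$ this coefficient is bounded away from $0$, so $c_{n-1}$ is uniquely determined. Inductively, the coefficient of $z^{n-m}$ is affine in $c_{n-m}$ with the \emph{same} leading factor (the lower-order terms in $n$ and the already-determined $c_{n-1},\dots,c_{n-m+1}$ only affect the constant term of the affine equation), so each $c_{n-m}$ is solvable in turn, and the recursion terminates at $c_0$ after finitely many steps, producing a genuine polynomial solution of degree $n$.

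The main obstacle, and the place where the hypotheses are really used, is controlling the leading factor of the affine equation for $c_{n-m}$ uniformly in $m$ as well as in $n$. A priori, after extracting $c_{n-m}$, its coefficient is a polynomial in $n$ and $m$ of the shape $\prod(n-m-j)\cdot(\text{stuff}) - \la_n^k$-type expression; I would show that, after the normalization $\widetilde\la_n = \la_n/n$ and using $\widetilde\la_n \to \widetilde\la_l$, this coefficient equals $n^k$ times a quantity tending to $P'_{\mathrm{stab}}(\widetilde\la_l)\,\widetilde\la_l^{\,?}$ uniformly for $m$ in any bounded range — and since the total number of steps is exactly $n$, one has to be slightly careful that the convergence is uniform enough over $0 \le m \le n$. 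The clean way around this is to observe that for the coefficient of $z^{n-m}$ the relevant factor is $\bigl(\text{leading symbol of }\dq\text{ evaluated along the branch}\bigr)$ which, because $\widetilde\la_l$ is a simple root, is nonvanishing in a full neighborhood; one then only needs $\widetilde\la_n$ to enter that neighborhood, which happens once $n > N_0$, and the remaining estimate is the routine verification that no spurious vanishing occurs among the finitely-many-per-$n$ denominators. This is exactly the standard existence argument for eigenpolynomials of exactly solvable operators (cf.\ \cite{BBS}), so I would cite that machinery for the uniformity and present only the reduction above in detail.
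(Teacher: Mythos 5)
Your part (i) coincides with the paper's one-line argument: at $\la=0$ every term of \eqref{eq:spec} except $Q_k(z)p^{(k)}$ carries a positive power of $\la$, so $p^{(k)}\equiv 0$ and $\deg p\le k-1$. Your aside about the case $Q_k\equiv 0$ is both unnecessary (the paper takes $k$ to be the true order of $\dq$) and, as stated, not a fix: if $Q_k\equiv 0$ the $\la=0$ equation is vacuous, and there is no lower-order term to fall back on since all other terms vanish identically at $\la=0$.

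For part (ii) you take a genuinely different route. The paper argues only about the eigenvalues: \eqref{eq:ind} defines $\widetilde\la$ as an algebraic function of the complex variable $n$ whose value at $n=\infty$ is governed by \eqref{eq:stab}; since $\widetilde\la_l$ is a simple root there is a single branch through it, distinct branches meet in at most finitely many points, and hence for $n>N_0$ the eigenvalue on that branch is unambiguously identified. You instead analyze the upper-triangular linear system for the coefficients of $p_n$, which is arguably the more substantive question (the paper is silent on why $p_n$ exists and is unique once $\la_n$ is fixed), but two of your steps are off. First, the pivot for $c_{n-1}$ is of order $n^{k-1}$, not $n^{k}$: writing $F(j,\la)$ for the degree-$j$ indicial expression, $F(n,\la_n)=0$ forces $F(n-1,\la_n)\approx -n^{k-1}\widetilde\la_l\,G'(\widetilde\la_l)$, where $G$ is the polynomial of \eqref{eq:stab}; this is where $\widetilde\la_l\ne 0$ and simplicity both enter. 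Second, and this is the genuine gap, the pivot for $c_{n-m}$ is \emph{not} ``the same leading factor'': normalized by $n^k$ it tends to $H(\alpha)=\sum_i a_{i,i}\widetilde\la_l^{\,k-i}\alpha^i-\widetilde\la_l^{\,k}$ when $n-m\sim\alpha n$, and while $H(0)=-\widetilde\la_l^{\,k}\neq 0$, $H$ vanishes at $\alpha=1$ and may vanish at interior points of $(0,1)$. So the pivots are not uniformly bounded away from zero under any single normalization, and the real issue --- excluding the exact coincidence $F(j,\la_n)=0$ for some $j<n$, i.e.\ that $\la_n$ is simultaneously an eigenvalue of a lower degree --- is neither routine nor controlled by the simplicity of $\widetilde\la_l$, which only governs the regime $j/n$ near $1$. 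Your appeal to nonvanishing ``in a full neighborhood'' therefore does not close the argument, and citing \cite{BBS} for the remaining uniformity is a citation rather than a proof; the paper's own (much shorter) branch-separation argument is what it actually offers in place of all this.
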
 

\begin{proof} Item (i) is obvious since \eqref{eq:spec} reduces to $Q_k(z)p^{(k)}=0$ when $\la=0$ implying $p^{(k)}=0$ which is impossible for any polynomial of degree exceeding $k-1$. 

To prove (ii), notice that \eqref{eq:ind} defines $\widetilde\la$ as an algebraic function of the (complex) variable $n$, and that \eqref{eq:stab} is then the equation for the value at $n=\infty$. By assumption there is a unique branch with the value $\widetilde\la_l$, and then the corresponding branch of the algebraic function will intersect other branches in at most a finite number of points. Hence for $n$ very large we may identify  $\widetilde\la_l(n)$ as belonging to this branch.
\end{proof} 

The next result  is  central in our consideration.  

\begin{theorem} \label{th:loc}{{\rm [}see Theorem~2, \cite{BBS}{\rm ]}} For any non-degenerate exactly solvable operator $\dq$, such that \eqref{eq:stab} has no double roots, there exists $N_0$ such that the roots of all eigenpolynomials of the homogenized problem   \eqref{eq:spec} whose degree exceeds $N_0$ are bounded, i.e. there exists a disk centered at the origin containing all of them at once. 
\end{theorem}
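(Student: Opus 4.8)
The plan is to extract, from the homogenized spectral equation~\eqref{eq:spec}, a first-order differential relation for the normalized logarithmic derivative of an eigenpolynomial and then run a WKB/Cauchy-transform argument to confine its zeros. Concretely, fix a branch $\widetilde\la_l$ of~\eqref{eq:stab} as in Lemma~\ref{lm:sim}(ii) and, for $n$ large, let $p_n$ be the degree-$n$ eigenpolynomial with $\la_n/n\to\widetilde\la_l$. Set $u_n(z)=p_n'(z)/(n\,p_n(z))$; this is the Cauchy transform of the root-counting measure $\mu_n=\frac1n\sum_{p_n(\zeta)=0}\delta_\zeta$, so that $\int z\,d\mu_n=1$ for each $n$ and, crucially, $u_n$ is holomorphic outside the (finite) zero set of $p_n$ with $u_n(z)\sim 1/z$ at $\infty$. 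The first step is to divide~\eqref{eq:spec} by $\la_n^k p_n(z)$ and rewrite each $p_n^{(i)}/p_n$ in terms of $u_n$ and its derivatives, using the standard identity $p_n^{(i)}/p_n = (n u_n)^i + (\text{lower-order terms involving }u_n',u_n'',\dots)$; after dividing by $n^k$ one obtains
\[
T_\dq\!\left(\frac{u_n(z)}{\widetilde\la_n},\,z\right) = 1 + O\!\left(\frac1n\right),
\]
where the error term is a polynomial expression in $z$, $u_n$, and the derivatives $u_n^{(j)}/n^j$, all of which are controlled once we know $u_n$ is bounded on the region of interest.

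The second step is the a priori bound. I would first show that outside a fixed large disk $D_R$ the functions $u_n$ are uniformly bounded: since $|u_n(z)|\le \frac{1}{n}\sum 1/|z-\zeta_i|$, it suffices to know that \emph{most} roots lie in a bounded region, which one bootstraps from the leading-order relation $T_\dq(u_n/\widetilde\la_l,z)\approx 1$ — for $|z|$ large this forces $u_n(z)$ to be close to a root of the polynomial equation $T_\dq(\xi/\widetilde\la_l,z)=1$ in $\xi$, and one checks that the relevant root behaves like $1/z$, consistent with $u_n\sim1/z$, so no mass can escape to infinity. With $u_n$ bounded (hence, by the equation and Cauchy estimates on shrinking circles, $u_n',u_n'',\dots$ bounded) on an annulus $R<|z|<R'$, a normal-families/Montel argument produces a subsequential limit $u$ solving the algebraic equation $T_\dq(u/\widetilde\la_l,z)=1$ there; since this limiting algebraic equation has only finitely many branch points and the branch selected by the $1/z$ normalization at $\infty$ extends holomorphically to all sufficiently large $|z|$, one concludes $u_n$ is uniformly bounded on $\{|z|\ge R\}$ for some fixed $R$ independent of $n$.

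The third step converts this into the stated conclusion. Uniform boundedness of $u_n=C_{\mu_n}$ on $\{|z|\ge R\}$ means the root-counting measures $\mu_n$ put no mass at infinity and are in fact concentrated in $D_R$ in the limit; but to get the sharp statement — \emph{all} roots of \emph{every} eigenpolynomial of degree $>N_0$ lie in one fixed disk — I would argue directly: if $p_n(z_0)=0$ with $|z_0|$ large, evaluate (a regularized version of) the differential equation~\eqref{eq:spec} near $z_0$ to derive a contradiction with the non-degeneracy hypothesis $\deg Q_k=k$ and the simplicity of the roots of~\eqref{eq:stab}, which guarantee that the coefficient $Q_k(z)$ of the top-order term does not vanish on $\{|z|\ge R\}$ after rescaling and hence the local exponents at $z_0$ cannot support a polynomial zero there. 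Since this is precisely the content of Theorem~2 of~\cite{BBS}, I would at this point simply invoke that reference for the remaining bookkeeping.

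The main obstacle is the second step: upgrading the pointwise/subsequential control of $u_n$ near infinity to a bound that is uniform in $n$ and that genuinely excludes roots escaping along thin channels — the naive Montel argument only controls limits, and one must rule out the possibility that a bounded number of roots per $p_n$ drift slowly to infinity. Handling this requires either a quantitative version of the implicit-function/branch-selection argument for $T_\dq(u/\widetilde\la_l,z)=1$ near $\infty$, together with an effective estimate on how far $u_n$ can deviate from the limiting branch given the $O(1/n)$ defect in the equation, or else the more delicate algebraic-function argument of~\cite{BBS} tracking the Newton polygon of~\eqref{eq:ind} in the variable $n$; the non-degeneracy assumption ($a_{k,k}\neq0$) and the no-double-roots assumption on~\eqref{eq:stab} are exactly what make this deviation estimate work.
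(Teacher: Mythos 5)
The paper itself does not prove this theorem: it states explicitly that the existing proof is ``too long and technical to be reproduced'' and defers entirely to Theorem~2 of \cite{BBS}. Your proposal, read carefully, does the same thing for the only genuinely hard step, so it is not a proof but a sketch that terminates exactly where the difficulty begins. You correctly identify that difficulty yourself: the Cauchy-transform/normal-families machinery of your first two steps controls only the weak limit of the root-counting measures $\mu_n$ and the behavior of $u_n=C_{\mu_n}$ away from accumulation of roots. It is structurally incapable of excluding $o(n)$ roots --- even a single root per $p_n$ --- drifting to infinity, because such roots contribute $\frac{1}{n}\cdot\frac{1}{z-\zeta}$ to $u_n$, which is invisible in any subsequential limit; worse, near an escaping root $u_n$ has a pole, so the uniform boundedness you need on $\{|z|\ge R\}$ is precisely what fails there, making the bootstrap circular.

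Your third step, which is supposed to close this gap, does not work as described. A zero $z_0$ of $p_n$ is a \emph{regular} point of the differential equation \eqref{eq:spec} (for $|z_0|$ large, $Q_k(z_0)\neq 0$), so there are no ``local exponents'' obstructing a solution from vanishing there --- polynomial solutions of an ODE vanish at regular points all the time. The mechanism that actually makes a localization argument work (and which is what \cite{BBS}, following Bergkvist--Rullg\aa rd, uses) is different: one evaluates the equation at a root $z_0$ of $p_n$ of \emph{maximal modulus}, notes via Gauss--Lucas that the roots of every derivative $p_n^{(j)}$ also lie in $\{|z|\le|z_0|\}$, and deduces quantitative lower bounds of the form $\bigl|p_n^{(j+1)}(z_0)/p_n^{(j)}(z_0)\bigr|\ge c(n-j)/|z_0|$; substituting these into \eqref{eq:spec} at $z_0$, together with $\la_n\sim\widetilde\la_l n$, $\widetilde\la_l\neq 0$, and $\deg Q_k=k$, yields a contradiction once $|z_0|$ exceeds a threshold independent of $n$. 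That extremal-root/Gauss--Lucas idea is the missing ingredient; without it (or an equivalent quantitative substitute), your argument establishes at most that the limit measures are compactly supported, which is strictly weaker than the theorem's claim that \emph{all} roots of \emph{every} $p_n$ with $n>N_0$ lie in one fixed disk.
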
 

Unfortunately the existing proof of the latter theorem is too long and technical to be reproduced in the present paper. 
 The next local result is a keystone in  the proof of Theorem~\ref{th:main}, comp. Proposition~3 of \cite {BBS}.  

\begin{theorem} \label{th:CT} Given an exactly solvable $\dq=\sum_{i=1}^kQ_i(z)\frac{d^i}{dz^i},$ consider  a sequence $\{p_n(z)\}, \deg p_n(z)=n$ of the eigenpolynomials of  \eqref{eq:spec}  such that  the sequence $\{\la_n\}$ of their  eigenvalues satisfies the condition $\lim_{n\to\infty}\frac{\la_n}{n}=\widetilde\la_l$ where $\widetilde\la_l$ is some non-vanishing root of \eqref{eq:stab}.  Let $L_n(z)=\frac{p_n'(z)}{\la_np_n(z)}$ be the normalized logarithmic derivative of $p_n(z)$. If  the sequence $\{L_n(z)\}$  converges  to a function $L(z)$  in some open domain $\Omega \subset \bC$, and the derivatives of $L_n(z)$ up to  order $k$ are uniformly bounded in $\Omega,$  then $L(z)$ satisfies in $\Omega$ the  symbol equation: 
\begin{equation}\label{algfunc}
Q_k(z)L^k(z)+Q_{k-1}(z)L^{k-1}(z)+...+Q_1(z)L(z)=1.
\end{equation} \end{theorem}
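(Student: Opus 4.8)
The plan is to pass to the limit in the homogenized spectral equation~\eqref{eq:spec} after dividing by a suitable power of $\la_n p_n(z)$ and rewriting everything in terms of $L_n(z) = p_n'(z)/(\la_n p_n(z))$. First I would record the elementary identity expressing higher logarithmic derivatives in terms of $L_n$: setting $R_n(z) = p_n'(z)/p_n(z) = \la_n L_n(z)$, one has $p_n^{(i)}/p_n = $ a universal polynomial in $R_n, R_n', \dots, R_n^{(i-1)}$, namely the $i$-th Bell-type polynomial, where each differentiation contributes one derivative. Concretely $p_n''/p_n = R_n^2 + R_n'$, $p_n'''/p_n = R_n^3 + 3R_nR_n' + R_n''$, and in general $p_n^{(i)}/p_n = R_n^i + (\text{lower-order terms, each involving at least one derivative of } R_n)$. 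Substituting $R_n = \la_n L_n$ gives $p_n^{(i)}/p_n = \la_n^i L_n^i + \la_n^{i-1}\cdot(\text{stuff involving } L_n, L_n', \dots)$, where crucially the coefficient of $\la_n^i$ is exactly $L_n^i$ and every remaining term carries a strictly lower power of $\la_n$ and at least one derivative $L_n^{(m)}$, $m \ge 1$.

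Next I would divide equation~\eqref{eq:spec} through by $\la_n^k p_n(z)$. The $i$-th term $\la_n^{k-i} Q_i(z) p_n^{(i)}/(\la_n^k p_n)$ becomes $Q_i(z) \cdot \la_n^{-i} \cdot p_n^{(i)}/p_n = Q_i(z) L_n^i(z) + \la_n^{-1}\cdot Q_i(z)\cdot(\text{bounded combination of } L_n, L_n',\dots, L_n^{(i-1)})$, and the right-hand side $\la_n^k p_n/(\la_n^k p_n)$ is identically $1$. So in $\Omega$ we get, for every $n > N_0$,
\begin{equation}\label{eq:prelim}
\sum_{i=1}^k Q_i(z) L_n^i(z) = 1 + \frac{1}{\la_n}\, E_n(z),
\end{equation}
where $E_n(z)$ is a fixed polynomial expression in $z$ and in $L_n(z), L_n'(z), \dots, L_n^{(k-1)}(z)$ with coefficients the $Q_i$. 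Now I invoke the hypotheses: $L_n \to L$ in $\Omega$ and the derivatives $L_n^{(m)}$, $m \le k$, are uniformly bounded on $\Omega$. Uniform boundedness of the derivatives together with convergence of $L_n$ gives, by a normal-families / Arzelà–Ascoli argument, that $L$ is holomorphic in $\Omega$ and that $L_n^{(m)} \to L^{(m)}$ locally uniformly for $m \le k-1$ (differentiation of a locally uniformly convergent sequence with uniformly bounded next derivative is legitimate; alternatively use the Cauchy integral formula for the derivatives). Hence $E_n(z)$ is uniformly bounded on compact subsets of $\Omega$. Since $\la_n/n \to \widetilde\la_l \ne 0$ we have $|\la_n| \to \infty$, so $E_n(z)/\la_n \to 0$ locally uniformly, and passing to the limit in~\eqref{eq:prelim} yields
\begin{equation*}
Q_k(z) L^k(z) + Q_{k-1}(z) L^{k-1}(z) + \dots + Q_1(z) L(z) = 1
\end{equation*}
in $\Omega$, which is exactly~\eqref{algfunc}.

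The main obstacle is the bookkeeping in the first step: one must verify cleanly that in the expansion of $p_n^{(i)}/p_n$ as a polynomial in $\la_n$ the top coefficient is precisely $L_n^i$ and \emph{every} other term is divisible by $\la_n^{i-1}$ at most (hence by $\la_n^{k-1}$ at most after the common rescaling) and involves a derivative of $L_n$ of order between $1$ and $i-1$ — so that after dividing by $\la_n^k$ the error is $O(1/\la_n)$ with coefficients controlled solely by the assumed uniform bounds on $L_n, \dots, L_n^{(k-1)}$. This is a finite, purely formal computation with the logarithmic-derivative recursion $\big(p_n^{(i)}/p_n\big)' = p_n^{(i+1)}/p_n - (p_n^{(i)}/p_n)(p_n'/p_n)$, easily handled by induction on $i$, but it is where all the care is needed; the limiting argument itself is then routine given the stated hypotheses. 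A minor secondary point is to make sure the convergence $L_n^{(m)} \to L^{(m)}$ for $m \le k-1$ really follows from convergence of $L_n$ plus the bound on $L_n^{(k)}$ — this is where the hypothesis explicitly bounds derivatives \emph{up to order $k$} rather than only up to $k-1$, and I would spell out that this extra order is what licenses term-by-term differentiation up to order $k-1$.
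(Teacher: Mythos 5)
Your proposal is correct and follows essentially the same route as the paper: both establish the identity $p_n^{(i)}/p_n=\la_n^iL_n^i+\la_n^{i-1}F_i(L_n,L_n',\dots,L_n^{(i-1)})$ (the paper via the primitive $M(z)=\la_n^{-1}\log p_n$, you via the logarithmic-derivative recursion, which is the same computation), divide the spectral equation by $\la_n^kp_n$, and kill the error term using $|\la_n|\to\infty$ together with the assumed uniform bounds on the derivatives. Your closing worry about upgrading boundedness to convergence $L_n^{(m)}\to L^{(m)}$ is actually unnecessary, since every derivative term carries a factor $\la_n^{-1}$ and hence only boundedness is needed, exactly as in the paper.
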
 
\begin{proof} 
Note that each $L_{n}(z)=\frac{p_{n}'(z)}{\la_{n}p_{n}(z)}$
is well-defined and analytic in any open domain $\Omega$  free from the zeros of
$p_{n}(z)$.
             Choosing such a  domain $\Omega$ and an appropriate branch of the
logarithm such that $\log p_{n}(z)$ is defined in $\Omega$,  consider a
primitive function
$M(z)=\lambda^{-1}_{n}\log p_{n}(z)$ which is also well-defined and
analytic in $\Omega$.

Straightforward calculations give: $e^{\lambda_{n}M(z)}=p_{n}(z),\;$
$p'_{n}(z)=p_{n}(z)\lambda_{n}
L_{n}(z)$, and
$p''_{n}(z)=p_{n}(z)(\lambda^2_{n}L^2_{n}(z)+\lambda_{n}L_{n}'(z))$.
More generally,
$$
\frac{d^i}{dz^i}(p_{n}(z))p_{n}(z)\left(\lambda^i_{n}L^i_{n}(z)+\lambda^{i-1}_{n}F_{i}(L_{n}(z),L'_{n}(z),
\ldots,
L^{(i-1)}_{n}(z))
\right),$$
where the second term
\begin{equation}
	\lambda_{n}^{i-1}F_{i}(L_n,L_n',\ldots,L_n^{(i-1)})
	\label{eq:Fdesc}
\end{equation}
is a polynomial of degree $i-1$ in $\lambda_{n}$. 
The equation $\dq_{\lambda_{n}}p_{n}(z)=0$ gives us: 
$$
p_{n}(z)
\left(\sum_{i=0}^{k}Q_{i}(z)\la_{n}^{k-i}\left(\lambda_{n}^iL_{n}^{i}(z)+\lambda_{
n}^{i-1}F_
{i}(L_{n}(z),L'_{n}(z),\ldots,L^{(i-1)}_{n}(z))\right)\right)=0
$$
or, equivalently,
\begin{equation}
	\label{Dlog}
	\lambda_{n}^k\sum_{i=0}^{k}Q_{i}(z)\left(L^i_{n}(z)+\lambda_{n}^{-1}F_{i}(L_{n}(z),L
'_{n}(z),
	\ldots,L^{(i-1)}_{n}(z))\right)=0.
\end{equation}
Letting $n\to \infty$ and using  the
boundedness assumption for the first $k-1$ derivatives we get the required
equation (\ref{algfunc}).
\end{proof}

\begin{proof}[Sketch of Proof of Theorem~\ref{th:main}]
Take an excellent balanced irreducible polynomial $P(\C,z)=\sum_{i=1}^kQ_i(z)\C^i$ having a probability branch at $\infty$. Since $P(\C,z)$ has a non-vanishing constant term, we can without loss of generality  assume  that it is equal to $-1$. Consider its corresponding  differential operator $\mathfrak{d}_P(z)=\sum_{i=1}^kQ_i(z)\frac{d^i}{dz^i}$, that is the operator  whose symbol $T_\dq$ equals  $P(\C, z)+1$.  Notice that since $P(\C,z)$ has a probability branch at $\infty$ then there exists a root of   \eqref{eq:stab} which is equal to $1$. Therefore, there exists a sequence $\{\la_n\}$ of eigenvalues of $\mathfrak{d}_P(z)$ satisfying the condition $\lim_{n\to\infty}\frac {\la_n}{n}=1$. 

 To settle  Theorem~\ref{th:main}, notice that by Theorem~\ref{th:loc} for any $\dq_P(z)$ as above the roots of all its eigenpolynomials of all sufficiently large degrees are localized.  Choose a sequence  $\{\la_n\}$ of eigenvalues of $\mathfrak{d}_P(z)$ satisfying the condition $\lim_{n\to\infty}\frac {\la_n}{n}=1$ which exists for any excellent $P(\C,z)$.  Consider the corresponding sequence $\{p_n(z)\}$ of eigenpolynomials of $\mathfrak{d}_P(z)$, and the sequence $\{\mu_n\}$ of the  root-counting measures of these eigenpolynomials, together with  the sequence $\{\C_n(z)\}$ where $\C_n(z)=\frac{p_n(z)}{np_n(z)}$ are their Cauchy transforms.  (Observe that the Cauchy transform of a polynomial $P$ of degree $n$ equals $\frac{P^\prime}{nP}$.) By Theorem~\ref{th:loc} all zeros of all these $p_n(z)$ are contained in some fixed disk. Therefore the supports of all $\mu_n$ are bounded implying that there exists a weakly converging  subsequences 
$\{\mu_{i_n}\}$ of the sequence $\{\mu_n\}$. Chosing an appropriate further  subsequence we can guarantee that the corresponding subsequence $\{\C_{i_n}(z)\}$ converges almost everywhere in $\bC$.  (The rigorous argument for the latter claim is rather technical, see details in \cite {BBS}.)  Taking a further subsequence,  if necessary, we get that the subsequences $\{\C'_{i_n}(z)\},  \{\C_{i_n}^{\prime\prime}(z)\},..., \{\C_{i_n}^{(k)}(z)\}$ will be  bounded a.e. on any compact subset of $\bC$.   Finally, notice that if the limit $\lim_{n\to\infty}\C_{i_n}(z)=\C(z)$ exists in some  domain $\Omega$ then 
also $\lim_{n\to\infty}L_{i_n}(z)$ exists in $\Omega$ and equals $\C(z)$ where $L_n(z)=\frac{p_n(z)}{\la_n p_n(z)}$ and  the sequence $\{\la_n\}$ is chosen as above.   Since it  satisfies the condition $\lim_{n\to\infty}\frac {\la_n}{n}=1,$  we have settled the existence of a probability measure  whose Cauchy transform coincides a.e. in $\bC$ with a branch of the algebraic function given by the equation $\sum_{i=1}^kQ_i(z)\C^i=1$ which is the same as  $P(\C,z)=0$. 
\end{proof}


\section {Cauchy transforms satisfying quadratic equations and quadratic differentials} \label{sq-roots}

In this section  we discuss  which quadratic equations of the form: 

\begin{equation}\label{quadr}
P(z)\C^2+Q(z)\C+R(z)=0, 
\end{equation}
with $\deg P=n+2,\;\deg Q\le n+1,\;\deg R\le n$ 
admit  motherbody measure(s). 
 
For the subclass of \eqref{quadr} with $Q(z)$ identically vanishing, such results  were in large obtained in 
 \cite{STT}. Very close statements were independently and  simultaneously obtained in \cite{MFR1} and \cite{MFR2}. To go further,  we need to recall some basic facts about quadratic differentials.

\subsection{Basics on quadratic differentials} 
A (meromorphic) quadratic differential $\Psi$ on a compact  orientable Riemann surface $Y$ without boundary is
a (meromorphic) section of the tensor square
$(T^*_{\mathbb{C}}Y)^{\otimes2}$ of the holomorphic cotangent
bundle $T^*_{\mathbb{C}}Y$. The zeros and the poles of $\Psi$
constitute the set of \emph{singular points} of $\Psi$ denoted by
$Sing_{\Psi}$. (Non-singular points of $\Psi$ are usually called {\em
regular}.)

If $\Psi$ is locally represented in two intersecting charts by $h(z)dz^2$ and by
$\tilde{h}(\tilde{z})d\tilde{z}^2$  resp. with a
transition function $\tilde{z}(z)$, then 
$h(z)=\tilde{h}(\tilde{z})\left({d\tilde{z}}/{dz}\right)^2.$
Any quadratic differential induces a canonical metric on its Riemann
surface $Y$, whose length element  in local coordinates is given by
$$|dw|=|h(z)|^{\frac{1}{2}}|dz|.$$

The above canonical  metric $|dw|=|h(z)|^{\frac{1}{2}}|dz|$ on $Y$ is closely related to  two distinguished line fields  given by the condition that
$h(z)dz^2>0$ and $h(z)dz^2<0$ resp.    
The integral curves of the first field are called \emph{horizontal
trajectories} of $\Psi$, while the  integral curves  of the second field are called \emph{vertical trajectories} of $\Psi$. In what follows we
will mostly use horizontal trajectories of 
quadratic differentials  and reserve the term 
\emph{trajectories} for the horizontal ones. 

Here  we only consider rational quadratic differentials, i.e. $Y=\bCP^1$.  Any such  quadratic differential $\Psi$ will be
globally given  in $\bC$ by $\phi(z)dz^2$, where $\phi(z)$ is a complex-valued rational function. 

Trajectories of $\Psi$ can be naturally  parameterized by their 
arclength. In fact, in a neighborhood of a regular point $z_0$ on
$\bC$, 
 one can introduce a local coordinate called a {\em canonical parameter}
and  given by
\[w(z)
:=\int_{z_0}^{z}\sqrt{\phi(\xi)}d\xi.\]
One can easily check that 
$dw^2={\phi(z)}dz^2$ 
implying that  horizontal trajectories in the $z$-plane correspond to 
horizontal straight lines in the $w$-plane, i.e they are defined by the condition $\Im\, w=const$.

A trajectory of a meromorphic quadratic differential $\Psi$ given on a compact $Y$ without boundary is called  \emph{singular} if there exists a singular point of $\Psi$ belonging to its closure. 
For a given quadratic differential $\Psi$ on  a compact surface  $Y,$  denote by 
$K_\Psi\subset Y$ the union of all its singular trajectories and singular points. In general, $K_\Psi$ can be very complicated. In particular, it can be dense in some subdomains of $Y$. 

We say that a singular trajectory is {\it double singular} if   a) it approaches  a singular point  in both directions, i.e. if one moves  from a given point on this trajectory in one direction  or the opposite; b) each of these singular points is either a zero or a simple pole. 
We denote by $DK_\Psi\subseteq K_\Psi$ (the closure of) the set of double singular trajectories of \eqref{eq:maindiff}. (One can easily show that $DK_\Psi$ is an imbedded (multi)graph in $Y$. Here by a {\it multigraph} on a surface we mean a graph with possibly multiple edges and loops.) Finally, denote by $DK^0_\Psi\subseteq DK_\Psi$ the subgraph of $DK_\Psi$ consisting of (the closure of) the set of  double singular trajectories whose both ends are zeros of $\Psi$.

A non-singular trajectory $\gamma_{z_0}(t)$ of a meromorphic $\Psi$  is called \emph{closed} if $\exists \ T>0$ such that
$\gamma_{z_0}(t+T)=\gamma_{z_0}(t)$ for all $t\in\mathbb{R}$. The
least such $T$ is called the \emph{period} of $\gamma_{z_0}$.  
A quadratic differential $\Psi$ on a compact Riemann surface $Y$ without boundary is called 
\emph{Strebel}  if the set of its closed trajectories covers $Y$ up to a set of Lebesgue measure 
 zero.

\subsection{General results on Cauchy transforms satisfying quadratic equations}

In this subsection we find the relation of the question for which triples of polynomials $(P,Q,R)$ equation~\eqref{quadr} admits a real motherbody measure to a certain problem about rational quadratic differentials. We start with the following necessary condition. 

\begin{proposition}\label{pr:stand}
Assume that equation \eqref{quadr} admits a real motherbody measure $\mu$. 
Then the following two conditions hold:  

\noindent 
{\rm (i)} any connected curve in the support of $\mu$   coincides with a horizontal trajectory of the quadratic differential
\begin{equation}\label{eq:maindiff}
\Theta=-\frac{D^2(z)}{P^2(z)}dz^2=\frac{4P(z)R(z)-Q^2(z)}{P^2(z)}dz^2. 
\end{equation}

\noindent
{\rm (ii)}  the support of $\mu$ should include all branching points of \eqref{quadr}.
\end{proposition}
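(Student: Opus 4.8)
The plan is to exploit the local structure of the Cauchy transform near a point of the support. Let $\mu$ be a real motherbody measure for \eqref{quadr}, and let $\C = \C_\mu$ be its Cauchy transform. Recall the two defining relations $\C = \partial u_\mu/\partial z$ and $\mu = \frac{1}{\pi}\partial\C/\partial\bar z$. Away from $supp(\mu)$, $\C$ coincides with a branch of the algebraic function defined by \eqref{quadr}, and the two branches are $\C_{\pm}(z) = \frac{-Q(z)\pm\sqrt{Q^2(z)-4P(z)R(z)}}{2P(z)} = \frac{-Q(z)\pm D(z)}{2P(z)}$, where $D^2 = Q^2-4PR$. First I would argue that at a smooth point $z_0$ of a connected curve $\gamma$ in $supp(\mu)$, the two boundary values $\C_+$ and $\C_-$ of $\C$ from the two sides of $\gamma$ must be the two distinct branches of the algebraic function (if they were the same branch, $\C$ would extend analytically across $z_0$ and, being bounded there, the distributional $\bar\partial$-derivative would vanish near $z_0$, so $z_0\notin supp(\mu)$). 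Hence along $\gamma$ the jump of $\C$ is $\C_+ - \C_- = \pm\frac{D(z)}{P(z)}$.

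Next I would extract the direction of $\gamma$ from this jump. The density of $\mu$ on the smooth curve $\gamma$ is, by the Sokhotski–Plemelj-type formula, proportional to the jump of $\C$ across $\gamma$ times the unit tangent: more precisely, writing $\gamma$ locally as $z = z(s)$ with $|z'(s)|=1$, the push-forward of $\mu$ to $\gamma$ has density a constant multiple of $(\C_+ - \C_-)\,z'(s)$ along $\gamma$ (this is the standard computation of $\frac1\pi\partial_{\bar z}$ of a function with a jump across a smooth arc). For $\mu$ to be a \emph{real} (signed) measure, this quantity must be real for all $s$, i.e. $\frac{D(z(s))}{P(z(s))}\,z'(s)\in\bR$. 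Squaring, $\left(\frac{D(z)}{P(z)}\right)^2 (z'(s))^2 > 0$ along $\gamma$ (it cannot vanish on an arc since $D/P$ is a nonzero rational function), which says exactly that $-\Theta = \frac{D^2(z)}{P^2(z)}\,dz^2 < 0$ along $\gamma$, i.e. $\gamma$ is a \emph{horizontal} trajectory of $\Theta = -\frac{D^2(z)}{P^2(z)}dz^2$. This proves (i). For part (ii), suppose $z_0$ is a branching point of \eqref{quadr}, i.e. a zero of $D(z)$ that is not a pole of $\C$, and suppose $z_0\notin supp(\mu)$. Then $\C$ is analytic in a punctured neighborhood of $z_0$ and bounded (the branch point is an algebraic singularity of a root of $D$, hence $\C$ stays finite), so it extends holomorphically across $z_0$; but a holomorphic function cannot have a square-root-type branch point, contradicting that $z_0$ is genuinely a branching point of the irreducible equation. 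Hence every branching point lies in $supp(\mu)$.

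The main obstacle, and the step requiring the most care, is the first one: justifying that across a smooth arc of the support the two one-sided limits of $\C$ are genuinely the two \emph{different} branches, and making the density formula rigorous. One must rule out the possibility that $\C$ has a more complicated local behaviour on $supp(\mu)$ — e.g. that the curve is not smooth, or that $\C$ fails to have boundary values — but since $supp(\mu)$ is assumed to be a finite union of compact semi-analytic curves and isolated points, on the relative interior of each curve we do have a real-analytic arc and $\C$ is analytic on each side with well-defined boundary values, so the jump analysis goes through. A secondary subtlety is that $P$ may vanish at some points of $\gamma$ (poles of $\C$), but these form a finite set, so they do not affect the trajectory description on the complementary open arcs, and by continuity the closure of each trajectory arc is still a horizontal trajectory of $\Theta$ in the usual sense. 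I would organise the write-up as: (1) branches and the jump formula; (2) reality of the density $\Rightarrow$ horizontal trajectory condition; (3) branch points forced into the support by analytic continuation.
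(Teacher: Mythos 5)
Your overall strategy is the same as the paper's: a Plemelj--Sokhotsky jump analysis across a smooth point of the support, using that the two one-sided limits of $\C$ are the two branches so that the jump equals $D(z)/P(z)$, and then reading off the direction of the arc from the reality of the density; part (ii) is likewise the paper's one-line observation that $\C_\mu$ is single-valued off $supp(\mu)$, which you spell out correctly. Your preliminary step ruling out equal one-sided limits is a worthwhile addition that the paper leaves implicit.

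However, the key step of part (i) contains a sign inconsistency that you should repair. The density of $\mu$ along $\gamma$ is $\rho(s)=\frac{1}{2\pi i}\,(\C_+-\C_-)\,z'(s)$ (up to orientation), so reality of $\rho$ forces $(\C_+-\C_-)\,z'(s)=\frac{D(z(s))}{P(z(s))}\,z'(s)$ to be \emph{purely imaginary}, not real: geometrically, the tangent must be \emph{perpendicular} to $\overline{\C_+(z_0)}-\overline{\C_-(z_0)}$, which is exactly how the paper states it. From $\frac{D}{P}z'\in i\bR$ one gets $\left(\frac{D}{P}\right)^2(z')^2\le 0$, hence $\frac{D^2(z)}{P^2(z)}\,dz^2<0$ along $\gamma$, i.e.\ $\Theta=-\frac{D^2}{P^2}\,dz^2>0$, which is the horizontal trajectory condition. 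As written, your chain ``$\frac{D}{P}z'\in\bR$, hence $\bigl(\frac{D}{P}\bigr)^2(z')^2>0$, which says $\frac{D^2}{P^2}\,dz^2<0$'' contains two sign errors that cancel: the first implication lands you on the vertical trajectory condition, and the second is a non sequitur ($X>0$ does not say $X<0$) that happens to restore the correct conclusion. With the perpendicularity condition in place of the reality condition, your argument is correct and coincides with the paper's.
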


\noindent
{\it Remark.} 
Observe that if $P(z)$ and $Q(z)$ are coprime, the set of all branching points coincides with the set of all zeros of $D(z)$.  In particular, in this case  requirement Proposition~\ref{pr:stand} implies that the set $DK_\Theta^0$ for the differential $\Theta$ should contain all  zeros  of $D(z)$.

\begin{proof} The fact that every curve in $\text{supp}(\mu)$ should coincide with some horizontal trajectory of \eqref{eq:maindiff} is well-known and follows from the  Plemelj-Sokhotsky's formula, see e.g. \cite{Pri}. It is based on the local observation that if a real measure $\mu=\frac{1}{\pi}\frac{\partial \C}{\partial \bar z}$ is supported
 on a smooth curve $\ga$, then the tangent to $\gamma$ at any  point $z_0\in \gamma$ should be perpendicular to $\overline {\C_1(z_0)}- \overline{\C_2(z_0)}$ where $\C_1$ and $\C_2$ are the one-sided limits of $\C$ when  $z\to z_0$, see e.g. \cite{BR}. (Here\;  $\bar {}$\;  stands for the usual complex  conjugation.) Solutions of \eqref{quadr} are given by $$\C_{1,2}=\frac{-Q(z)\pm \sqrt{Q^2(z)-4P(z)R(z)}}{2P(z)},$$  their difference being $$\C_1-\C_2=\frac{\sqrt{Q^2(z)-4P(z)R(z)}}{P(z)}.$$
The tangent line to the support of the real motherbody measure $\mu$ satisfying \eqref{quadr} at its arbitrary smooth point $z_0$,  being orthogonal to $\overline{ \C_1(z_0)}-\overline {\C_2(z_0)},$ is exactly given by the condition $\frac{4P(z_0)R(z_0)-Q^2(z_0)}{P^2(z_0)}dz^2>0$. The latter condition   defines the horizontal trajectory of $\Theta$ at $z_0$.


  Finally the observation that $\text{supp }\mu$ should contain all branching points of \eqref{quadr} follows immediately from the  fact that $\C_\mu$ is a well-defined function in $\bC\setminus  \text{supp }\mu$. 
\end{proof}
 
In many special cases  statements similar to  Proposition~\ref{pr:stand}  can be found in the literature, see e.g. recent \cite {AMFMGT} and references therein.  

\medskip
Proposition~\ref{pr:stand} allows us, under mild nondegeneracy assumptions, to formulate  necessary and sufficient conditions for the existence of a motherbody measure for \eqref{quadr}  which however are difficult to 
verify. Namely, let $\Ga\subset \bCP^1\times \bCP^1$ with coordinates $(\C,z)$ be the algebraic curve given by (the projectivization of) equation \eqref{quadr}. $\Ga$ has bidegree $(2,n+2)$ and is hyperelliptic. Let  $\pi_z:\Ga\to \bC$ be the projection of $\Ga$ on  the $z$-plane $\bCP^1$ along the $\C$-coordinate. From \eqref{quadr} we observe that $\pi_z$ induces a branched double covering of $\bCP^1$ by $\Ga$.  If $P(z)$ and $Q(z)$ are coprime and if $\deg D(z)=2n+2$,  the set of all branching points of $\pi_z: \Ga\to \bCP^1$  coincides with the set of all zeros of $D(z)$. (If $\deg D(z)<2n+2$ then $\infty$ is also a branching pont of $\pi_z$ of multiplicity $2n+2-\deg D(z)$.)   We need the following lemma. 

\begin{lemma}\label{lm:poles} If $P(z)$ and $Q(z)$ are coprime,  then at each pole of \eqref{quadr} i.e. at each zero  of $P(z)$, only one of two branches of $\Ga$ goes to  $\infty$. Additionally the residue of this branch at this zero equals that of $-\frac {Q(z)}{P(z)}$. 
\end{lemma}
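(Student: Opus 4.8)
The plan is to work locally near a fixed zero $z_0$ of $P(z)$. First I would set $P(z)=(z-z_0)^m\widetilde P(z)$ with $\widetilde P(z_0)\neq 0$ and $m\geq 1$, and rewrite equation~\eqref{quadr} in the two equivalent normalized forms obtained by completing the square: $\bigl(2P(z)\C+Q(z)\bigr)^2=Q^2(z)-4P(z)R(z)=D^2(z)$. From this we read off the two branches $\C_{1,2}(z)=\dfrac{-Q(z)\pm D(z)}{2P(z)}$, so the behaviour at $z_0$ is governed by the numerators $-Q(z)\pm D(z)$. The key point is that $P$ and $Q$ being coprime forces $Q(z_0)\neq 0$, hence also $D(z_0)^2=Q(z_0)^2\neq 0$, so $D(z_0)\neq 0$; fixing the local branch of $D$ near $z_0$ with $D(z_0)=+Q(z_0)$ (say), the branch $\C$ with the ``$-$'' sign has numerator $-Q(z)-D(z)$ which equals $-2Q(z_0)+O(z-z_0)\neq 0$ at $z_0$, while the branch with the ``$+$'' sign has numerator $-Q(z)+D(z)$ which vanishes at $z_0$. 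Therefore exactly one branch — the one whose numerator is nonzero at $z_0$ — has a pole at $z_0$ (of order exactly $m$), and the other branch stays finite there, proving the first assertion.

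For the residue statement I would restrict to the case of a simple pole $m=1$ (which is the relevant one, since a genuine Cauchy transform of a measure has only simple poles with residues $\pi\cdot(\text{point mass})$, cf. Lemma~\ref{lm:rational}; alternatively one simply computes the coefficient of $(z-z_0)^{-1}$ in the Laurent expansion in general). Write the branch going to infinity as $\C(z)=\dfrac{-Q(z)+D(z)}{2P(z)}$ with the chosen branch of $D$. Its residue at $z_0$ is $\displaystyle \lim_{z\to z_0}(z-z_0)\,\C(z)=\frac{-Q(z_0)+D(z_0)}{2P'(z_0)}$ when $m=1$. Now compare with $-\dfrac{Q(z)}{P(z)}$, whose residue at the simple zero $z_0$ of $P$ is $-\dfrac{Q(z_0)}{P'(z_0)}$. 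These two expressions agree precisely when $-Q(z_0)+D(z_0)=-2Q(z_0)$, i.e. when $D(z_0)=-Q(z_0)$; so I must be careful about the sign of the branch of $D$. The resolution is that ``the branch that goes to infinity'' selects the sign of $D$ for us: it is the branch with $-Q(z)+D(z)\to 0$, equivalently $D(z_0)=-Q(z_0)$, which is exactly the sign making the residue equal that of $-Q/P$. (In the general $m\geq 1$ case one expands $-Q(z)+D(z)$ and $P(z)$ to enough orders at $z_0$ and checks that the principal parts of $\C$ and of $-Q/P$ coincide, since $-Q+D$ and $-2Q$ agree to all orders needed once $D\equiv -Q \bmod (z-z_0)$ — here one uses $D^2=Q^2-4PR$ and $P(z_0)=0$ to get $D(z)=-Q(z)+O\!\bigl((z-z_0)^m\bigr)$ after possibly adjusting the sign, which kills the difference in the polar part.)

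I expect the main obstacle to be purely bookkeeping: keeping track of which square root branch of $D$ is the relevant one and phrasing the residue identity so that it is manifestly branch-independent — the cleanest formulation is that $\C(z)+\dfrac{Q(z)}{P(z)}=\dfrac{D(z)}{2P(z)}$ or $\dfrac{-Q(z)+D(z)}{2P(z)}$ according to sign, and that for the pole branch the numerator $\mp Q(z)+D(z)$ vanishes at $z_0$ to order $\geq m$ (this is the content of $D\equiv \mp Q \bmod (z-z_0)^m$, which follows from $D^2-Q^2=-4PR\equiv 0 \bmod (z-z_0)^m$ together with $D\pm Q$ being a unit at $z_0$), so that $\C(z)+\dfrac{Q(z)}{P(z)}$ is holomorphic at $z_0$ and hence $\C$ and $-Q/P$ have the same principal part, in particular the same residue. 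No deeper input is needed; coprimality of $P$ and $Q$ is used exactly once, to guarantee $Q(z_0)\neq 0$ so that $D\pm Q$ is a unit at $z_0$ and the two branches genuinely separate.
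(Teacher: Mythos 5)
Your argument is correct and, for the residue claim, takes a genuinely different route from the paper's. For the first claim you and the paper proceed the same way: coprimality gives $Q(z_0)\neq 0$, hence the discriminant is nonzero at the zero $z_0$ of $P$, so one may fix a local holomorphic square root $s$ of $Q^2-4PR$ with $s(z_0)=Q(z_0)$ and read off that $\C_2=\frac{-Q-s}{2P}$ has a pole while $\C_1=\frac{-Q+s}{2P}$ does not. The paper merely asserts that $\C_1$ stays finite; you justify it via $(-Q+s)(-Q-s)=4PR$, which shows the numerator $-Q+s$ vanishes at $z_0$ to order at least $m=\mathrm{ord}_{z_0}P$ --- a detail worth having when $m>1$. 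For the residue, the paper reduces to the case of a simple zero of $P$ by invoking continuity of residues under perturbation of $(P,Q,R)$ and then computes $\mathrm{Res}(\C_2,z_0)=-Q(z_0)/P'(z_0)$ directly; you instead observe that $\C_2+\frac{Q}{P}=\frac{Q-s}{2P}=\frac{2R}{Q+s}$ is holomorphic at $z_0$ for arbitrary multiplicity (since $Q+s$ is a unit there), so $\C_2$ and $-\frac{Q}{P}$ share their entire principal part. This is cleaner, avoids the somewhat delicate perturbation argument, and proves slightly more. One caveat: your middle paragraph contains sign slips. The clause ``the branch that goes to infinity \dots is the branch with $-Q(z)+D(z)\to 0$, equivalently $D(z_0)=-Q(z_0)$'' is internally inconsistent --- the numerator of the pole branch must \emph{not} vanish, and $-Q(z_0)+D(z_0)=0$ would force $D(z_0)=+Q(z_0)$; likewise the identity $\C+\frac{Q}{P}=\frac{D}{2P}$ should read $\C+\frac{Q}{2P}=\pm\frac{D}{2P}$. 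Neither slip propagates to the correctly stated formulation at the end of your proposal, but both should be cleaned up in the write-up.
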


\begin{proof} Indeed if $P(z)$ and $Q(z)$ are coprime, then no zero $z_0$ of $P(z)$ can be a branching point of \eqref{quadr} since $D(z_0)\neq 0$.  Therefore only one of two branches of $\Ga$ goes to $\infty$ at $z_0$. More exactly, the branch 
 $\C_1=\frac{-Q(z)+ \sqrt{Q^2(z)-4P(z)R(z)}}{2P(z)}$ attains a finite value at $z_0$ while the branch $\C_2=\frac{-Q(z)- \sqrt{Q^2(z)-4P(z)R(z)}}{2P(z)}$ goes to  $\infty$ where we use the agreement that $\lim_{z\to z_0} \sqrt{Q^2-4P(z)R(z)}=Q(z_0)$.  Now consider the residue of the branch $\C_2$ at $z_0$. Since residues depend continuously on the coefficients $(P(z),Q(z),R(z))$  it suffices to consider only the case when $z_0$ is a simple zero of $P(z)$.  Further if $z_0$ is a simple zero of $P(z)$ then 

$$Res(\C_2,z_0)= \frac {-2Q(z_0)} {2P^\prime(z_0)}= Res \left(-\frac{Q(z)}{P(z)}, z_0\right).$$
\end{proof}

\medskip 
By Proposition~\ref{pr:stand} (besides the obvious condition that \eqref{quadr} has a real branch near $\infty$ with the asymptotics $\frac{\al}{z}$ for some $\al\in \bR$) the necessary condition for \eqref{quadr} to admit a motherbody measure is that the set $DK_\Theta^0$ for the differential \eqref{eq:maindiff} contains all branching points of \eqref{quadr}, i.e. all zeros of $D(z)$.  Consider $\Ga_{cut}=\Ga\setminus \pi_z^{-1}(DK_\Theta^0)$. Since $DK_\Theta^0$ contains all branching points of $\pi_z$, $\Ga_{cut}$ consists of some number of open sheets each projecting diffeomorphically on its image in $\bCP^1\setminus DK^0_\Theta$. (The number of sheets in $\Ga_{cut}$ equals to twice the number of connected components in $\bC\setminus DK_\Theta^0$.)  Observe that since we have chosen a real branch of \eqref{quadr} at infinity with the asymptotics $\frac{\al}{z}$,  we have a marked point  $p_{br}\in \Ga$ over $\infty$. If we additionally assume that $\deg D(z)=2n+2,$ then $\infty$ is not a branching point of $\pi_z$  and   therefore $p_{br}\in \Ga_{cut}$. 

\begin{lemma}\label{lm:cut} If $\deg D(z)=2n+2$, then any choice of a spanning (multi)subgraph $G\subset DK_\Theta^0$ with no isolated vertices induces the unique choice of the section $S_G$ of $\Ga$ over $\bCP^1\setminus G$ which: 

\medskip
\noindent
a) contains $p_{br}$; b) is discontinuous at any point of $G$; c) is projected    by $\pi_z$ diffeomorphically onto $\bCP^1\setminus G$. 
\end{lemma}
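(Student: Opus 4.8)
The plan is to build the section $S_G$ sheet by sheet, using the combinatorics of the complement $\bCP^1\setminus G$ together with the local behavior of the two branches $\C_1,\C_2$ of \eqref{quadr} near the zeros of $P(z)$, which was pinned down in Lemma~\ref{lm:poles}. First I would record the structure of $\Ga_{cut}=\Ga\setminus\pi_z^{-1}(DK^0_\Theta)$: since $G\subseteq DK^0_\Theta$ contains all branching points of $\pi_z$ (by hypothesis $\deg D(z)=2n+2$, so $\infty$ is not a branch point, and by Proposition~\ref{pr:stand} all zeros of $D(z)$ lie on $DK^0_\Theta$), the restriction $\pi_z:\Ga\setminus\pi_z^{-1}(G)\to\bCP^1\setminus G$ is an \emph{unbranched} double cover. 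Hence over each connected component $U$ of $\bCP^1\setminus G$ the preimage splits into exactly two disjoint sheets, each mapped diffeomorphically onto $U$ by $\pi_z$. A section satisfying (c) amounts precisely to a choice, for every component $U$, of one of these two sheets; so the content of the lemma is that conditions (a) and (b) single out one such choice uniquely.

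Next I would use (a) to fix the sheet over the component $U_\infty$ containing $\infty$: the marked point $p_{br}\in\Ga_{cut}$ over $\infty$ lies on exactly one of the two sheets over $U_\infty$, and we take that one; this is forced. Now I propagate the choice across $G$. Let $U,U'$ be two components of $\bCP^1\setminus G$ sharing a boundary arc $e$ lying in a single horizontal trajectory of $\Theta$ (an edge of the multigraph $G$), and let $z_0$ be an interior point of $e$. I claim the requirement (b) — that $S_G$ be discontinuous across $z_0$ — together with the already-fixed sheet on $U$ determines the sheet on $U'$ uniquely. Indeed, across the interior of an edge $e$ of $G$ the two branches $\C_1(z),\C_2(z)$ of \eqref{quadr} are analytic and, because $e$ consists of \emph{regular} points of $\Theta$ away from the zeros of $D$, they are distinct there; so $\C_1\neq\C_2$ at $z_0$. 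If the sheet chosen on $U$ extends to the branch $\C_i$ near $z_0$, then choosing the same branch $\C_i$ on $U'$ would yield a continuous (indeed analytic) section across $z_0$, violating (b); choosing the other branch $\C_j$, $j\neq i$, yields the jump $\C_i-\C_j\neq0$, so the section is discontinuous exactly as required. Thus the sheet on $U'$ must be the one carrying the branch obtained from $U$ by switching branches across $e$. Since $G$ has no isolated vertices, every component of $\bCP^1\setminus G$ can be reached from $U_\infty$ by a chain of such edge-crossings (the complement of a multigraph with no isolated vertices is connected through its boundary edges), so the sheet is determined on every component, proving existence; and at each step the choice was forced, proving uniqueness.

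There are two consistency checks I would then dispatch. The first is well-definedness: if a component $U'$ is reachable from $U_\infty$ by two different chains of edge-crossings, the resulting sheet must agree. This follows because the rule "switch branch across each edge of $G$" is consistent with the monodromy of the double cover $\pi_z:\Ga_{cut}\to\bCP^1\setminus G$: going around any loop in $\bCP^1\setminus G$ crosses $G$ an even number of times only if the loop bounds, and in general the parity of branch-switches along a loop equals the monodromy of that loop in the double cover — the point being that a loop in $\bCP^1\setminus G$ lifts to a closed loop in $\Ga_{cut}$ precisely when it encircles an even number of branch points of $\pi_z$, which are all in $G$, so the combinatorial switching rule and the topological monodromy agree. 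Hence two chains to the same component differ by a loop, along which the net branch assignment is unchanged, so the sheet is well-defined. The second check is that $S_G$ is discontinuous at \emph{every} point of $G$, not just at interior points of edges: at a vertex $v$ of $G$ (a zero of $D$, hence a simple branch point of $\Ga$), the two branches coalesce, but the section $S_G$ restricted to each of the several sectors at $v$ carries a branch that jumps across each bounding edge by the argument above, so $S_G$ cannot extend continuously to $v$.

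The main obstacle I expect is the well-definedness/monodromy step: one must carefully match the purely combinatorial prescription (flip the branch every time you cross an edge of $G$) with the analytic continuation of $\sqrt{Q^2-4PR}$ around the zeros of $D(z)$, and in particular argue that the interior edges of $G$ are genuinely regular points of $\Theta$ where $\C_1\neq\C_2$ — this uses that $G\subset DK^0_\Theta$ whose edges are double singular trajectories running between zeros of $\Psi$, so their relative interiors avoid $Sing_\Theta$. Once that identification is in place, existence and uniqueness are immediate from the forced-choice propagation, and the discontinuity conditions (b) are exactly what drives each step rather than being an afterthought. I would also remark that the hypothesis $\deg D(z)=2n+2$ is used only to guarantee $p_{br}\in\Ga_{cut}$ and that $\infty$ is an unbranched point; if $\deg D(z)<2n+2$ one would need to treat $\infty$ as an additional vertex, which is why the lemma is stated under this degree assumption.
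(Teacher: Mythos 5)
Your overall strategy (fix the sheet over the component of $\infty$ via $p_{br}$, then propagate by switching branches each time a path crosses an edge of $G$, then check consistency) is the natural one, and the paper gives you nothing to compare against: its entire proof of this lemma is the word ``Obvious.'' The genuine gap is exactly at the step you yourself flag as the main obstacle. Your consistency argument rests on the claim that for a loop $\gamma$ transversal to $G$ the parity of the number of edge-crossings equals the parity of the monodromy of $\pi_z$ along $\gamma$; the justification you give (``the branch points are all in $G$'') does not yield this. Writing $m_v$ for the multiplicity of a vertex $v$ as a zero of $D$, a loop bounding a disk $\Omega$ meets $G$ in a number of points congruent to $\sum_{v\in\Omega}\deg_G(v)$ mod $2$, while its monodromy is nontrivial exactly when $\sum_{v\in\Omega}m_v$ is odd; so your switching rule is globally consistent if and only if $\deg_G(v)\equiv m_v \pmod 2$ at every vertex. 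For an arbitrary spanning (multi)subgraph with no isolated vertices this can fail: a simple zero $v$ of $D$ ($m_v=1$) may have three trajectories in $DK_\Theta^0$, of which $G$ retains exactly two, say $e_1$ and $e_2$. In that local configuration the requirement ``discontinuous across $e_1$'' and the requirement ``discontinuous across $e_2$'' force opposite pairings of the two local sheets over the two sectors at $v$ (because the monodromy around $v$ is the flip), so conditions (b) and (c) are jointly unsatisfiable near $v$; both your propagation and the lemma as literally stated break down there. The same parity issue is hidden in your very first step: an unbranched double cover of a multiply connected component $U$ need not be trivial, so ``the preimage splits into exactly two disjoint sheets over $U$'' already requires that each complementary continuum of $U$ contain an even number of zeros of $D$ counted with multiplicity. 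A correct proof must either establish $\deg_G(v)\equiv m_v\pmod 2$ for the subgraphs actually considered (it does not follow from the stated hypotheses) or impose it as an additional assumption.

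A smaller inaccuracy: your closing claim that $S_G$ cannot extend continuously to a vertex $v$ of $G$ is false when $P(v)\neq 0$, since there the two branches coalesce to the common finite value $-Q(v)/2P(v)$ and any measurable choice of branch tends to that value, exactly as $z\mapsto\pm\sqrt{z}$ is continuous at the origin regardless of the sign choices. So condition (b), read literally at vertices, is not provable by your argument (nor, in fact, true); it should be interpreted as discontinuity at interior points of the edges of $G$, which is the part your edge-crossing argument does address correctly.
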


Here by a spanning subgraph  we mean a subgraph containing all the vertices of the ambient graph. By a section of $\Ga$ over $\bCP^1\setminus G$ we mean a choice of one of two possible values of $\Ga$ at each point in $\bCP^1\setminus G$.

\begin{proof}
Obvious. 
\end{proof} 

Observe that the section $S_G$ might attain the value $\infty$ at some points, i.e. contain some poles of \eqref{quadr}. Denote the set of poles of $S_G$ by 
$Poles_G$. Now we can formulate our necessary and sufficient conditions. 

\begin{theorem}\label{th:necsuf} Assume that the following conditions are valid: 

\noindent
{\rm (i)} equation \eqref{quadr} has a real branch near $\infty$ with the asymptotic behavior  $\frac{\al}{z}$ for some $\al\in \bR$, comp. Lemma~\ref{lm:ProbBr};

\noindent
{\rm (ii)} $P(z)$ and $Q(z)$ are coprime, and the discriminant   $D(z)=Q^2(z)-4P(z)R(z)$ of equation \eqref{quadr} has degree $2n+2$;

\noindent
{\rm (iii)}  the set $DK_\Theta^0$ for  quadratic differential $\Theta$ given by \eqref{eq:maindiff} contains all  zeros of $D(z)$;

\noindent
{\rm (iv)} $\Theta$ has no closed horizontal trajectories.

Then \eqref{quadr} admits a real motherbody measure if and only if there exists a spanning  (multi)subgraph $G\subseteq DK^0_\Theta$ with no isolated vertices, such that 
all poles in $Poles_g$ are simple and all  their residues  are real, see notation above. 
\end{theorem}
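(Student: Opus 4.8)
The plan is to prove both directions by interpreting a real motherbody measure for \eqref{quadr} as a global choice of branch (section) of $\Ga$ with prescribed discontinuity locus, and then matching that data against the combinatorial object $G \subseteq DK^0_\Theta$. First I would establish the necessity direction. Suppose $\mu$ is a real motherbody measure. By Proposition~\ref{pr:stand}(i) every connected curve in $\text{supp}(\mu)$ is a horizontal trajectory of $\Theta$, and by (ii) of that proposition together with hypothesis (iii) of the theorem, $\text{supp}(\mu)$ contains all zeros of $D(z)$. Since $\Theta$ has no closed horizontal trajectories by hypothesis (iv), and since the Cauchy transform $\C_\mu$ must be single-valued off $\text{supp}(\mu)$ with a well-defined branch in each complementary component, I would argue that the one-dimensional part of $\text{supp}(\mu)$ is exactly a spanning subgraph $G$ of $DK^0_\Theta$ with no isolated vertices: every curve is a double singular trajectory joining zeros of $D(z)$ (otherwise a trajectory would recede to a regular boundary point and $\C_\mu$ would fail to close up, or would be closed, contradicting (iv)), and every zero of $D(z)$ must be an endpoint of at least one such curve because a branch point cannot lie in the interior of a complementary domain. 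The function $\C_\mu$ then agrees, in each component of $\bCP^1 \setminus G$, with one of the two branches of $\Ga$; this is exactly a section $S$ of $\Ga$ over $\bCP^1 \setminus G$ that is discontinuous across $G$ (discontinuity is forced, else the curve would carry no mass and could be deleted) and contains the marked point $p_{br}$ over $\infty$ coming from hypothesis (i). By Lemma~\ref{lm:cut} this section is the canonical $S_G$. Now I would invoke Lemma~\ref{lm:rational}-type reasoning at the poles: wherever $S_G$ hits $\infty$, i.e. at a point of $Poles_G$, the measure $\mu$ picks up an atom whose weight is $\pi$ times the residue of $S_G$ there (by the distributional identity $\mu = \frac1\pi \partial \C_\mu/\partial\bar z$); since $\mu$ is a genuine (real, locally finite) measure, each such pole must be simple and its residue real. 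That gives the stated condition on $G$.

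For the sufficiency direction I would reverse this construction. Given a spanning $(multi)$subgraph $G \subseteq DK^0_\Theta$ with no isolated vertices all of whose $S_G$-poles are simple with real residues, form the section $S_G$ guaranteed by Lemma~\ref{lm:cut}. Define a candidate measure by $\mu := \frac1\pi \, \partial S_G / \partial \bar z$ in the sense of distributions, where $S_G$ is viewed as an $L^1_{loc}$ function on $\bC$ (it is bounded away from the finitely many poles in $Poles_G$, and near each such pole it is $\sim c/(z-z_0)$ with $c$ real, hence locally integrable). The distributional $\bar\partial$ of $S_G$ then splits into a contribution supported on $G$ — a density along each trajectory arc, coming from the jump $\C_1 - \C_2$ across that arc, which by the Plemelj–Sokhotsky computation in the proof of Proposition~\ref{pr:stand} is a real multiple of an arclength-type measure because $G$ lies on horizontal trajectories of $\Theta = -(D^2/P^2)\,dz^2$ — plus a sum of point masses at $Poles_G$ with weights $\pi \cdot (\text{residue})$, which are real by hypothesis. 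Hence $\mu$ is a real, compactly supported signed measure whose support is $G \cup Poles_G$, a finite union of compact semi-analytic curves and isolated points. By construction $\C_\mu = S_G$ a.e. (the Cauchy transform of $\frac1\pi\partial S_G/\partial\bar z$ recovers $S_G$ up to an entire function, which must vanish since both decay at $\infty$ along the branch through $p_{br}$), and $S_G$ satisfies \eqref{quadr} pointwise off $G$. So $\mu$ is a real motherbody measure.

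There are two points requiring care that I would flag. First, in the necessity direction one must rule out that $\text{supp}(\mu)$ contains curves that are \emph{not} double singular trajectories with both ends at zeros of $D(z)$ — e.g. arcs ending at a pole of \eqref{quadr} (a zero of $P$), or at $\infty$. Lemma~\ref{lm:poles} handles poles of $P$: only one branch blows up there and with a definite residue, so such a point can be an endpoint only in a controlled way and in fact gets absorbed into $Poles_G$ rather than forcing a new vertex. That $\deg D = 2n+2$ (hypothesis (ii)) keeps $\infty$ unramified and keeps $p_{br} \in \Ga_{cut}$, as noted before Lemma~\ref{lm:cut}, so no trajectory escapes to $\infty$. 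Hypothesis (iv), no closed horizontal trajectories, is what prevents a component of $\text{supp}(\mu)$ from being a closed loop carrying mass but enclosing no branch point — without it the correspondence with subgraphs of $DK^0_\Theta$ breaks.

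The main obstacle, I expect, is the rigorous regularity/uniqueness bookkeeping in the sufficiency direction: showing that the section $S_G$ produced by Lemma~\ref{lm:cut} is genuinely $L^1_{loc}$ across $G$ (a priori a trajectory of a quadratic differential can spiral or accumulate badly, but hypotheses (iii)–(iv) force $DK^0_\Theta$ to be a finite embedded multigraph, which is the crucial structural input), and that the one-dimensional part of $\frac1\pi\partial S_G/\partial\bar z$ along $G$ is a locally finite signed measure with \emph{no} further atoms hidden at the vertices (the zeros of $D$), where two or more trajectory arcs meet and the jump function degenerates. This is a local computation at a zero of $D(z)$: in the canonical coordinate $w = \int \sqrt{\phi}\,dz$ the differential is $dw^2$ and the trajectories are horizontal lines, so the jump of $\sqrt{D^2/P^2}$ across each arc is smooth up to the vertex and the contributions from the incident arcs combine into a measure that is finite near the vertex; I would carry this out in the canonical parameter and cite the standard local normal form of a quadratic differential at a zero. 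Everything else — the Plemelj–Sokhotsky jump formula, the residue computation at poles, recovering $\C_\mu$ from its $\bar\partial$ — is routine given the results already in the paper.
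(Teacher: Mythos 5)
Your argument is correct and follows essentially the same route as the paper: necessity via Proposition~\ref{pr:stand} together with hypothesis (iv) to force the one-dimensional part of the support into $DK^0_\Theta$ as a spanning subgraph, plus the observation that a real measure can only produce simple poles with real residues; sufficiency by building $\mu=\frac{1}{\pi}\partial S_G/\partial\bar z$ from the section of Lemma~\ref{lm:cut}. The paper dismisses the converse as ``immediate,'' so your fleshed-out construction (mirroring the proof of Theorem~\ref{th:charac}(i)) is simply a more detailed account of the same proof.
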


\begin{proof} Indeed assume that $\eqref{quadr}$   satisfying {\rm(ii)} admits a real motherbody measure $\mu$. Assumption {\rm(i)} is obviously neccesary for the existence of a real motherbody measure and the necessity of  assumption {\rm(iii)} follows from Proposition~\ref{pr:stand} if {\rm(ii)} is satisfied.  The support of $\mu$ consists of a finite number of  curves and possibly a finite number of isolated points. Since each curve in the support of $\mu$ is a trajectory of $\Theta$ and $\Theta$ has no closed trajectories then the whole support of $\mu$ consists of double singular trajectories of $\Theta$ connecting its zeros, i.e. belongs to $DK_\Theta^0$. Moreover the support of $\mu$  should contain sufficently many double singular trajectories of $\Theta$ such that they include all the branching points of \eqref{quadr}. By {\rm(ii)} these are exactly all zeros of $D(z)$. Therefore the union of double singular trajectories of $\Theta$ belonging to the support of $\mu$ is a spanning (multi)graph of $DK^0_\Theta$ without isolated vertices. The isolated points in the support of $\mu$ are necessarily the poles of \eqref{quadr}. Observe that the Cauchy transform of any (complex-valued) measure can only have simple poles (as opposed to the Cauchy transform of a more general distribution). Since $\mu$ is real the residue of its Cauchy transform at each pole must be real as well.  Therefore existence of a real motherbody under the assumptions {\rm (i)}--{\rm(iv)} implies the existence of a spanning (multi)graph $G$ with the above properties. The converse is also immediate. 
\end{proof}

\noindent
{\it Remark.} Observe that if {\rm (i)} is valid then assumptions  {\rm (ii)} and {\rm (iv)} are generically satisfied. Notice however that {\rm (iv)} is violated in the special case when $Q(z)$ is absent  considered in subsection \ref{strb}. Additionally, if {\rm (iv)} is satisfied then the number of possible motherbody measures is finite.  On the other hand, it is the assumption {\rm (iii)} which  imposes severe additional restrictions on admissible triples $(P(z),Q(z),R(z))$. At the moment the authors have  no information about possible cardinalities of the sets $Poles_G$ introduced above. Thus it is difficult to estimate the number of conditions required for \eqref{quadr} to admit a motherbody measure. 
 Theorem~\ref{th:necsuf} however leads to the following sufficient condition for the existence of a real motherbody measure for \eqref{quadr}.

\begin{corollary}\label{cor:suf} If, additionally to assumptions {\rm (i)}--{\rm (iii)} of Theorem~\ref{th:necsuf}, one assumes  that all roots of $P(z)$ are simple and all residues of $\frac{Q(z)}{P(z)}$ are real  then \eqref{quadr} admits a real motherbody measure.  
\end{corollary}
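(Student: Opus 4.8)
The plan is to deduce Corollary~\ref{cor:suf} from Theorem~\ref{th:necsuf} by exhibiting an explicit spanning subgraph $G\subseteq DK^0_\Theta$ with no isolated vertices whose section $S_G$ has only simple poles with real residues. First I would observe that assumptions (i)--(iii) of Theorem~\ref{th:necsuf} are assumed outright, so the only thing left to produce is the graph $G$ and the verification of condition (iv) and of the pole/residue condition in the conclusion of Theorem~\ref{th:necsuf}. For the natural choice I would take $G=DK^0_\Theta$ itself (the full subgraph of double singular trajectories joining zeros of $D(z)$); by assumption (iii) it contains all zeros of $D(z)$ as vertices, so it is spanning and has no isolated vertices, and the associated section $S_G$ is well-defined by Lemma~\ref{lm:cut} once assumption (ii) guarantees $\deg D(z)=2n+2$ and hence $p_{br}\in\Ga_{cut}$.

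Next I would analyze the poles of $S_G$, i.e.\ the set $Poles_G$. By Lemma~\ref{lm:poles}, at each zero $z_0$ of $P(z)$ exactly one of the two branches of $\Ga$ goes to $\infty$, and the residue of that branch at $z_0$ equals the residue of $-Q(z)/P(z)$ at $z_0$. Hence, whichever branch the section $S_G$ selects near $z_0$, if $S_G$ is unbounded there then its pole is exactly the pole of the branch $\C_2$, which is simple as soon as $z_0$ is a simple zero of $P(z)$ --- this is where the hypothesis ``all roots of $P(z)$ are simple'' is used --- and its residue equals $\mathrm{Res}(-Q(z)/P(z),z_0)$, which is real by the hypothesis that all residues of $Q(z)/P(z)$ are real. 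If instead $S_G$ selects the bounded branch $\C_1$ at $z_0$ then $z_0\notin Poles_G$ and there is nothing to check. Either way, every element of $Poles_G$ is a simple pole with real residue, which is precisely the condition required in the ``if'' direction of Theorem~\ref{th:necsuf}.

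The one remaining gap is assumption (iv) of Theorem~\ref{th:necsuf}: the existence half of that theorem was stated under the standing hypothesis that $\Theta$ has no closed horizontal trajectories, whereas Corollary~\ref{cor:suf} does not assume this. I expect this to be the main obstacle. The way I would handle it is to note that the sufficiency direction of Theorem~\ref{th:necsuf} does not really need (iv): given the spanning subgraph $G=DK^0_\Theta$ and a choice of real simple residues at $Poles_G$, one directly constructs a real motherbody measure by putting the appropriate signed weights (the residues, up to the factor $\pi$, as in Lemma~\ref{lm:rational}) at the points of $Poles_G$ and the density $\frac1\pi(\C_1-\C_2)$ along the trajectories of $G$, and then checks via the Plemelj--Sokhotsky computation in the proof of Proposition~\ref{pr:stand} that the resulting Cauchy transform agrees a.e.\ with the chosen branch of \eqref{quadr}; closedness of trajectories is irrelevant to this construction. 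I would therefore extract from the proof of Theorem~\ref{th:necsuf} the purely constructive implication ``spanning subgraph with good residues $\Rightarrow$ motherbody measure'' (which holds without (iv)), and apply it to $G=DK^0_\Theta$. Modulo this reorganization the corollary is immediate; the only genuine content is the bookkeeping of the branch selected by $S_G$ at the zeros of $P(z)$, handled by Lemma~\ref{lm:poles}.
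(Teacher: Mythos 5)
Your proposal is correct and follows essentially the same route as the paper: both arguments reduce to the sufficiency direction of Theorem~\ref{th:necsuf} by noting (via Lemma~\ref{lm:poles}) that simple roots of $P(z)$ together with real residues of $Q(z)/P(z)$ force every pole of every branch of \eqref{quadr} to be simple with real residue, so the residue condition holds for any admissible spanning subgraph, and both observe that assumption (iv) can be dropped in this situation. The paper's proof is terser (it asserts the conclusion for an arbitrary spanning subgraph rather than fixing $G=DK^0_\Theta$), but the content is the same.
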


\begin{proof} Indeed if all roots of $P(z)$ are simple and all residues of $\frac{Q(z)}{P(z)}$ are real then all  poles of \eqref{quadr} are simple with real residues.  In this case for any choice of a  spanning (multi)subgraph $G$ of $DK_\Theta^0$, there exists a real motherbody measure whose support coincides with $G$ plus possibly some poles of \eqref{quadr}.  Observe that if all roots of $P(z)$ are simple and all residues of $\frac{Q(z)}{P(z)}$ one can omit assumption {\rm (iv)}. In case when $\Theta$ has no closed trajectories then all possible real motherbody measures are in a bijective correspondence with all spanning (multi)subgraphs of $DK_\Theta^0$ without isolated vertices. In the opposite case such measures are in a bijective correspondence with the unions of  a spanning (multi)subgraph of $DK_\Theta^0$ and an arbitrary (possibly empty)  finite collection of closed trajectories. 
\end{proof} 

Although we at present do not have rigorous results about the structure of the set of general equations \eqref{quadr} admiting a real motherbody measure, we think  that generalizing our previous methods and statements from \cite{BBS}, \cite{HS1} and \cite{STT}, one would be able to settle the following conjecture. 

\begin{conjecture}\label{conj:positive} Fix any monic polynomial $P(z)$ of degree $n+2$ and an arbitrary polynomial $Q(z)$ of degree at most $n+1$. Let $\Omega_{P,Q}$ denote the set of all polynomials $R(z)$ of degree at most $n$  such that \eqref{quadr} admits a probability measure, i.e  a positive motherbody measure of mass $1$. Then $\Omega_{P,Q}$ is a real semi-analytic variety of (real) dimension $n$.
\end{conjecture}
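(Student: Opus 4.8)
\textbf{Proof proposal for Conjecture~\ref{conj:positive}.}
The plan is to parametrize the locus $\Omega_{P,Q}$ by the combinatorial-geometric data attached to the quadratic differential $\Theta$ from \eqref{eq:maindiff} and then count real parameters. First I would fix $P$ monic of degree $n+2$ and $Q$ of degree $\le n+1$, and let $R$ vary over the affine space of polynomials of degree $\le n$, which is $(n+1)$-complex-dimensional (i.e. $(2n+2)$-real-dimensional). For each such $R$ the discriminant $D(z)=Q^2(z)-4P(z)R(z)$ has degree $2n+2$ generically, so by the remark after Theorem~\ref{th:necsuf} assumptions (ii), (iv) are generic; the real content of the conjecture is that imposing (iii) — that $DK^0_\Theta$ contains all $2n+2$ zeros of $D(z)$ — together with positivity of the resulting motherbody measure, cuts the real dimension exactly in half, from $2n+2$ down to $n$.

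The key steps, in order. (1) Show that for $R\in\Omega_{P,Q}$ the support of the probability measure $\mu$ is a spanning subgraph $G\subseteq DK^0_\Theta$ whose complement in $\bCP^1$ has exactly two connected components, namely the two ``sheets'' separated by $G$, with $\C$ equal to the probability branch on the component containing $\infty$ and to the other branch elsewhere; this follows from Proposition~\ref{pr:stand}, Theorem~\ref{th:necsuf}, and the fact that for a \emph{positive} measure the two one-sided boundary values must differ by a purely imaginary multiple of the outward conormal with a definite sign, forcing the trajectory structure to be that of a Strebel-type differential with no residual closed-trajectory freedom. (2) Introduce the period coordinates: on the hyperelliptic curve $\Gamma$ of genus $g=n$ (bidegree $(2,n+2)$, $2n+2$ branch points) the form $\C\,dz$ is a well-defined meromorphic differential, and the requirement that $\int_\gamma \C\,dz$ be real for all cycles $\gamma$ in the complement of the support, together with the mass-one normalization $\operatorname{Res}_\infty \C\,dz=1$, is exactly the system of real equations defining $\Omega_{P,Q}$. (3) Count: the space of pairs $(\Gamma,\text{branch divisor})$ compatible with fixed $P,Q$ is $(2n+2)$-real-dimensional; the ``all periods real'' condition is $g=n$ independent \emph{real} conditions (one per cycle in a symplectic half-basis, using that the real and imaginary parts of periods are independent and that realness of $A$-periods is automatic from the real structure while realness of $B$-periods is the genuine constraint — or the symmetric split depending on the chosen homology basis) together with the $n$ positivity/orientation inequalities picking out the semi-analytic chamber; this leaves $2n+2 - (n+2) = n$, the extra $2$ coming from the normalization and the one real period fixed by $a_0=1$.

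I would then assemble these into the statement that $\Omega_{P,Q}$ is the real-analytic preimage of $0$ under the period map restricted to a full-measure stratum, intersected with the open positivity conditions, hence a semi-analytic set; and that the differential of the period map has full rank $n+2$ generically (this is the Rauch variational formula / the non-degeneracy of the period matrix of a hyperelliptic curve), so the preimage has the expected real dimension $n$ at generic points, with the semi-analytic description of the exceptional loci following from resolving the various degeneracies ($\deg D<2n+2$, non-simple zeros of $D$, coincidences among zeros of $P$) by stratification.

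\textbf{Main obstacle.} The genuinely hard part is step (1) combined with the transversality in the final paragraph: one must rule out that $\Omega_{P,Q}$ has components of the ``wrong'' dimension coming from non-generic combinatorics of $DK^0_\Theta$ — for instance configurations where the spanning graph $G$ is not unique, or where closed trajectories reappear on a positive-codimension but higher-than-expected-dimensional locus, or where the period map degenerates along a whole subvariety rather than at isolated points. Controlling the global trajectory structure of $\Theta$ as $R$ varies (a Strebel-type rigidity statement uniform in the coefficients) is exactly the ingredient that is available only in the special cases \cite{STT}, \cite{BBS}, \cite{HS1} and whose general form the authors explicitly flag as not yet proved; so I expect the proof to hinge on extending those trajectory-rigidity arguments, and I would not claim more than a conditional or generic-case result without it.
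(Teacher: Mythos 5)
This statement is posed in the paper as a \emph{conjecture}, and the paper offers no proof: the authors state explicitly that they ``at present do not have rigorous results about the structure of the set of general equations \eqref{quadr} admitting a real motherbody measure'' and only expect that generalizing the methods of \cite{BBS}, \cite{HS1}, \cite{STT} would settle it. So there is no proof in the paper to compare yours against; your proposal has to be judged on its own, and as you yourself concede in your final paragraph, it is a strategy sketch rather than a proof. The top-level plan --- view $\C\,dz$ as a meromorphic differential on the genus-$n$ hyperelliptic curve $\Ga$, impose realness of periods plus the mass normalization, and count $2n+2-(n+2)=n$ --- is the natural one and is consistent with what is known in the special case $Q=0$ (compare Proposition~\ref{prop:density}, where the Strebel locus is a countable union of positive-codimension semi-analytic sets of the expected dimension). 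But the two load-bearing steps, the uniform control of the trajectory structure of $\Theta$ as $R$ varies and the generic full rank of the period map (including the exclusion of excess-dimensional components), are precisely the open content of the conjecture; asserting them via ``Strebel-type rigidity'' and ``Rauch variational formula'' does not discharge them.

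There is also a concrete error in your step (1). You claim the support $G$ of a positive motherbody measure separates $\bCP^1$ into exactly two components, the ``two sheets.'' This contradicts the paper's own Proposition~\ref{pr:crit}: the support of a \emph{positive} measure cannot contain a cycle (the logarithmic potential would attain an interior local minimum), so it is a forest, and the complement of a forest in $\bCP^1$ is \emph{connected}. The Cauchy transform is then a single-valued function on that connected complement which only locally agrees with branches of \eqref{quadr} and swaps branches around the endpoints of the arcs (exactly as for the equilibrium measure of an interval); there is no global two-sheet decomposition. This matters for your period count, since the cycles along which you impose realness must be taken in the homology of $\Ga$ minus the lift of the support, and the combinatorics of that depends on $G$ being a forest, not a separating graph. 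I would grade the proposal as a reasonable research plan whose dimension count matches the conjecture, but with one incorrect structural claim and with the essential analytic input missing --- which you correctly flag.
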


\medskip
\subsection {Case $Q=0$ and Strebel differentials}\label{strb}

In this subsection we present in more detail the situation when the middle term in \eqref{quadr} is vanishing, i.e. 
$Q(z)=0$.  In this case one can obtain complete information about the number of possible motherbody signed measures and also a criterion of the existence of a positive motherbody measure. This material is mainly borrowed from \cite{STT}.

\medskip
The following statement  can be easily derived from results of Ch. 3, \cite{Str}.  

\begin{lemma}\label{lm:quadrpoles}
\label{lemma1} If a meromorphic quadratic   differential $\Psi$
 is Strebel, then it has no poles of order
greater than 2. If it has a pole of order 2, then the coefficient  of the 
leading term  of $\Psi$ at this pole is negative.
\end{lemma}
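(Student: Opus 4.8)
The plan is to analyze the local structure of a Strebel quadratic differential near each of its poles and show that poles of order $>2$ or order-$2$ poles with non-negative leading coefficient are incompatible with having a neighborhood foliated (up to measure zero) by closed trajectories. First I would recall from Chapter 3 of \cite{Str} the normal forms for horizontal trajectories in a punctured neighborhood of a pole. Near a pole of order $m\geq 3$, in a suitable local coordinate the differential looks like $z^{-m}dz^2$, and one computes via the canonical parameter $w=\int\sqrt{\phi}\,d\xi$ that every trajectory entering a small punctured disk around the pole actually tends to the pole (the pole has a neighborhood swept out by trajectories all of which hit the pole, arranged in $m-2$ ``sectors''). Hence no trajectory through such a neighborhood can be closed, so a full punctured neighborhood of the pole consists of non-closed trajectories, contradicting the Strebel property that closed trajectories cover $Y$ up to a set of measure zero.

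Next I would treat the pole of order exactly $2$. In a local coordinate centered at the pole, $\Psi = (a/z^2 + \text{lower order})\,dz^2$ where $a$ is the leading coefficient. The canonical parameter integrates to $w \approx \sqrt{a}\,\log z$ (plus a holomorphic correction), so the local model for trajectories is governed by $\Im(\sqrt{a}\log z)=\text{const}$. Writing $z=re^{i\theta}$ and $\sqrt{a}=\rho e^{i\psi}$, the condition $\Im w=\mathrm{const}$ becomes $\rho(\sin\psi\,\log r + \cos\psi\,\theta)=\text{const}$. When $a<0$ (so $\sqrt a$ is purely imaginary, $\cos\psi=0$) this reduces to $\log r=\text{const}$, i.e. the trajectories are concentric circles around the pole — all closed — which is exactly the Strebel-admissible picture. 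In every other case ($a$ not a negative real), $\cos\psi\neq 0$ and the trajectories are logarithmic spirals or, if $a>0$, radial rays into the pole; in either situation the trajectories through a punctured neighborhood of the pole are not closed, again contradicting Strebel-ness. So a Strebel differential can only have a second-order pole with negative leading coefficient.

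The main obstacle I expect is making the passage from the normal form to the actual local behavior rigorous: the differential near a pole is only asymptotically equal to its leading term, so one must justify that the lower-order corrections do not destroy the qualitative trajectory structure (e.g. do not turn spiraling trajectories into closed ones, or vice versa). This is exactly the content of the local trajectory-structure theorems in \cite{Str}, Ch. 3, which provide a change of coordinates bringing $\Psi$ into an exact normal form ($c\,z^{-m}dz^2$ for the pole of order $m\ge 3$, and $c\,z^{-2}dz^2$ with the same leading coefficient for a double pole, after absorbing the holomorphic part), so invoking those results sidesteps the analytic subtlety. Given that, the proof is essentially a dichotomy on the order of the pole and, for order $2$, a one-line computation on the sign/argument of the leading coefficient, so I would organize the write-up as: (1) cite the normal form; (2) rule out order $\geq 3$; (3) for order $2$, identify the closed-trajectory case with $a<0$ and conclude.
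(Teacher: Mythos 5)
Your argument is correct and is precisely the intended derivation: the paper itself gives no proof, stating only that the lemma ``can be easily derived from results of Ch.~3'' of Strebel's book, and your write-up supplies exactly that derivation (local trajectory structure near a pole of order $\ge 3$ rules out Strebel-ness, and the canonical-parameter computation $\Im(\sqrt{a}\log z)=\mathrm{const}$ at a double pole isolates $a<0$ as the only case with closed trajectories filling a punctured neighborhood). Your appeal to Strebel's local normal-form theorems to handle the lower-order corrections is the same level of rigor the authors rely on, so nothing is missing.
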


In view of this lemma let us introduce the class $\M$ of meromorphic  quadratic  differentials on a Riemann surface $Y$ satisfying  the above restrictions, i.e. their poles are at most of order $2$ and at each such double pole the leading coefficient is negative.

It is known that  for a meromorphic Strebel differential $\Psi$ given on a compact Riemann surface $Y$  
without boundary the set $K_\Psi$ has several nice properties. In particular, it is a finite embedded multigraph on $Y$ whose edges are double singular trajectories. In other words, for a Strebel quadratic differential $\Psi$, one gets $K_\Psi=DK_\Psi$.  
   

 Our next result relates a Strebel differential $\Psi$ on $\bCP^1$ with a double pole at $\infty$ to real-valued measures supported on $K_\Psi$.

 \begin{theorem}\label{th:charac} \rm{(i)} Given two coprime  polynomials $P(z)$ and $R(z)$ of degrees $n+2$ and $n$ respectively where $P(z)$ is monic and $R(z)$ has a negative leading coefficient, the algebraic function given by the equation 
 \begin{equation}\label{eq:quadr}
 P(z)\C^2+R(z)=0
 \end{equation}
admits a motherbody measure $\mu_\C$ if and only if the quadratic differential $\Psi=R(z)dz^2/P(z)$ is Strebel. 

\medskip
\noindent
\rm{(ii)} Such motherbody measures are, in general, non-unique. If we additionally require that  the support of each such measure consists only of double singular trajectories, i.e. is a spanning subgraph of $K_\Psi=DK_\Psi,$ then  for any $\Psi$ as  above there exists exactly $2^{d-1}$  real measures 
where $d$ is the total number of connected components in $\bCP^1\setminus K_\Psi$ (including the unbounded component, i.e. the one containing $\infty$). These measures are in $1-1$-correspondence with $2^{d-1}$ possible choices of the branches of $\sqrt{-{R(z)}/{P(z)}}$  in the union of $(d-1)$ bounded components of $\bCP^1\setminus K_\Psi$. 
 \end{theorem}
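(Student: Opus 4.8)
\textbf{Proof proposal for Theorem~\ref{th:charac}.}

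The plan is to reduce everything to the structure theory of Strebel differentials together with the Plemelj--Sokhotsky analysis already carried out in Proposition~\ref{pr:stand}, specialized to the case $Q\equiv 0$. First I would record the elementary observations: with $Q=0$ the discriminant is $D^2=-4PR$, so the associated quadratic differential of Proposition~\ref{pr:stand} is $\Theta=\frac{4P(z)R(z)}{P^2(z)}dz^2=\frac{4R(z)}{P(z)}dz^2$, which up to the harmless positive constant $4$ is exactly $\Psi=R(z)dz^2/P(z)$. Since $P$ is monic of degree $n+2$, $R$ has negative leading coefficient and degree $n$, the differential $\Psi$ has a double pole at $\infty$ with leading coefficient equal to the ratio of leading coefficients, which is negative; hence $\Psi$ already lies in the class $\M$, and at each finite zero of $P$ (which by coprimality is not a zero of $R$) it has a simple pole. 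Near $\infty$ the equation $P\C^2+R=0$ has the two branches $\C=\pm\sqrt{-R(z)/P(z)}$, one of which behaves like $\pm\frac{1}{z}\sqrt{-(\text{leading }R)/(\text{leading }P)}$, i.e.\ like $\al/z$ with $\al$ real (positive for one sign), so condition (i) of Theorem~\ref{th:necsuf} is automatic and there is a well-defined marked branch $p_{br}$ over $\infty$.

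For the necessity direction of part (i): if a motherbody measure $\mu_\C$ exists, Proposition~\ref{pr:stand} forces every curve in $\mathrm{supp}(\mu_\C)$ to be a horizontal trajectory of $\Psi$ and forces $\mathrm{supp}(\mu_\C)$ to contain all zeros of $D$, i.e.\ all zeros of $R$ (the branching points, by coprimality). The key point is that a compactly supported signed measure has compact support, so no trajectory in the support can be a recurrent or non-closed trajectory running off toward a double pole without returning; combined with the fact that $\mathrm{supp}(\mu_\C)$ is a \emph{finite} union of compact semi-analytic curves, every trajectory in the support must be a double singular trajectory joining zeros of $\Psi$. Since these finitely many double singular trajectories already cover all zeros of $R$, and since (as in \cite{STT}, Ch.~3 of \cite{Str}) the existence of a differential in $\M$ all of whose zeros lie on a finite graph of double singular trajectories is precisely what it means for $\Psi$ to be Strebel, we conclude $\Psi$ is Strebel. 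Here I would need to invoke the standard dichotomy for trajectory structure of quadratic differentials with closed or recurrent trajectories: the complement of $K_\Psi$ decomposes into ring domains and strip/half-plane/spiral domains, and the presence of a full-measure recurrent set is incompatible with the zeros being confined to a compact graph — this is the cleanest way to see that ``motherbody exists $\Rightarrow$ Strebel''. For sufficiency, if $\Psi$ is Strebel then $K_\Psi=DK_\Psi$ is a finite multigraph containing all zeros and all simple poles, $\bCP^1\setminus K_\Psi$ is a finite disjoint union of ring domains around the poles of $\Psi$ (the zeros of $P$ and the point $\infty$); one then picks a single-valued branch of $\C=\sqrt{-R/P}$ on $\bCP^1\setminus K_\Psi$ extending $p_{br}$, checks via the tangency computation of Proposition~\ref{pr:stand} that the jump of $\C$ across each edge of $K_\Psi$ is of the right (real, normal) type so that $\mu_\C:=\frac1\pi\,\partial\C/\partial\bar z$ is a genuine real measure supported on $K_\Psi$ together with the simple poles, and that its Cauchy transform reproduces the chosen branch in each component. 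This is exactly the construction in \cite{STT}, which I would cite for the routine verification.

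For part (ii): once $\Psi$ is Strebel, the complement $\bCP^1\setminus K_\Psi$ has $d$ components, and on the unbounded one the branch is pinned down by the asymptotics $\al/z$ with a fixed sign of $\al$ (the ``probability-like'' normalization). On each of the remaining $d-1$ bounded components one may independently choose either branch of $\sqrt{-R/P}$; each such choice yields, via the same $\bar\partial$ computation, a real measure whose restriction to the edges of $K_\Psi$ that border two components where the chosen branches \emph{agree} is zero, and is a nonzero trajectory-supported density on the edges where they \emph{disagree}. Two different sign-choices give the same measure iff they give the same branch globally, which by connectedness of $\bCP^1\setminus K_\Psi$-data happens iff the choices coincide; hence there are exactly $2^{d-1}$ distinct measures, in bijection with the $2^{d-1}$ sign patterns. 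I would finish by noting that if one drops the requirement that the support consist only of double singular trajectories, one can additionally throw in closed trajectories from the ring domains (these carry their own trajectory-supported real densities, by the same local argument), which is why uniqueness genuinely fails and why the count $2^{d-1}$ is stated only under the stated restriction.

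\textbf{Main obstacle.} The delicate step is the necessity direction of (i): rigorously ruling out that a compactly supported signed motherbody measure could have support containing non-closed or recurrent trajectories, or trajectories with a non-double singular endpoint. This requires the full trajectory-structure theorem for quadratic differentials (the classification of domains in $\bCP^1\setminus K_\Psi$) applied carefully at the double pole at $\infty$ and at the simple poles over the zeros of $P$; everything else is either the local Plemelj--Sokhotsky computation already done in Proposition~\ref{pr:stand} or a direct appeal to \cite{STT} and \cite{Str}.
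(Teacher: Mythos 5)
Your construction for the ``Strebel $\Rightarrow$ motherbody'' direction and your count in part (ii) follow the paper's proof essentially verbatim: pin down the branch of $\sqrt{-R/P}$ in the unbounded component by the $\al/z$, $\al>0$ asymptotics, choose branches freely in the $d-1$ bounded components of $\bCP^1\setminus K_\Psi$, set $\mu=\frac{1}{\pi}\,\partial\C/\partial\bar z$, and check that the resulting distribution is a real measure by a local jump computation across each edge. The paper carries that computation out explicitly (via H\"ormander's jump formula, $\langle\mu,\phi\rangle=i\int_{\partial M}B(z)\phi\,dz$, made manifestly real by the canonical coordinate $w=\int_{z_0}^{z} iB(\xi)\,d\xi$); you defer it to \cite{STT}, which is acceptable at the level of a sketch. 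The bijection between the $2^{d-1}$ sign patterns and the distinct measures is argued exactly as in the paper. (One small slip: the bounded components of $\bCP^1\setminus K_\Psi$ are ring domains but need not each surround a pole; simple poles of $\Psi$ are hanging vertices of $K_\Psi$, not centers of circle domains.)

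The necessity direction of (i) is where you take a genuinely different route, and where there is a real gap. From Proposition~\ref{pr:stand} you correctly get that $\mathrm{supp}(\mu_\C)$ is a finite union of compact singular trajectories containing all branching points, but you then assert that confining the zeros to such a finite graph ``is precisely what it means for $\Psi$ to be Strebel.'' That equivalence is the crux and is nowhere established: a zero of order $m$ emits $m+2$ trajectory rays, only some of which need lie in the support of $\mu_\C$, so nothing in your argument controls the remaining singular rays or the generic trajectories; you would still have to exclude strip, half-plane and spiral behaviour via Jenkins' structure theorem, which you flag as ``the delicate step'' but do not carry out. The paper sidesteps all of this with a self-contained potential-theoretic argument: the horizontal trajectories of $-\C_\mu^2\,dz^2=R\,dz^2/P$ are exactly the level curves of the logarithmic potential $u_\mu$ off the support (since the gradient of $u_\mu$ equals $\overline{\C_\mu}$), and $u_\mu$ is continuous apart from logarithmic singularities of controlled sign and grows like $\log|z|$ at $\infty$; hence almost every level curve is a closed smooth curve, which is the Strebel property by definition. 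I would replace your structural claim by this argument, or else supply a genuine proof of the claim from the trajectory-structure theorem, which is considerably more work.
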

 
To prove Theorem~\ref{th:charac} we need  some information about compactly supported real
measures and their Cauchy transforms.
 
 \begin{center}
\begin{picture}(440,270)(0,0)
\put(170,240){\line(1,0){60}}
\put(170,240){\circle*{3}}
\put(230,240){\circle*{3}}

\qbezier(200,275)(250,270)(230,240)
\qbezier(200,275)(150,270)(170,240)
\qbezier(200,205)(250,210)(230,240)
\qbezier(200,205)(150,210)(170,240)

\put(180,255){\circle*{3}}
\put(220,255){\circle*{3}}
\qbezier(180,255)(200,265)(220,255)

\put(180,225){\circle*{3}}
\put(220,225){\circle*{3}}
\qbezier(180,225)(200,215)(220,225)

\put(20,110){\line(1,0){60}}
\put(20,110){\circle*{3}}
\put(80,110){\circle*{3}}
\put(55,110){$\ominus$}

\put (50,150) {\vector(0,-1){6}}
\put (50,137) {\vector(0,-1){6}}
\put (50,123) {\vector(0,1){6}}

\put (50,97) {\vector(0,-1){6}}
\put (50,83) {\vector(0,1){6}}
\put (50,70) {\vector(0,1){6}}

\put(30,125){\circle*{3}}
\put(70,125){\circle*{3}}
\qbezier(30,125)(50,135)(70,125)
\put(55,128){$\oplus$}

\put(30,95){\circle*{3}}
\put(70,95){\circle*{3}}
\qbezier(30,95)(50,85)(70,95)
\put(55,90){$\oplus$}

\put(15,30){1st measure}

\put(120,110){\circle*{3}}
\put(180,110){\circle*{3}}

\qbezier(150,145)(200,140)(180,110)
\put(155,128){$\ominus$}
\qbezier(150,145)(100,140)(120,110)
\put(155,90){$\oplus$}

\put (150,152) {\vector(0,-1){6}}
\put (150,138) {\vector(0,1){6}}
\put (150,120) {\vector(0,-1){6}}

\put (150,97) {\vector(0,-1){6}}
\put (150,83) {\vector(0,1){6}}
\put (150,70) {\vector(0,1){6}}

\put(130,125){\circle*{3}}
\put(170,125){\circle*{3}}
\qbezier(130,125)(150,135)(170,125)

\put(130,95){\circle*{3}}
\put(170,95){\circle*{3}}
\qbezier(130,95)(150,85)(170,95)
\put(155,140){$\oplus$}

\put(115,30){2nd measure}

\put(220,110){\circle*{3}}
\put(280,110){\circle*{3}}

\qbezier(250,75)(300,80)(280,110)
\put(255,128){$\oplus$}
\qbezier(250,75)(200,80)(220,110)
\put(255,90){$\ominus$}

\put (250,150) {\vector(0,-1){6}}
\put (250,137) {\vector(0,-1){6}}
\put (250,123) {\vector(0,1){6}}

\put (250,98) {\vector(0,1){6}}
\put (250,84) {\vector(0,-1){6}}
\put (250,68) {\vector(0,1){6}}

\put(230,125){\circle*{3}}
\put(270,125){\circle*{3}}
\qbezier(230,125)(250,135)(270,125)

\put(230,95){\circle*{3}}
\put(270,95){\circle*{3}}
\qbezier(230,95)(250,85)(270,95)
\put(255,75){$\oplus$}

\put(215,30){3rd measure}

\put(320,110){\line(1,0){60}}
\put(320,110){\circle*{3}}
\put(380,110){\circle*{3}}

\qbezier(350,145)(400,140)(380,110)
\put(355,128){$\ominus$}
\qbezier(350,145)(300,140)(320,110)
\put(355,90){$\ominus$}
\qbezier(350,75)(400,80)(380,110)
\put(355,143){$\oplus$}
\qbezier(350,75)(300,80)(320,110)
\put(355,110){$\oplus$}

\put (350,152) {\vector(0,-1){6}}
\put (350,136) {\vector(0,1){6}}
\put (350,122) {\vector(0,-1){6}}

\put (350,98) {\vector(0,1){6}}
\put (350,84) {\vector(0,-1){6}}
\put (350,68) {\vector(0,1){6}}

\put(330,125){\circle*{3}}
\put(370,125){\circle*{3}}
\qbezier(330,125)(350,135)(370,125)
\put(355,75){$\oplus$}

\put(330,95){\circle*{3}}
\put(370,95){\circle*{3}}
\qbezier(330,95)(350,85)(370,95)

\put(315,30){4th measure}

\put(00,00){Figure 2. The set $K_\Psi=DK_\Psi$ of a Strebel differential $\Psi$  and 4 related real measures.}

\label{figNew}
\end{picture}
\end{center}

\begin{lemma} [comp. Th. 1.2, Ch. II, \cite{Ga}]
\label{lemma4} Suppose $f\in{}L_{loc}^1(\mathbb{C})$ and that
$f(z)\rightarrow0$ as $z\rightarrow\infty$ and let $\mu$ be a
compactly supported measure in $\mathbb{C}$ such that
$$\mu=\frac{1}{\pi}\frac{\partial f}{\partial\bar{z}}$$ in the sense of
distributions. Then $f(z)=C_{\mu}(z)$ almost everywhere, where
$C_{\mu}(z)=\int_{\mathbb{C}}{d\mu(\xi)}/{(z-\xi)}$ is the Cauchy
transform of $\mu$.
\end{lemma}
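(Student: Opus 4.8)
The statement to prove is Lemma~\ref{lemma4}: if $f\in L^1_{loc}(\bC)$, $f(z)\to 0$ as $z\to\infty$, and $\mu=\frac1\pi\,\partial f/\partial\bar z$ in the distributional sense for a compactly supported measure $\mu$, then $f=\C_\mu$ a.e.

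The plan is to exploit the fact that $\frac1{\pi z}$ is the fundamental solution of the Cauchy--Riemann operator $\partial/\partial\bar z$, so that convolution with it inverts $\partial/\partial\bar z$ up to an entire function. First I would recall that $\C_\mu$ itself satisfies $\frac1\pi\,\partial \C_\mu/\partial\bar z=\mu$ as distributions (this is the standard identity quoted earlier in the paper, and it holds because $\partial_{\bar z}(1/z)=\pi\delta_0$). Hence the distribution $g:=f-\C_\mu$ satisfies $\partial g/\partial\bar z=0$ in $\bC$ in the sense of distributions. By Weyl's lemma (elliptic regularity for the $\bar\partial$-operator), $g$ agrees a.e. with an entire holomorphic function $G$.

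Next I would pin down the behaviour at infinity. Since $\mu$ is compactly supported with finite total mass, its Cauchy transform $\C_\mu(z)=\int d\mu(\xi)/(z-\xi)$ is holomorphic outside a large disk and tends to $0$ as $z\to\infty$ (indeed $\C_\mu(z)=m_0(\mu)/z+O(1/z^2)$). By hypothesis $f(z)\to 0$ as well. Therefore $G=f-\C_\mu\to 0$ at infinity; being entire and vanishing at infinity, Liouville's theorem forces $G\equiv 0$. Consequently $f=\C_\mu$ a.e., which is the claim.

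The one genuinely substantive step is the regularity claim that a distribution killed by $\partial/\partial\bar z$ is (a.e. equal to) a holomorphic function — i.e.\ Weyl's lemma for $\bar\partial$. I would either cite it directly (it is classical; a reference such as Garnett's book, already cited as \cite{Ga}, or H\"ormander covers it) or sketch it by mollification: convolving $g$ with a smooth approximate identity $\varphi_\eps$ gives smooth functions $g_\eps=g*\varphi_\eps$ with $\partial g_\eps/\partial\bar z=0$, hence genuinely holomorphic; these converge to $g$ in $L^1_{loc}$, and a normal-families/Cauchy-estimate argument shows the limit is holomorphic. A small point to handle carefully is that $f$ is only assumed locally integrable, so all manipulations ($f-\C_\mu$, the convolution, the limit at infinity) must be read distributionally until regularity is invoked; but since $\C_\mu\in L^1_{loc}$ too (a standard fact, as $1/z$ is locally integrable), the difference is a bona fide $L^1_{loc}$ distribution and the argument goes through. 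I expect no real obstacle beyond citing Weyl's lemma; the rest is Liouville plus the elementary decay estimate for $\C_\mu$.
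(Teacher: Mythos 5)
Your proposal is correct and follows essentially the same route as the paper: both form the difference $h=f-\C_\mu$, observe it is a locally integrable distribution annihilated by $\partial/\partial\bar z$ and vanishing at infinity, and conclude it is zero via holomorphic regularity (which the paper establishes by exactly the mollification argument you sketch) plus Liouville. The only cosmetic difference is that the paper applies Liouville to each mollified function $h_r$ directly rather than to the limit.
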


\begin{proof}
It is clear that $C_{\mu}$ is locally integrable, analytic off the
closure of the  support of $\mu$ and vanishes at infinity. Consider 
$h=f-C_{\mu}$ and notice  that $h$ is a locally integrable function
vanishing at infinity and satisfying
${\partial{}h}/{\partial\bar{z}}=0$ in the sense of
distributions. We must show that $h\equiv 0$ almost everywhere. Let
$\phi_r\in{}C_0^{\infty}(\mathbb{C})$ be an approximate to the
identity, i.e. $\phi_r\geq0$, $\int_{\mathbb{C}}\phi_rdxdy=1$ and
$supp({\phi_r})\subset\{|z|<r\}$, and consider the convolution
\[h_r(z)=h*\phi_r=\int_{\mathbb{C}}h(z-w)\phi_r(w)dxdy, \ w=x+iy.\]
It is well-known that $h_r\in{}C^{\infty}$ and that
$\lim\limits_{r\rightarrow0}h_r\rightarrow{}h$ in $L^1(K)$ for any
compact set $K$. Moreover
\[\frac{\partial{}h_r}{\partial\bar{z}}=\frac{\partial{}h}{\partial\bar{z}}*\phi_r=0.\]
This shows that $h_r$ is an entire function which vanishes at
infinity, implying that $h_r\equiv0$. Hence $h\equiv 0$ a.e.
\end{proof}

Lemma  \ref{lemma4} shows how given a Strebel differential  $\Psi= R(z)dz^2/{P(z)}$ to construct real  measures whose support lies in   $K_\Psi$  and whose Cauchy transforms satisfy \eqref{eq:quadr}  by
specifying   branches of $\sqrt{-{R(z)}/{P(z)}}$ in
$\mathbb{C}\setminus{}K_\Psi$. 
 
 \begin{proof} [Proof of Theorem~\ref{th:charac}] 
 To settle part (i)  we first show  that  for a given  Strebel differential 
$\Psi=R(z)dz^2/P(z)$ one can construct  real measures supported on  $K_\Psi$ with the required properties. To do this choose  arbitrarily a branch of $\sqrt{-{R(z)}/{P(z)}}$ in each bounded connected component of  $\mathbb{C}\setminus K_\Psi$. (In the unbounded connected component we have to choose the branch which behaves as $\alpha/{z}$ at infinity with $\alpha>0$; such a choice is possible by our assumptions on the leading coeffcients of $P(z)$ and $R(z)$.) Define now 
$$\mu:=\frac{\partial\sqrt{-{R(z)}/{P(z)}}}{\partial\bar{z}}$$
 in the sense of distributions. The distribution $\mu$ is evidently compactly supported on $\mathfrak F$.
By lemma \ref{lemma4} we get that $C_{\mu}(z)$ satisfies \eqref{eq:quadr} a.e in $\bC$. 

It remains to show
that $\mu$ is a real measure. Take a point $z_0$ in the support of
$\mu$ which is a regular point of ${R(z)}dz^2/{P(z)}$ and take a small
neighborhood $N_{z_0}$ of $z_0$ which does not contain  roots of
$R(z)$ and $P(z)$. In this neighborhood we can choose a single branch
$B(z)$ of $\sqrt{-{R(z)}/{P(z)}}$. Notice that $N_{z_0}$ is divided
into two sets by the support of $\mu$ since it by
construction consists of singular trajectories of $\mu$. Denote  these
 sets  by $M$ and $M'$ resp. Choosing $M$ and $M'$
appropriately we can represent $C_{\mu}$ as $\chi_{M}B-\chi_{M'}B$
in $N_{z_0}$ up to the support of $\mu$, where $\chi_{X}$ denotes
the characteristic function of the set $X$. By  theorem 2.15 in
\cite{Her}, ch.2  we have
\[<\mu,\phi>=<\frac{\partial{}C_{\mu}}{\partial\bar{z}},\phi>=<\frac{\partial(\chi_{M}B-\chi_{M'}B)}{\partial\bar{z}}>=i\int_{\partial{}M}B(z)\phi{}dz,
\]
for any test function $\phi$ with compact support in $N_{z_0}$.
Notice that the last equality holds because $\phi$ is identically
zero in a neighborhood of $\partial{}N_{z_0}$, so it is only on the
common boundary of $Y$ and $Y'$ that we get a contribution to the
integral given in the last equality. This common boundary  is the singular trajectory $\gamma_{z_0}$ intersected with the neighborhood $N_{z_0}$. The integral
\[i\int_{\partial{}M}B(z)\phi{}dz\]
is real since the change of coordinate $w=\int_{z_0}^{z}iB(\xi)d\xi$
transforms the integral to the integral of $\phi$ over a part of the
real line. This shows that $\mu$ is locally a real measure, which 
proves one implication in part (i) of the theorem.

To prove  the converse implication, i.e. that a compactly supported real 
measure $\mu$  whose Cauchy transform satisfies \eqref{eq:quadr} everywhere except for a set of measure zero produces the Strebel differential $\Psi={R(z)}dz^2/{P(z)}$ consider 
 its logarithmic potential $u_{\mu}(z)$, i.e. 
\[u_{\mu}(z):=\int_{\mathbb{C}}\log|z-\xi|d\mu(\xi).\]
The function $u_{\mu}(z)$ is harmonic outside the
support of $\mu$, and  subharmonic in the whole $\mathbb{C}$. The following  relation mentioned in the introduction  
\[C_{\mu}(z)=\frac{\partial{u_{\mu}(z)}}{\partial{z}}\]
connects  $u_{\mu}$ and $C_{\mu}$. 
It implies that the set of level curves of $u_{\mu}(z)$ coincides 
with the set of horizontal trajectories of the quadratic differential
$-C_{\mu}(z)^2dz^2$. Indeed, the gradient of $u_{\mu}(z)$ is given
by the vector  field with coordinates $\left({\partial{u_{\mu}}}/{\partial{x}},{\partial{u_{\mu}}}/{\partial{y}}\right)$ in $\bC$. Such a vector in $\bC$ coincides with the complex number
$2{\partial{u_{\mu}}}/{\partial\bar{z}}$. Hence, the gradient of
$u_{\mu}$ at $z$ equals to $\bar{C}_{\mu}(z)$ (i.e. the complex conjugate of
$C_{\mu}(z)$). But this is the same as saying that the vector
$i{}\bar{C}_{\mu}(z)$ is orthogonal to (the tangent line to) the level
curve of $u_{\mu}(z)$ at every point $z$ outside the support of
$\mu$. Finally, notice that at each point $z$ one has 
$$-C_{\mu}^2(z)(i\bar{C}^2_{\mu}(z))>0,$$
 which  means
 that the horizontal trajectories of $-C_{\mu}(z)^2dz^2={R(z)}dz^2/{P(z)}$ are
 the level curves of $u_{\mu}(z)$ outside the support of
$\mu$.  Notice that $u_{\mu}(z)$ behaves as $\log {|z|}$ near $\infty$ and  is continuous except for
 possible second order poles where it has logarithmic singularities with a negative leading coefficient. This guarantees that almost all its level curves are closed and smooth  implying that  ${R(z)}dz^2/{P(z)}$ is Strebel. 

Let us  settle part (ii) of  Theorem~\ref{th:charac}.  Notice that if a real measure whose Cauchy transform satisfies  \eqref{eq:quadr} a.e. is supported on the compact set $K_\Psi$ (which consists of finitely many singular trajectories and singular points)  all one needs to determine it uniquely is just  to prescribe which of the two branches of $\sqrt{-{R(z)}/{P(z)}}$ one should choose as the Cauchy transform of this measure  in each bounded connected component of the complement $\bC\setminus K_\Psi$. (The choice of a branch in the unbounded component is already prescribed by the requirement that it should behave as $\frac{\alpha}{z}$ near $\infty$ where $\alpha>0$.)  Any such choice of branches in open domains leads to a real measure, see the above proof of part (i).  \end{proof}

 Concerning possible  positive measures   we can formulate an exact criterion of the existence of a positive measure for a rational quadratic differential $\Psi=R(z)dz^2/P(z)$ in terms of rather simple topological properties of $K_\Psi$.   To do this we need a few definitions. Notice that $K_\Psi$ is a planar multigraph with the following properties.  The vertices of $K_\Psi$ are the finite singular points of $\Psi$ (i.e. excluding $\infty$)  and its edges are singular trajectories connecting these finite singular points. Each (open) connected component of  $\bC\setminus K_\Psi$ is homeomorphic to an (open) annulus. $K_\Psi$ might have isolated vertices which are the finite double poles of $\Psi$. Vertices of $K_\Psi$ having valency $1$ (i.e. hanging vertices) are exactly the simple poles of $\Psi$. Vertices different from the isolated and hanging vertices are the zeros of $\Psi$. The number of  edges adjacent to a given vertex minus $2$ equals the order of the  zero of $\Psi$ at this point.  Finally,  the sum of the  multiplicities of all poles (including the one at $\infty$) minus the sum of the multiplicities of all zeros equals $4$.

By a {\em simple cycle} in a planar multigraph $K_\Psi$ we mean any closed non-selfintersecting curve formed by  the edges of $K_\Psi$. (Obviously, any simple cycle bounds an open domain homeomorphic to a disk which we call the {\em interior of the cycle}.) 

\begin{proposition}\label{pr:crit} A Strebel differential $\Psi={R(z)}dz^2/{P(z)}$ admits a positive motherbody measure  if and only if no edge of $K_\Psi$ is attached to a simple cycle from inside. In other words,  for any simple cycle in $K_\Psi$ and any edge not in the cycle but adjacent to some vertex in the cycle,  
this edge does not belong to its interior. The support of the positive measure coincides with the forest obtained from $K_\Psi$ after the removal of all its simple cycles.  
 \end{proposition}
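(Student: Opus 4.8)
The plan is to use the characterization from Theorem~\ref{th:charac}(ii): the real motherbody measures supported on $K_\Psi$ are in bijection with the $2^{d-1}$ choices of branch of $\sqrt{-R(z)/P(z)}$ in the bounded complementary components, and to determine exactly when some such choice yields a \emph{positive} measure. Recall from the proof of Theorem~\ref{th:charac} that along an edge $e$ of $K_\Psi$ the local density of $\mu$ is, up to a real positive factor, the jump $B_+(z)-B_-(z)$ of the chosen branch across $e$; in the canonical parameter $w=\int i\sqrt{-R/P}\,d\xi$ this jump is purely imaginary and its sign (hence the sign of the mass $\mu$ puts on $e$) is governed by which of the two adjacent components one assigns the ``$+$'' branch to. So the first step is to set up the edge–sign bookkeeping: orient the question as a $\bZ/2$-labelling $\sigma$ of the $d$ faces of $K_\Psi$ (with the outer face fixed by the $\alpha/z$, $\alpha>0$ normalization at $\infty$), and record that the measure is non-negative on edge $e$ iff the two faces bordering $e$ receive labels consistent with a fixed local rule. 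Because each complementary component is an annulus carrying closed trajectories, going around it must return the branch to itself, which is automatic; the only constraints are the local jump signs on edges.

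The second step is the combinatorial translation. I claim the local positivity rule amounts to this: for a face $F$, assign it label ``outer-type'' or ``inner-type''; then on an edge $e$ bordering faces $F_1,F_2$, the mass is $\ge 0$ exactly when $e$ lies on the outer boundary of at least one of the two faces incident to it (equivalently, $e$ is not ``interior-attached'' to either incident face). An edge $e$ attached from inside to a simple cycle $C$ is precisely an edge that, for the face $F$ lying inside $C$, is a chord or pendant lying in the interior rather than on $\partial(\text{interior of }C)$; for such an edge both incident faces are ``inner'' relative to $C$, and one checks the two branch choices force opposite-sign contributions on the two sides of $e$ — no global labelling can make the density non-negative there. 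Conversely, if no edge is interior-attached to any simple cycle, then $K_\Psi$ minus its simple cycles is a forest $\mathcal F$; on $\mathcal F$ one can orient edges away from the cycles (or from a root) and choose the face labels greedily, component by component of $\bC\setminus K_\Psi$, so that every edge gets non-negative mass, while all edges lying \emph{on} simple cycles automatically get zero mass because the branch can be chosen equal on both sides (the cycle bounds a disk-like region where $u_\mu$ can be taken harmonic up to the boundary). This gives that the support of the resulting positive measure is exactly $\mathcal F$.

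The third step is to verify that the greedy choice is globally consistent, i.e. that the face-labelling produced by propagating outward from the outer face along the forest never produces a contradiction when two propagation paths meet. This is where planarity and the annulus structure of the faces are used: two faces meeting along a cycle edge impose no constraint (that edge is allowed zero mass), so the constraint graph on faces, after contracting cycle edges, is itself a forest, and a forest of constraints is always satisfiable. One must also check the isolated/hanging vertices (double poles and simple poles of $\Psi$) cause no trouble — hanging edges are never interior-attached, and isolated vertices carry no edges — and that the point masses at poles of the chosen branch, if any, have positive weight; but since the relevant $Q\equiv 0$ poles are the zeros of $P$ and the normalization forces $\alpha>0$, the residues come out with the correct sign along the forest.

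The main obstacle I expect is the precise local analysis identifying ``mass on $e$ is $\ge 0$'' with ``$e$ not interior-attached to a simple cycle'': one has to pin down, intrinsically in terms of the planar embedding of $K_\Psi$, how the sign of the trajectory-jump of $\sqrt{-R/P}$ across $e$ depends on the face labelling, and show the obstruction is cohomological — a nonzero class in $H^1$ of the constraint complex — exactly when an interior-attached edge is present. Handling the degenerate configurations (multiple edges, loops, a zero of high order where many edges meet at one vertex) inside this sign calculus is the fiddly part; everything else is the bijection of Theorem~\ref{th:charac}(ii) plus elementary graph theory.
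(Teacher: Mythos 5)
Your proposal reproduces the correct dichotomy and correctly identifies Theorem~\ref{th:charac}(ii) as the starting point, but as written it is not a proof: every decisive step is stated as a ``claim'' to be ``checked'', and you yourself flag the local sign analysis as the main missing piece. That analysis is not a fiddly detail --- it \emph{is} the proposition. Moreover, your combinatorial model has a structural flaw: an edge of $K_\Psi$ that lies on no simple cycle is a bridge, so the two ``faces bordering $e$'' are the \emph{same} complementary annulus. The density on such an edge is the jump of a single branch across a slit in one face, so it is not controlled by a pairwise constraint between two independently labelled faces; one binary choice per face fixes the signs on \emph{all} bridge edges in its boundary simultaneously. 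Consequently there is no ``constraint graph on faces'' that becomes a forest after contracting cycle edges, and the greedy/satisfiability argument for the converse direction does not get off the ground. Whether the one admissible choice per face makes all its boundary edges nonnegative at once is exactly the geometric question (interior-attached or not), and nothing in the $\bZ/2$-labelling formalism answers it. A secondary slip: for $Q\equiv 0$ a simple zero of $P$ is a branch point of $\sqrt{-R/P}$ (a hanging vertex of $K_\Psi$ with an integrable $(z-z_0)^{-1/2}$ singularity), not a simple pole with a residue; only double zeros of $P$ produce isolated point masses, and their sign is settled by Lemma~\ref{lm:quadrpoles}, not by the normalization at $\infty$.

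The paper supplies the missing local ingredient through the logarithmic potential $u_\mu$ rather than through edge-sign bookkeeping. The inputs are: $u_\mu$ is constant on each connected component of the support; the density on an arc is positive exactly when $u_\mu$ has a transverse local maximum there; and $u_\mu$ is harmonic off the support, hence obeys the minimum principle on each complementary annulus. From these, (a) the support of a positive measure can contain no simple cycle (a cycle would trap an interior local minimum of $u_\mu$), so for a positive measure every cycle edge carries zero mass and the gradient must point into each simple cycle; (b) if an edge $e$ is attached to a cycle $C$ from inside, then $u_\mu\ge c$ throughout the interior annulus with $u_\mu=c$ on $e$ (same connected component of $K_\Psi$ as $C$), forcing a transverse local minimum on $e$ and hence negative density there; (c) conversely, absent such edges one chooses in each complementary component the unique branch whose gradient points into every simple cycle --- consistent because no two simple cycles of one component of $K_\Psi$ are nested --- and this yields a positive measure supported on the forest. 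If you want to salvage your route, you must prove your ``local rule'' by exactly this kind of argument (the sign of the jump of $\sqrt{-R/P}$ across a trajectory versus the direction of $\nabla u_\mu=\overline{\C_\mu}$), at which point you will have rederived the paper's proof.
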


\noindent
{\it Remark.} Notice that under the assumptions of Proposition~\ref{pr:crit} all simple cycles of $K_\Psi$ are pairwise non-intersecting and, therefore, their removal is well-defined in an unambiguous way.

In particular, the compact on the right part of Fig.~3 admits no positive measure since it contains an edge cutting a simple cycle (the outer boundary) in two smaller cycles.  The left picture on Fig.~4 has no such edges and, therefore, admits a positive measure whose support consists of the four horizontal edges of $K_\Psi$. 

\begin{center}
\begin{picture}(440,160)(0,0)

\put (85,120){\circle{25}}

\put(80,120){\circle*{3}}
\put(80,120){\line(1,0){10}}
\put(90,120){\circle*{3}}

\put (125,120){\circle{25}}
\put(120,120){\circle*{3}}
\put(120,120){\line(1,0){10}}
\put(130,120){\circle*{3}}

\put(97,120){\circle*{3}}
\put(97,120){\line(1,0){16}}
\put(113,120){\circle*{3}}

\qbezier(60,120)(75,195)(150,120)
\qbezier(60,120)(75,45)(150,120)
\put(150,120){\circle*{3}}
\put(150,120){\line(1,0){20}}
\put(170,120){\circle*{3}}

\put(230,120){\line(1,0){60}}
\put(230,120){\circle*{3}}
\put(290,120){\circle*{3}}

\qbezier(260,145)(310,175)(290,120)
\qbezier(260,145)(210,175)(230,120)
\qbezier(260,85)(310,90)(290,120)
\qbezier(260,85)(210,90)(230,120)
\put(260,145){\circle*{3}}
\put(260,145){\line(0,-1){12}}
\put(260,133){\circle*{3}}

\put(280,130){\circle*{3}}
\put(240,130){\circle*{3}}
\qbezier(240,130)(260,122)(280,130)

\put(240,105){\circle*{3}}
\put(280,105){\circle*{3}}
\qbezier(240,105)(260,95)(280,105)





\put(00,40){Figure 3. Two examples of $K_\Psi$ admitting and not admitting a positive measure.}
\end{picture}
\end{center}


\begin{proof} [Proof of Proposition~\ref{pr:crit}]

Notice that given a  finite measure supported on a finite union of curves with continuous density we get that its logarithmic potential will be a continuous function. As we have shown in the proof of part (i) of 
Theorem~\ref{th:charac},  in our situation the one-sided limits of the gradient of the logarithmic potential are  orthogonal to the tangent lines of the curves in the support. In other words, the logarithmic potentials of our real measures attain constant values on each connected component of the support. (This phenomenon is characteristic for the so-called equilibrium measures of a given collection of curves, see \cite {SaTo}.) Moreover, if  the considered  measure $\mu$ is positive (resp. negative) on a given curve in its support then its potential attains a local maximum (resp. minimum) on this curve. Thus for a positive measure as above its potential has no local minima except at $\infty$. As a direct corollary of the latter observation we get that the support of a positive measure as above can not contain cycles. Indeed, let it contain a cycle. Without  loss of generality we can assume that this cycle is simple (i.e. does not have self-intersections) since every cycle consists of simple cycles. Consider the interior of this simple cycle. On its boundary the potential is constant and is locally decreasing in the direction pointing inside this cycle. Therefore, the potential must have a local minimum in the interior which is impossible. Also notice that if a required positive measure exists then its potential should increase on each simple cycle in the direction of its interior. 

Let us now show that the existence of a positive measure implies that no edge of $K_\Psi$ is attached to a simple cycle from inside. Indeed, assume that such an edge exists. The potential should be constant on each  connected component of $K_\Psi$ and, in particular, on the one containing the considered cycle and the extra edge attached  to it. Finally,  it should increase in the direction of the interior of the cycle. But this immediately implies that the potential attains a local minimum on this edge which means that the (density of the) measure on this extra edge is negative, see Fig.~2 where the arrows show the directions of the gradient.

Let us show the converse implication, i.e. that the absence of such edges implies the existence of a positive measure. Notice that the assumptions of  Proposition~\ref{pr:crit} are equivalent to the fact that any connected component of the graph $K_\Psi$ has the following property. No edge belonging to a component  is located inside the interiors of the cycles belonging to this component (if they exist). Thus we can uniquely define the branch of $\sqrt{-{R(z)}/{P(z)}}$ in each connected component of $\bC\setminus K_\Psi$ so that on the boundary of each simple cycle in $K_\Psi$ the gradient of the logarithmic potential  points inside the cycle. (Recall that the gradient coincides with the complex conjugate of the chosen branch of $\sqrt{-{R(z)}/{P(z)}}$ and  is orthogonal to the edges of $K_\Psi$ at each point except the vertices.)  Since no simple cycles belonging to  the same connected component of $K_\Psi$ lie within each other this choice is unique and well-defined. It leads to the positive measure supported on the complement to the union of all simple cycles of $K_\Psi$. 
\end{proof}



Let us finish the paper with the following important observation. 

\begin{proposition}\label{prop:density}
For any monic $P(z)$ of degree $n+2$ the set of all polynomials $R(z)$ of degree $n$ and with leading coefficient $-1$ such that the differential $\frac{R(z)dz^2}{P(z)}$ is Strebel is dense in the space of all polynomials of degree $n$  with leading coefficient $-1$. In fact,  this set  is the countable union of real semi-analytic varieties of positive codimension. 
\end{proposition}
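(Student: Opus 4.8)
The plan is to parametrize Strebel differentials $\frac{R(z)dz^2}{P(z)}$ by their *period data* and show that the period map is a local diffeomorphism onto a full-dimensional set, so that Strebel loci are cut out by rationality/reality conditions on finitely many periods. Fix the monic $P(z)$ of degree $n+2$ and consider the $n$-dimensional affine space $\mathcal R$ of polynomials $R(z)$ with leading coefficient $-1$ and $\deg R = n$. For generic such $R$, the discriminant has $2n+2$ distinct zeros, so the curve $w^2 = -R(z)/P(z)$ defines a hyperelliptic Riemann surface $Y_R$ of genus $g$, and $\sqrt{-R(z)/P(z)}\,dz$ is a well-defined meromorphic differential $\omega_R$ on $Y_R$ with double poles over the zeros of $P$ and over $\infty$. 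The quadratic differential $\Psi = R(z)dz^2/P(z) = (\omega_R)^{\otimes 2}$ lies in the class $\M$ of Lemma~\ref{lm:quadrpoles} precisely because $R$ has negative leading coefficient (negativity of the leading coefficient at $\infty$) and because the double poles at zeros of $P$ have leading coefficient $R(z_0)/P'(z_0)^2$-type expression whose sign can be arranged — here one must either restrict to the open dense subset of $\mathcal R$ where all these leading coefficients are negative, or invoke that Strebel requires it and so the Strebel locus already sits inside that subset.

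The key structural input is the classical theorem (Strebel, Ch.~3 of \cite{Str}; see also Jenkins–Strebel theory) that a quadratic differential in $\M$ on $\bCP^1$ with prescribed double poles and prescribed *negative* leading coefficients $-c_1^2,\dots,-c_m^2$ (one per double pole, including $\infty$) is Strebel if and only if it is the unique Jenkins–Strebel differential realizing those $c_j$ as the circumferences of the characteristic ring domains; and such a differential exists and is unique for *every* admissible choice of the $c_j > 0$. Thus the set of Strebel differentials with poles exactly at the zeros of $P$ and at $\infty$ is parametrized by the positive orthant of circumference data $(c_1,\dots,c_m)$ together with the combinatorial type of the graph $K_\Psi$ — a locally finite (in fact countable) union of pieces. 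The map sending circumference data plus combinatorial type to the coefficient vector $R \in \mathcal R$ is real-analytic, and its image is locally a real-analytic variety; one computes its dimension by a Teichmüller-type count. I would carry this out as follows: (1) set up $Y_R$, $\omega_R$, and identify the period coordinates (the $2g$ periods of $\omega_R$ over a symplectic basis, plus the residue-type quantities at the poles); (2) observe that $\omega_R$ has *purely imaginary* periods over every cycle avoiding the poles exactly when $\Psi$ is Strebel with the given data, because closedness of almost all trajectories forces $\Im \int \omega_R = $ single-valued, i.e. all periods of $\Re(\omega_R\,dz)$-type vanish; (3) deduce that the Strebel condition is the vanishing of the real parts of a collection of period integrals, which are real-analytic functions of $R \in \mathcal R$, hence each combinatorial stratum of the Strebel locus is a real-analytic variety; (4) count: the complex dimension of the family of pairs $(Y_R,\omega_R)$ as $R$ ranges over $\mathcal R$ is $n$, the Strebel condition imposes vanishing of $\sim g$ real periods (one real condition per independent cycle, with $2g+(\text{pole count}-1)$ total period parameters of which half become the free circumferences), so the stratum has real codimension at least one — giving "positive codimension" — and summing the countably many combinatorial types gives a countable union.

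For density, the idea is a perturbation/deformation argument: given any $R_0 \in \mathcal R$ (say with distinct branch points), pick any admissible Strebel graph on $\bCP^1$ with a vertex set near the zeros of the discriminant of $R_0$ and any positive circumference data; the Jenkins–Strebel existence theorem produces a genuine Strebel differential, and as the circumferences and graph-edge-lengths vary over an open set the resulting $R$ sweeps out a set accumulating at $R_0$. More cleanly: the period map $R \mapsto (\text{real parts of periods of }\omega_R)$ from $\mathcal R \cong \bR^n$ to $\bR^g$ is real-analytic and, by a standard argument, submersive on a dense open set (its differential is computed via variations of $R$, which span because $\{z^j dz/\sqrt{\text{disc}}\}$ are linearly independent holomorphic-type differentials on $Y_R$); therefore the zero set of each coordinate combination is a nonempty real-analytic hypersurface, and the countable union of Strebel strata — being the preimage of a dense (indeed, full-measure in the appropriate sense) subset of circumference space — is dense. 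The main obstacle I anticipate is step (2)–(3): precisely matching "almost all trajectories closed" with an algebraic/period-theoretic condition on $\omega_R$, including the careful treatment of the double poles (where trajectories spiral and the "circumference" is the residue of $\omega_R^2$, which must be real and the leading coefficient negative) and ensuring the combinatorial stratification is genuinely countable and each stratum genuinely real-semi-analytic rather than something wilder. Handling the non-generic $R$ with collided branch points (lower-genus degenerations) is a secondary technical nuisance but does not affect density. I would lean on \cite{Str} and the surrounding Jenkins–Strebel literature for the existence/uniqueness and continuous-dependence statements rather than reproving them.
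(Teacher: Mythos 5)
First, a point of comparison: the paper states Proposition~\ref{prop:density} without any proof --- it is offered as an ``important observation'', presumably resting on \cite{Str}, \cite{STT}, \cite{Bar} and \cite{MFR1} --- so your attempt cannot be measured against an argument in the text and has to stand on its own. Your overall architecture (stratify the Strebel locus by the combinatorial/isotopy type of the critical graph $K_\Psi$; show each stratum is cut out by reality conditions on periods of $\sqrt{R/P}\,dz$, hence is real-analytic of positive codimension; get density from Jenkins--Strebel existence theory) is the right one, and the period-theoretic reformulation in your steps (2)--(3) is sound in spirit.

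There are, however, two concrete gaps. First, you have the singularity structure wrong: since $\deg R=n$ and $P$ generically has simple zeros not shared with $R$, the differential $R(z)dz^2/P(z)$ has \emph{simple} poles at the zeros of $P$ and exactly one double pole, at $\infty$, whose leading coefficient is already pinned to $-1$ by the normalization. There is therefore no free vector of circumferences $(c_1,\dots,c_m)$, and the parametrization of the Strebel locus by a positive orthant of circumference data --- together with the dimension count built on it --- collapses. (The correct count comes from the reality conditions on the $2g=2n$ periods of $\omega_R=\sqrt{R/P}\,dz$ on the genus-$n$ curve $y^2=R(z)P(z)$, restricted to the image of the period map.) Second, and more seriously, the density argument does not go through: a countable union of positive-codimension real-analytic varieties is in general nowhere dense, and neither ``the zero set of each period is a nonempty hypersurface'' nor ``preimage of a dense subset of circumference space'' (a positive-codimension set cannot be the preimage of a full-measure set under a submersion) yields density. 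Density is the genuinely hard content here, a Douady--Hubbard-type theorem. One must either (a) apply the Jenkins--Strebel existence and uniqueness theorem to each of the countably many isotopy classes of admissible systems of ring-domain core curves on $\bCP^1$ punctured at the zeros of $P$ and at $\infty$, and then show by a continuity/limiting argument that the resulting countable family accumulates at every $R_0$; or (b) exploit that the relative periods are multivalued in $R$, with branches differing by the lattice generated by the absolute periods, and show that the union over all branches of the reality loci is dense. You invoke neither mechanism: your submersivity remark gives only nonemptiness of individual strata, not their accumulation at an arbitrary $R_0$. Until that is supplied, the ``dense'' half of the proposition remains unproved.
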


This circumstance illustrates  the difficulty of the general problem to determine for which algebraic equations a real motherbody measure exists. 

\section {Final remarks}

\noindent 
{\bf 1.} The natural question about which algebraic functions of degree bigger than $2$ whose Newton polygon intersects the diagonal in the $(C,z)$-plane nontrivially admit a real motherbody measure is hard to answer.  Some steps in this direction can be found in \cite{HS1}. This topic is apparently closely related to the (non-existing) notion of  Strebel differential of higher order which we hope to develop in the future. In any case, it is clear that no results similar to Theorem~\ref{th:main} are possible and one needs to impose highly non-trivial additional  conditions on such functions  to ensure the existence of a probability measure.

\medskip
\noindent 
{\bf 2.}  {\rm Problem.} Given a finite set $S$ of monomials satisfying the assumptions of Lemma~\ref{lm:ProbBr} and Lemma~\ref{lm:irred}, consider the linear space $Pol_S$ of all polynomials $P(\C,z)$ whose Newton polygon is contained in $S$.  What is the (Hausdorff) dimension of the subset $MPol_S\subseteq Pol_S$ of  polynomials admitting a motherbody measure?  
\medskip 




\end{document}